\numberwithin{equation}{section}
\patchcmd{\ttlh@hang}{\parindent\z@}{\parindent\z@\leavevmode}{}{}
\patchcmd{\ttlh@hang}{\noindent}{}{}{}
\newcommand\numberthis{\addtocounter{equation}{1}\tag{\theequation}}
\theoremstyle{plain}
\newtheorem{theorem}{Theorem}[section]
\newtheorem{lemma}[theorem]{Lemma}
\newtheorem{proposition}[theorem]{Proposition}
\theoremstyle{definition}
\newenvironment{example}
  {\pushQED{\qed}\examplex}
  {\popQED\endexamplex}
\theoremstyle{remark}
\newtheorem{remark}[theorem]{Remark}
\DeclareMathOperator*{\dom}{dom}
\DeclareMathOperator*{\co}{co}
\DeclareMathOperator*{\vol}{vol}
\DeclareMathOperator*{\Span}{span}
\DeclareMathOperator*{\supp}{supp}
\newcommand{\per}{\mathcal{A}}
\newcommand{\Hpi}{\mathcal{H}_{\pi}}
\newcommand{\Bpi}{\mathcal{B}_{\pi}}
\newcommand{\CS}{\pi (\Gamma) g}
\newcommand{\repkw}{k^{(\alpha)}_w}
\newcommand{\specv}{h^{(\alpha)}_n}
\newcommand{\ip}[2]{\ensuremath{\left<#1,#2\right>}}
\newcommand{\abs}[1]{\ensuremath{\left| #1 \right| }}
\newcommand{\norm}[1]{\lVert#1\rVert}
\newcommand{\Bignorm}[1]{\Big\lVert#1\Big\rVert}
\newcommand{\bignorm}[1]{\big\lVert#1\big\rVert}
\newcommand{\newpi}{\bigoplus_{n \in \mathbb{N}} \pi}
\newcommand{\newH}{\mathcal{H}}
\newcommand{\sotconv}{\xrightarrow{\textit{SOT}}}
\newcommand{\weakconv}{\xrightarrow{w}}
\newcommand{\newg}{\mathrm{m}}
\newcommand*{\QED}{\hfill\ensuremath{\square}}
\title[The density theorem]{The density theorem for discrete series representations restricted to lattices}
\author{Jos\'{e} Luis Romero}
\address{Faculty of Mathematics,
University of Vienna,
Oskar-Morgenstern-Platz 1,
A-1090 Vienna, Austria\\and
Acoustics Research Institute, Austrian Academy of Sciences,
Wohllebengasse 12-14 A-1040, Vienna, Austria}
\email{jose.luis.romero@univie.ac.at}
\author{Jordy Timo van Velthoven}
\address{Delft University of Technology,
Faculty EECMS/DIAM,
Mekelweg 4, Building 36,
2628 CD Delft, The Netherlands.}
\email{j.t.vanvelthoven@tudelft.nl}
\date{}
\subjclass[2010]{22D10, 22D25, 22E40, 42C15, 42C30, 42C40}
\date{}
\keywords{Cyclic vector, Density condition, Discrete series representation, Existence results, Frame, Lattice subgroup, Riesz sequence, Von Neumann algebra}
 \thanks{
 J.~L.~R and J.~v.~V.~gratefully acknowledge support from the Austrian Science Fund (FWF):
 Y 1199, J 4445
 and
 P 29462 - N35.}
\begin{document}

\maketitle

\begin{abstract}
This article considers the relation between
the spanning properties of lattice orbits
of discrete series representations and the associated lattice co-volume.
The focus is on the density theorem,
which provides a trichotomy characterizing
the existence of cyclic vectors and separating vectors,
and frames and Riesz sequences.
We provide an elementary exposition of the density theorem, that is based solely
on basic tools from harmonic analysis, representation theory, and frame theory,
and put the results into context by means of examples.
\end{abstract}

{
\hypersetup{
    linktoc=none}
  \tableofcontents
}

\section{Introduction}
Let $G$ be a second countable locally compact group and let $(\pi, \Hpi)$
be an irreducible, square-integrable unitary representation of
$G$, a so-called \emph{discrete series representation}.
For a lattice $\Gamma \subset G$,
we consider the relation between
certain spanning properties of lattice orbits of $\pi$
under a vector $g \in \Hpi$,
\begin{align} \label{eq:orbit_intro}
\pi (\Gamma) g = \big\{ \pi (\gamma) g  :  \gamma \in \Gamma \big\},
\end{align}
and the \emph{lattice co-volume} $\vol(G/\Gamma)$ of $\Gamma$,
i.e., the volume of a fundamental domain of $\Gamma$.
The spanning properties that we consider are the existence of
cyclic, separating, frame  and Riesz vectors;
see Section \ref{sec:vectors_frames} for the precise definitions.

The notions of cyclic and separating vectors occur primarily in the theory of operator algebras,
in particular, von Neumann algebras, and they provide (if they exist)
a powerful tool in
 studying the structure of these algebras.
The stronger notions of
frames and Riesz sequences, on the other hand, form the core of Gabor and wavelet theory,
and are important in applications as they guarantee unconditionally convergent
and stable Hilbert space expansions.

The central theorem relating the spanning properties of systems \eqref{eq:orbit_intro}
and the corresponding lattice co-volume is referred to as the \emph{density theorem}.
Under the assumption that the lattice $\Gamma$ is an infinite conjugacy class (ICC) group,
i.e., any conjugacy class $ \{\gamma \gamma_0 \gamma^{-1} \; | \; \gamma \in \Gamma \}$ for $\gamma_0 \in \Gamma \setminus \{e\}$ has infinite cardinality,
the density theorem provides the following trichotomy:

\begin{theorem} \label{thm:intro_ICC}
Let $(\pi, \Hpi)$ be a discrete series representation of
a second countable unimodular group $G$ of formal dimension $d_{\pi} > 0$.
Suppose $\Gamma \subset G$ is an ICC lattice.
Then the following assertions hold:
\begin{enumerate}[(i)]
\item If $\vol(G / \Gamma) d_{\pi} < 1$, then $\pi|_{\Gamma}$ admits a Parseval frame,
but neither a separating vector, nor a Riesz sequence;
\item If $\vol(G / \Gamma) d_{\pi} = 1$, then $\pi|_{\Gamma}$ admits an orthonormal basis;
\item If $\vol(G / \Gamma) d_{\pi} > 1$, then $\pi|_{\Gamma}$ admits an orthonormal system,
but not a cyclic vector.
\end{enumerate}
(While $d_{\pi}$ and $\vol(G/\Gamma)$ depend on the normalization of the Haar measure on $G$, their product $\vol(G/\Gamma) d_{\pi}$ does not.)
\end{theorem}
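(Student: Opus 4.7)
The plan is to view the problem through the group von Neumann algebra $M := \pi(\Gamma)''$ acting on $\Hpi$. Because $\Gamma$ is ICC, its group von Neumann algebra $\vN(\Gamma)$ is a type $\mathrm{II}_1$ factor with unique normal tracial state $\tau(\lambda(\gamma)) = \delta_{\gamma,e}$. The assignment $\lambda(\gamma) \mapsto \pi(\gamma)$ extends to a normal $\ast$-isomorphism $\vN(\Gamma) \to M$ (the kernel being a weakly closed two-sided ideal, hence trivial in a factor), so $M$ is itself a $\mathrm{II}_1$ factor. The trichotomy then becomes a reflection of the Murray--von Neumann classification of separable Hilbert $M$-modules by a single numerical invariant, their $M$-dimension (coupling constant) $\dim_M \Hpi \in (0, \infty]$.

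The central technical step is to establish
\[
\dim_M \Hpi = d_\pi \cdot \vol(G/\Gamma).
\]
The coefficient transform $V_g h(x) := \langle h, \pi(x) g \rangle$ together with the orthogonality relations $\int_G |V_g h(x)|^2 \, dx = d_\pi^{-1} \|g\|^2 \|h\|^2$ provide the bridge between the Haar measure on $G$ and the canonical trace on $\vN(\Gamma)$. I would pick a fundamental domain $F$ for $\Gamma$, write $\int_G = \sum_{\gamma \in \Gamma} \int_{\gamma F}$, and use a Weil-type periodization to evaluate $\tau$ on the projection onto $\overline{Mg}$ for a carefully chosen $g$ (a smooth, sufficiently concentrated matrix-coefficient vector). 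The resulting computation identifies the coupling constant as $d_\pi \vol(G/\Gamma)$.

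Once this is in hand, the trichotomy follows from the standard dictionary between spanning properties and $M$-module structure. Encode each spanning property via the analysis operator $C_g : \Hpi \to \ell^2(\Gamma)$, $C_g h := (\langle h, \pi(\gamma) g \rangle)_{\gamma \in \Gamma}$, which intertwines $\pi|_\Gamma$ with the right regular representation of $\Gamma$. Then $g$ is cyclic iff $C_g$ has dense range, separating iff $C_g^\ast$ has dense range, a Parseval frame iff $C_g$ is an isometry into $\ell^2(\Gamma)$, and an orthonormal system iff $C_g^\ast$ extends to a co-isometry. Murray--von Neumann dimension theory furnishes a $\vN(\Gamma)$-equivariant isometry $\Hpi \hookrightarrow \ell^2(\Gamma)$ precisely when $\dim_M \Hpi \leq 1$, and an isometry $\ell^2(\Gamma) \hookrightarrow \Hpi$ precisely when $\dim_M \Hpi \geq 1$. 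Pulling back (resp.\ pushing forward) the canonical generator $\delta_e$ under such an isometry yields: case (i), a Parseval frame when $d_\pi \vol(G/\Gamma) < 1$, while separating vectors (and hence Riesz sequences) are excluded by $\dim_M \Hpi < 1$; case (ii), an orthonormal basis at equality; and case (iii), an orthonormal system when $d_\pi \vol(G/\Gamma) > 1$, while cyclicity is excluded by $\dim_M \Hpi > 1$.

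The main obstacle is the precise computation of the coupling constant. The orthogonality relations are stated as integrals over $G$, not sums over $\Gamma$, so the passage to the lattice demands a sampling/periodization argument that is uniform in the vectors used. In parallel, one must first exhibit enough Bessel vectors---vectors $g \in \Hpi$ for which $C_g$ is bounded---to feed into the machinery above; for a general second countable unimodular $G$ this requires smoothing by a compactly supported approximate identity and careful use of the square-integrability of $\pi$. I expect the remaining arguments, although conceptually clean, to split into several bookkeeping lemmas handling these normalization and continuity issues, and it is on these technical steps that the ``elementary'' character of the exposition should hinge.
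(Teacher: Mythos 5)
Your plan is correct in outline, but it is not the paper's route: it is precisely the classical coupling-theory argument (Atiyah--Schmid, Goodman--de la Harpe--Jones, Bekka) that the paper cites as the ``common approach'' and is expressly written to avoid. The paper never introduces the trace on $\vN(\Gamma)$ or the dimension $\dim_M \Hpi$. Instead it (a) derives the necessary density conditions by periodizing the orthogonality relations \eqref{eq:ortho_rel} over a fundamental domain and feeding the result an auxiliary Parseval frame or orthonormal sequence (Proposition \ref{prop:density_quasi} and Theorem \ref{thm:necessary_density}, Janssen's ``classroom proof'' idea); (b) proves an existence dichotomy --- a cyclic vector or a separating vector always exists --- by a Zorn's lemma maximality argument plus the absence of doubly invariant subspaces (Propositions \ref{prop_no_double} and \ref{prop_either}), which in turn rests on the $\ell^2$-expansion Theorem \ref{th_series}, proved with the Alaoglu--Birkhoff minimal method where you would use the trace; and (c) upgrades cyclic to Parseval frame and separating to orthonormal sequence via polar decompositions of the closed intertwiners $C_{g,\Gamma}$ and $D_{g,\Gamma}$ (Propositions \ref{prop:cyclic_tight-frame} and \ref{prop:separating_riesz}), with Proposition \ref{prop:critical_density} handling the critical case. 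Your route buys the full Murray--von Neumann comparison theory, from which the trichotomy is an immediate dictionary lookup, and your central formula $\dim_M \Hpi = d_\pi \vol(G/\Gamma)$ is a genuine known theorem; but its standard proof contains essentially the same fundamental-domain periodization the paper performs directly, so executing your plan in full reproduces the paper's computational core plus the $\mathrm{II}_1$ overhead. The paper's route buys self-containedness and extends painlessly to projective representations under Kleppner's condition, which is strictly weaker than ICC.

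Two points in your sketch need repair. First, the normal $*$-isomorphism $\vN(\Gamma) \to \pi(\Gamma)''$ is not automatic: a normal extension of $\lambda(\gamma) \mapsto \pi(\gamma)$ exists only when $\pi|_\Gamma$ is quasi-contained in a multiple of the regular representation (for the trivial representation of a nonamenable ICC group no such extension exists, so your ``kernel is a weakly closed ideal'' argument presupposes exactly what must be shown). Here quasi-containment does hold, but establishing it is where square-integrability enters --- via the density of Bessel vectors (the unnumbered lemma opening Section \ref{sec:discreteseries}) and the equivariant isometries obtained from the polar decomposition of $C_{h,\Gamma}$, i.e., the content of Propositions \ref{prop:intertwiner} and \ref{prop:cyclic_tight-frame}; you flag the Bessel-vector issue but do not connect it to this step. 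Second, several entries of your dictionary are transposed: $g$ is cyclic iff $C_g$ is \emph{injective} (equivalently, $C_g^*$ has dense range), not iff $C_g$ has dense range; dense range of $C_g$, i.e., injectivity of the synthesis operator, is what corresponds (for Bessel $g$, under ICC, via Theorem \ref{th_series} and the argument of Proposition \ref{prop_riesz_sep}) to $g$ being separating; and $\pi(\Gamma)g$ is orthonormal iff $C_g^*$ is an \emph{isometry} --- your ``$C_g^*$ is a co-isometry'' instead characterizes Parseval frames, duplicating your (correct) entry for those. These slips do not derail the strategy, since the module-dimension comparisons you invoke downstream are the right ones, but the dictionary must be corrected before the dimension count delivers the stated conclusions.
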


The density theorem characterizes the spanning properties of the lattice orbits \eqref{eq:orbit_intro} in terms of the lattice co-volume or its reciprocal,
often called the \emph{density} of the lattice.
In the setting of a general unimodular group,
the assumption that the lattice is ICC is essential and cannot be omitted ---see Example \ref{sec:SL_ICC} below---
although a more general version of Theorem \ref{thm:intro_ICC}
for possibly non-ICC lattices was obtained by Bekka \cite{bekka2004square}.
The existence claims in Theorem \ref{thm:intro_ICC} are not accompanied
by constructions of explicit vectors.

The criteria for the existence of cyclic and separating vectors in Theorem \ref{thm:intro_ICC} are well-known to be consequences of the general theory underlying the so-called \emph{Atiyah-Schmid formula}
\cite{atiyah1976elliptic, atiyah1977geometric, goodman1989coxeter},
and, for certain classes of representations, also a consequence of Rieffel's work \cite{rieffel1981von, rieffel1988projective}.
The stronger statements on the existence of Parseval
frames (part (i)) and orthonormal bases (part (ii)) can also be obtained
by similar techniques as shown by
Bekka \cite{bekka2000square, bekka2004square}.
The statement on orthonormal systems (part (iii)) does not seem to have explicitly occurred in the literature before.

While the interest in the density theorem is broad and manifold, as it
 encompasses operator algebras, representation theory, mathematical physics, and Gabor and wavelet analysis,
 the available proofs rely on advanced theory of von Neumann algebras, and may only be accessible to a smaller community of experts. This expository article provides an elementary and self-contained presentation of the density theorem, that is based solely on basic tools from harmonic analysis, representation theory, and frame theory, and should be accessible to an interested non-expert. While almost all methods employed exist in some antecedent form in the different specialized literatures, their particular combination here
makes the basic structure underlying the density theorem transparent;
see Section \ref{sec:technical_overview}. The elementary arguments in this article
fall, however, short of deriving the more general version of Theorem \ref{thm:intro_ICC} by Bekka \cite{bekka2004square}. We hope that this article motivates the non-specialist to delve deeper into operator-algebraic methods.
We also expect that the concrete exposition contributes to the study of quantitative aspects of Theorem \ref{thm:intro_ICC}, such as the relation
between the distance between $\vol(G / \Gamma) d_{\pi}$ and the critical value $1$, and special qualities of the corresponding cyclic or separating vectors,
such as smoothness in the case of Lie groups.

\subsection{Context and related work}
In the setting of Theorem \ref{thm:intro_ICC}, for any non-zero $g \in \Hpi$, the system $\{ \pi (x) g  :  x \in G \}$
is \emph{overcomplete},
i.e., it contains proper subsystems that are still complete.
The fundamental question as to whether subsystems corresponding to lattices \eqref{eq:orbit_intro}
remain complete was posed by Perelomov
in his group-theoretical approach towards the construction
of coherent states \cite{perelomov1972coherent, perelomov1986generalized}. In fact, a criterion for the completeness of subsystems of coherent states similar to Theorem \ref{thm:intro_ICC}
was posed as a question in \cite[p.226]{perelomov1972coherent} \footnote{
Perelomov uses the term coherent state with a slightly different meaning,
as systems are not the full orbit of a group representation, but
parametrized by a homogeneous space to eliminate redundancies. See
\cite{moscovivi1977coherent,moscovici1978coherent} for the relation between the two notions.}.
These criteria have been considered for specific systems and vectors in, e.g., \cite{bargmann1971on, perelomov1971remark, perelomov1973coherent,
klauder1994wavelets, neretin2006perelomov, ramanathan1995incompletness, monastyrsky1974coherent, groechenig2016completeness}.

The related question as to whether a system \eqref{eq:orbit_intro} is a (discrete) frame is at the core of modern frame theory \cite{daubechies1986painless}
and has, in particular, a long history in Gabor theory \cite{heil2007history}.
The existence of a frame vector is also studied in representation theory,
in whose jargon such a vector is called \emph{admissible} \cite{fuehr2005abstract, fuehr2002admissible}.
While the mere existence of a frame or Riesz vector for a given lattice is quite different from the validity of these properties for one specific vector, there is an interesting interplay between the two problems. In Section \ref{sec:examples} we discuss a selection of examples, including one where Theorem \ref{thm:intro_ICC} yields seemingly unnoticed consequences.

\subsection{Projective versions}
The density theorem can also be formulated for \emph{projective} unitary representations
\cite{bekka2004square, radulescu1998berezin, han2017note, gabardo2003frame},
and allows for applications to representations that are square-integrable only modulo a central subgroup
(as in the case of nilpotent or reductive Lie groups).
The proofs that we present work transparently for projective representations
and we formulate the main results in that generality in Theorem \ref{th_main}.
In the projective setting, the lattice is not assumed to be ICC,
but is assumed to satisfy the weaker \emph{Kleppner condition} \cite{kleppner1962structure}, a compatibility condition between the lattice and the cocycle of the projective representation. The projective formulation greatly simplifies the treatment of concrete examples such as weighted Bergman spaces and Gabor systems in Section \ref{sec:examples}.

\subsection{Technical comments} \label{sec:technical_overview}
The common approach to the density theorem is through the
coupling theory of von Neumann algebras, and a self-contained
presentation in this spirit can be found in \cite{goodman1989coxeter,bekka2000square}. Although we make no explicit reference to the coupling theory, some of the arguments we give are simplifications of standard results, as we point out throughout the text. Most significantly, we circumvent certain technicalities associated with the so-called trace of a group von Neumann algebra.
In finding elementary arguments, we benefited particularly from reading \cite{cowling1991irreducibility, alaoglu1940ergodic, kuhn1992restrictions, fuehr2005abstract, rieffel2004integrable}.

An important simplification in the proof of Theorem \ref{thm:intro_ICC} occurs in the derivation of the necessity of the volume or density conditions for cyclicity and separateness, which also play an essential role in deriving the existence of frame and Riesz vectors. Our argument is inspired by Janssen's ``classroom proof''
of the density theorem for Gabor frames \cite{janssen_density}, and underscores the power of
frame-theoretic methods. In this article such argument is pushed further to yield consequences for cyclicity and separateness. While the necessity of the density conditions for frames and Riesz sequences is an active field of research \cite{balan2006density, fuehr2017density, mitkovski1}, most abstract results are not applicable to groups of non-polynomial growth. It is therefore remarkable that the
particular lattice structure of the systems in question \eqref{eq:orbit_intro} leads to simple and conclusive results.

\section{Preliminaries} \label{sec:prelim}
Throughout the article, the locally compact group $G$ is
assumed to be second countable and unimodular. We fix a Haar measure $\mu_G$ on $G$. Some of the notions below depend on this normalization, but the main results do not.

\subsection{Cocycles and projective representations}
A \emph{cocycle} or \emph{multiplier} on $G$ is a Borel measurable function
$\sigma : G \times G \to \mathbb{T}$ such that
\begin{enumerate}[(i)]
\item For all $x,y,z \in G$, $\sigma(x, yz) \sigma(y,z) = \sigma(xy,z) \sigma(x,y);$
\item For the identity $e \in G$ and all $x \in G$, $\sigma(x,e) = \sigma(e,x) = 1$.
\end{enumerate}
A \emph{projective unitary representation} $(\pi, \Hpi)$
of $G$ on a Hilbert space $\Hpi$ is a mapping $\pi : G \to \mathcal{U} (\Hpi)$
satisfying the following conditions:
\begin{enumerate}[(i)]
\item[(i)] The map $x \mapsto \pi(x)$ is weakly measurable, i.e.,
the map $G \ni x \mapsto \langle \pi(x) f, g \rangle \in \mathbb{C}$ is
Borel for all $f, g \in \Hpi$;
\item[(ii)] There exists a function $\sigma : G \times G \to \mathbb{T}$ such that
$\pi(x) \pi(y) = \sigma(x,y) \pi (xy)$ for all $x, y \in G$;
\item[(iii)] $\pi (e) = I$.
\end{enumerate}
In this case, the map $\sigma$ in (ii) is uniquely determined and it is a cocycle.
A projective unitary representation with cocycle $\sigma$ is called a
\emph{$\sigma$-representation}.

Common examples of a representation space $\Hpi$ are Hilbert spaces of real-variable or complex-variable functions; see Section \ref{sec:examples} for a detailed discussion of some examples.

Given two $\sigma$-representations $(\pi_1, \mathcal{H}_{\pi_1})$ and
$(\pi_2, \mathcal{H}_{\pi_2})$, a linear operator $T : \mathcal{H}_{\pi_1} \to \mathcal{H}_{\pi_2}$ is said to \emph{intertwine} $\pi_1$ and $\pi_2$ if
\[ T \pi_1 (x) = \pi_2 (x) T,
\qquad \mbox{for all }x \in G.
 \]
If a bounded linear operator $T : \mathcal{H}_{\pi_1} \to \mathcal{H}_{\pi_2}$ intertwines $\pi_1$ and $\pi_2$, then $T^* : \mathcal{H}_{\pi_2} \to \mathcal{H}_{\pi_1}$ interwines $\pi_2$ and $\pi_1$.

See \cite{mackey1958unitary, gaal1973linear, varadarajan1985geometry} for background on cocycles and projective representations.

\subsection{Square-integrable $\sigma$-representations}
Let $(\pi, \Hpi)$ be a $\sigma$-representation of $G$.
For $f, g \in \Hpi$, the associated \emph{matrix coefficient}
is defined by
$
C_{g} f (x) = \langle f, \pi(x) g \rangle$ for $x \in G.
$
The $\sigma$-representation $(\pi, \Hpi)$ is
called \emph{square-integrable} if there exists a norm dense subspace $\mathcal{D} \subset \Hpi$ such that
\begin{align} \label{eq:squareintegrable_dense}
C_g f = \langle f, \pi (\cdot) g \rangle \in L^2 (G),
\quad f \in \Hpi, \; g \in \mathcal{D}.
\end{align}
The $\sigma$-representation $(\lambda_G^{\sigma}, L^2 (G))$ given by
\begin{align*}
(\lambda_G^{\sigma} (y) F)(x) = \sigma(y, y^{-1} x) F(y^{-1} x), \qquad F \in L^2 (G),\,
x,y \in G,
\end{align*}
is called the \emph{$\sigma$-regular representation} and satisfies the \emph{covariance property} or \emph{intertwining property}:
\begin{align} \label{eq:covariance}
C_g (\pi(y) f)(x) = \sigma(y, y^{-1} x) C_g f (y^{-1} x) = \big( \lambda_G^{\sigma} (y) C_g f\big) (x),
\quad x,y \in G,
\end{align}
 for all  $f \in \Hpi$, $g \in \mathcal{D}$.

A $\sigma$-representation $(\pi, \Hpi)$ is called \emph{irreducible} if
the only closed $\pi(G)$-invariant subspaces of $\Hpi$ are $\{0\}$ and $\Hpi$
and  is said to be a \emph{discrete series $\sigma$-representation} if it is both
square-integrable and irreducible.

Given a discrete series $\sigma$-representation $(\pi, \Hpi)$,
there exists a unique number $d_{\pi} > 0$, called the \emph{formal dimension} of $\pi$, such that the \emph{orthogonality relations}
\begin{align} \label{eq:ortho_rel}
\int_G \langle \pi (x) f_1, g_1 \rangle \overline{\langle \pi(x) f_2, g_2 \rangle} \; d\mu_G (x)
= d_{\pi}^{-1} \langle f_1, f_2 \rangle \overline{\langle g_1, g_2 \rangle}
\end{align}
hold for all $f_1, f_2, g_1, g_2 \in \Hpi$.

The formal dimension $d_{\pi} > 0$ depends on the choice of Haar measure on $G$, and in certain concrete settings, such as real Lie groups, it can be explicitly computed. The book \cite{corwin1990representations} treats nilpotent Lie groups while \cite{neeb2000holomorphy, knapp1986representation} treats
semisimple Lie groups. Explicit expressions of $d_{\pi}$ for the simplest examples of such groups are also provided in Section \ref{sec:examples}.

See \cite{radulescu1998berezin, robert1983introduction} and \cite[Appendix VII]{neeb2000holomorphy} for more on square-integrable representations.

\subsection{Fundamental domains and lattices}
Let $\Gamma \subseteq G$
be a discrete subgroup.
A left (resp. right) fundamental domain of $\Gamma$ in $G$ is a Borel set $\Omega \subseteq G$
satisfying $G = \Gamma \cdot \Omega$
and $\gamma \Omega \cap \gamma' \Omega  = \emptyset$
(resp. $G = \Omega \cdot \Gamma$ and $\Omega \gamma \cap \Omega \gamma' = \emptyset$)
 for all $\gamma, \gamma' \in \Gamma$ with $\gamma \neq \gamma'$.
 If $\Omega$ is a left (resp. right) fundamental domain, then $\Omega^{-1}$ is a right (resp. left)
 fundamental domain.
 The discrete subgroup $\Gamma \subseteq G$ is called a \emph{lattice}
if it admits a left (or right) fundamental domain of finite measure. Equivalently, a discrete subgroup
$\Gamma$ is a lattice if and only if the quotient $G / \Gamma$ admits a finite $G$-invariant regular Borel measure.
Any two fundamental domains have the same measure, and thus, we may define the \emph{co-volume} of $\Gamma$ as $\vol(G/\Gamma) := \mu_G (\Omega)$.
This depends of course on the choice of the Haar measure for $G$.

Standard examples of lattices are $\mathbb{Z}^d \subseteq \mathbb{R}^d$ and $ \mathrm{SL}(2, \mathbb{Z}) \subseteq \mathrm{SL}(2, \mathbb{R})$. The lattice $\mathbb{Z}^d$ is co-compact in $\mathbb{R}^d$, i.e., $\mathbb{R}^d/\mathbb{Z}^d$ is compact, while $ \mathrm{SL}(2, \mathbb{Z})$ is not co-compact in $\mathrm{SL}(2, \mathbb{R})$.

See \cite{raghunathan1972discrete} and \cite[Appendix B]{bekka2008kazhdan} for more on lattices and fundamental domains.

\subsection{ICC groups and Kleppner's condition} \label{sec:kleppner}
Let $\Gamma$ be a discrete countable group and let $\sigma : \Gamma \times \Gamma \to \mathbb{T}$ be a cocycle.
An element $\gamma_0 \in \Gamma$ satisfying
$\sigma(\gamma_0, \gamma) = \sigma(\gamma, \gamma_0)$ for all elements $\gamma \in \Gamma$
commuting with $\gamma_0$ is called \emph{$\sigma$-regular}.
The pair $(\Gamma, \sigma)$ is said to satisfy \emph{Kleppner's condition} if
the conjugacy class $C_{\gamma_0} := \{ \gamma \gamma_0 \gamma^{-1} \; | \; \gamma \in \Gamma\}$ of any $\sigma$-regular element $\gamma_0 \in \Gamma \setminus \{e\}$ is infinite.
The group $\Gamma$ is called an \emph{infinite conjugacy class} (ICC) group if any conjugacy class $C_{\gamma_0}$ for $\gamma_0 \in \Gamma \setminus \{e\}$ is infinite. Any ICC group $\Gamma$ satisfies Kleppner's condition for any cocycle $\sigma : G \times G \to \mathbb{T}$.

\subsection{Von Neumann algebras}
\label{sec_vn}
Let $\mathcal{H}$ be a separable complex Hilbert space. A net $(T_{\alpha} )_{\alpha \in \Lambda}$ of bounded linear operators $T_\alpha \in \mathcal{B}(\mathcal{H})$ converges in the strong operator topology (SOT) to an operator $T \in \mathcal{B}(\mathcal{H})$ if $T_\alpha f \longrightarrow Tf$ in the norm of $\mathcal{H}$ for all $f \in \mathcal{H}$, and it converges in the weak operator topology (WOT) if $\ip{T_\alpha f}{g} \longrightarrow \ip{Tf}{g}$ for all $f,g \in \mathcal{H}$.

A subalgebra $\mathcal{A} \subseteq \mathcal{B}(\mathcal{H})$ is called a \emph{von Neumann algebra}
if $\mathcal{A}$ is self-adjoint, i.e., $\mathcal{A}=\mathcal{A}^*$, contains the identity $I$ and is weakly closed in $\mathcal{B}(\mathcal{H})$.
The \emph{commutant} $M'$ of a set $M \subseteq \mathcal{B}(\mathcal{H})$ is the class of all bounded linear operators that commute with each operator of $M$, i.e.,
\[
M' := \{ T \in \mathcal{B}(\mathcal{H}) \; : \; TS = ST, \; \forall S \in M \}.
\]
By von Neumann's density theorem (see, e.g., \cite[I.3.4, Corollary 1]{dixmier1981vonneumann}), it follows that if $\mathcal{A} \subseteq \mathcal{B}(\mathcal{H})$ is a self-adjoint algebra containing the identity, then $\mathcal{A}'' := (\mathcal{A}')'$ is contained
in the strong closure of $\mathcal{A}$ in $\mathcal{B}(\mathcal{H})$.
In particular, the double commutant $\mathcal{A}''$ is the smallest von Neumann algebra containing $\mathcal{A}$ and equals the strong and weak closure of $\mathcal{A}$.
Thus, for every operator $T \in \mathcal{A}''$, there exist a net of operators of $\mathcal{A}$ converging to $T$ in the SOT topology. Moreover, by Kaplansky's density theorem (see, e.g., \cite[I.3.5, Theorem 3]{dixmier1981vonneumann}), the net may be assumed to be uniformly bounded in operator norm.

For a family of operators $\mathcal{A} \subset \mathcal{H}$ and a vector $g \in \mathcal{H}$,
 the closed linear span of $\mathcal{A} g = \{ A g \; : \; A \in \mathcal{A}\}$ in $\mathcal{H}$
 is denoted by $[\mathcal{A}g] := \overline{\Span \mathcal{A} g}$.

Given a von Neumann algebra $\mathcal{A} \subseteq \mathcal{B}(\mathcal{H})$ and
an orthogonal projection $P_{\mathcal{K}}$ onto a closed subspace $\mathcal{K} \subseteq \mathcal{H}$, the space $\mathcal{K}$ is invariant under $\mathcal{A}$, i.e., $\mathcal{A}(\mathcal{K}) \subset \mathcal{K}$, if and only if $P_{\mathcal{K}} \in \mathcal{A}'$.
This observation is known as the \emph{projection lemma}.
For more background on von Neumann algebras, see \cite{gaal1973linear, kadison1983fundamentals1, dixmier1981vonneumann}.

\subsection{Partial isometries and the polar decomposition}
Let $\mathcal{H}$ and $\mathcal{K}$ be complex Hilbert spaces.
A bounded linear operator $U : \mathcal{K} \to \mathcal{H}$ is called a \emph{partial isometry}
if $U$ is an isometry when restricted to the orthogonal complement $\mathcal{N}(U)^{\perp}$
of its null space $\mathcal{N}(U)$.  The subspace $\mathcal{N}(U)^{\perp}$ is called the \emph{initial space} of $U$ and the range $\mathcal{R} (U)$ of $U$ is the \emph{final space} of $U$, i.e., the image of $\mathcal{N}(U)^{\perp}$ under the isometry $U|_{\mathcal{N} (U)^{\perp}}$

A linear operator $T : \dom(T) \subset \mathcal{H} \to \mathcal{K}$
is \emph{densely defined} if its domain $\dom(T)$ is a norm dense subspace in $\mathcal{H}$
and is called \emph{closed} if its graph $\mathcal{G}(T) := \{ (f, Tf) \; | \; f \in \mathcal{H} \}$
is closed in $\mathcal{H} \oplus \mathcal{K}$.
For a closed, densely defined linear operator $T : \dom(T) \subset \mathcal{H} \to \mathcal{K}$,
its \emph{adjoint} is denoted by $T^*$ and its \emph{modulus} by $|T| := (T^* T)^{1/2}$.
The operator $|T|$ is defined by Borel functional calculus and has domain $\dom(|T|)=\dom(T)$.
The \emph{polar decomposition} of $T$ is uniquely given by
\[
T = U_T |T| = |T^*| U_T,
\]
where $U_T : \mathcal{H} \to \mathcal{K}$ is a partial isometry
with initial space $\mathcal{N}(T)^{\perp} = \overline{\mathcal{R}(|T|)}$
and final space $\overline{\mathcal{R}(T)}$. For more details and background, see, e.g., \cite[VI, Section 13]{fell1988representations1}.

\section{Orbits of square-integrable representations} \label{sec:vectors_frames}
Let $(\pi, \Hpi)$ be a square-integrable $\sigma$-representation of a countable discrete group $\Gamma$ on a separable (complex) Hilbert space $\Hpi$.
For a vector $g \in \Hpi$, we consider the orbit $\pi(\Gamma) g$ of $g$ under $(\pi, \Hpi)$, i.e.,
\[ \CS := \big\{ \pi (\gamma) g : \gamma \in \Gamma \big\}. \]
We treat the system $\CS$ as a family indexed by $\Gamma$ and
allow for repetitions.

\subsection{Cyclic and separating vectors}
\label{sec_comp_sep}

A vector $g \in \Hpi$ is called \emph{cyclic} or \emph{complete}  if $[\pi(\Gamma) g] = \Hpi$.
By von Neumann's density theorem, the vector $g \in \Hpi$ is cyclic if and only if $[\pi (\Gamma)'' g ] = \Hpi$. A vector $g \in \Hpi$ is called \emph{separating} for $\pi (\Gamma)''$
 if $T \in \pi (\Gamma)''$ and $Tg = 0$ imply $T = 0$, that is, if the map $\pi(\Gamma)'' \ni T \mapsto Tg \in \Hpi$ is injective.

A vector $g \in \Hpi$ is separating for $\pi(\Gamma)''$ if and only if $[\pi(\Gamma)' g]= \Hpi$.
Indeed, if $[\pi (\Gamma)'g] \neq \Hpi$, then the projection $P_{\mathcal{K}}$ onto $\mathcal{K} := [\pi(\Gamma)' g]$ is in $\pi(\Gamma)''$ and $P_{\mathcal{K}} \neq I$. Thus $I-P_{\mathcal{K}} \neq 0$ and $(I - P_{\mathcal{K}} )g = 0$, showing that $g$ is not separating for $\pi(\Gamma)''$.
Conversely, if $[\pi(\Gamma)' g] = \Hpi$ and $T \in \pi(\Gamma)''$ is such that $T g = 0$, then
$0 = S Tg = T S g$ for all $S \in \pi(\Gamma)'$, and hence $T = 0$ since $\pi(\Gamma)' g$ is norm dense in $\Hpi$.

Intuitively, a vector $g \in \Hpi$ is cyclic if the corresponding orbit $\pi(\Gamma) g$ is rich enough so as to provide approximations for every vector in $\Hpi$.
On the other hand, if $g$ is separating for $\pi(\Gamma)''$,
then $\pi (\Gamma)''$ cannot be too rich, because $\pi(\Gamma)'' \ni T \mapsto Tg \in \Hpi$ is injective.

The central question of this article is the relation between the existence of cyclic and separating vectors on the one hand, and the co-volume of $\Gamma$ within a larger group $G$. As a key tool, we consider certain strengthened notions of cyclicity and separation.

\subsection{Frames and Riesz sequences}
A system $\CS$ is called a \emph{frame} for $\Hpi$ if there exist constants $A, B > 0$,
called \emph{frame bounds}, such that the following \emph{frame inequalities} hold:
\begin{align}
\label{eq_frame_bounds}
A \| f \|_{\Hpi}^2 \leq \sum_{\gamma \in \Gamma} |\langle f, \pi (\gamma) g \rangle |^2
\leq B \| f \|_{\Hpi}^2, \qquad f \in \Hpi.
\end{align}
A vector $g$ is a \emph{frame vector} if $\pi(\Gamma) g$ is a frame.
A system $\CS$ forming a frame is complete by the first (lower) bound in \eqref{eq_frame_bounds}. The second of the frame inequalities (upper bound),
\begin{align}
\label{eq_Bessel}
\sum_{\gamma  \in \Gamma} |\langle f, \pi (\gamma) g \rangle |^2 \leq B \| f \|_{\Hpi}^2
, \qquad f \in \Hpi,
\end{align}
is known as a \emph{Bessel bound}. A vector $g$ satisfying \eqref{eq_Bessel}
is a \emph{Bessel vector}. Note that the definition concerns $\pi(\Gamma) g$ as an indexed family. Two indexations of the same underlying set can have, for example, different frame bounds.
The frame bounds of a given frame and indexation are of course not unique.

The Bessel condition \eqref{eq_Bessel} is equivalent to the
 \emph{frame operator}
\[ S_{g, \Gamma} : \Hpi \to \Hpi, \;
S_{g, \Gamma} f = \sum_{\gamma \in \Gamma} \langle f, \pi(\gamma) g \rangle \pi(\gamma) g
 \]
being well-defined and bounded. The full two-sided frame inequality \eqref{eq_frame_bounds}
is equivalent to the frame operator being a positive-definite (bounded, invertible) operator
on $\Hpi$. A frame $\CS$ for which the frame bounds can be chosen
as $A=B=1$ is called a \emph{Parseval frame}, because it gives the identity
\begin{align*}
\| f \|_{\Hpi}^2 =
\sum_{\gamma  \in \Gamma} |\langle f, \pi (\gamma) g \rangle |^2, \qquad f \in \Hpi.
\end{align*}
Equivalently,
$\CS$ is a Parseval frame for $\Hpi$ if and only if its frame operator $S_{g,\Gamma}$ is the identity on $\Hpi$.
Whenever well-defined and bounded, the frame operator $S_{g, \Gamma}$ commutes with
$\pi(\gamma)$ for all $\gamma \in \Gamma$.

\begin{remark}[Turning a frame into a Parseval one]
\label{rem_parsevalization}
An arbitrary frame $\CS$ can be turned into a Parseval  frame by considering $\tilde g := S_{g, \Gamma}^{-1/2} g$. Indeed, if $\CS$ is a frame, then
$S_{g, \Gamma}$ is a positive operator, and, therefore, $\tilde g$ is well-defined. Moreover, since $S_{g, \Gamma}^{-1/2}$ also commutes with each $\pi(\gamma)$, for $f \in \Hpi$,
\begin{align*}
S_{\tilde g, \Gamma} f =
\sum_{\gamma \in \Gamma} \langle f, \pi(\gamma) S_{g, \Gamma}^{-1/2} g \rangle \pi(\gamma) S_{g, \Gamma}^{-1/2}g
= S_{g, \Gamma}^{-1/2} S_{g, \Gamma} S_{g, \Gamma}^{-1/2} f
= f,
\end{align*}
showing that $\pi(\Gamma) \tilde{g}$ is a Parseval frame for $\Hpi$.
\end{remark}

A system $\CS$ is called a \emph{Riesz sequence} in $\Hpi$ if there exist
constants $A, B > 0$, called \emph{Riesz bounds}, such that
\[
A \| c  \|_{\ell^2}^2 \leq \bigg\| \sum_{\gamma \in \Gamma} c_{\gamma} \pi (\gamma) g \bigg\|_{\Hpi}^2 \leq B \| c \|_{\ell^2}^2, \quad c = (c_{\gamma})_{\gamma \in \Gamma} \in \ell^2 (\Gamma).
\]
A duality argument, shows that a Riesz sequence satisfies the Bessel bound \eqref{eq_Bessel}. Moreover, a Riesz sequence is linearly independent and $\omega$-independent, and hence cannot admit repetitions.
A vector $g$ yielding a Riesz sequence $\pi(\Gamma) g$ is a \emph{Riesz vector}.

A complete Riesz sequence $\CS$ is called a \emph{Riesz basis} for $\Hpi$.
Equivalently, a system $\CS$ is a Riesz basis for $\Hpi$ if it is the image of an orthonormal basis under
a bounded, invertible operator on $\Hpi$.
If $\pi(\Gamma) g$ is a Riesz basis for $\Hpi$,
then $\pi(\Gamma) g$ and $ \pi(\Gamma) S^{-1}_{g,\Gamma} g = S^{-1}_{g,\Gamma} \pi(\Gamma) g$ are biorthogonal sequences in $\Hpi$, i.e., $\langle \pi(\gamma') g, S_{g,\Gamma}^{-1} \pi(\gamma) g \rangle = \delta_{\gamma', \gamma}$ for $\gamma, \gamma' \in \Gamma. $

It will be shown in Proposition \ref{prop_riesz_sep} that, under Kleppner's condition, if $\pi(\Gamma)g$ is a Riesz sequence, then $g$ is separating for $\pi(\Gamma)''$.

\begin{remark}[Turning a Riesz sequence into an orthonormal one]
\label{rem_Riesz_on}
If $\pi(\Gamma)g$ is a Riesz sequence in $\Hpi$,
then it is a Riesz basis for $[\pi(\Gamma) g]$ and hence
the frame operator $S_{g, \Gamma} : [\pi(\Gamma) g ] \to [\pi(\Gamma) g]$
is well-defined and bounded. The biortogonality of the systems
$\pi(\Gamma)g$ and $\pi (\Gamma) S^{-1}_{g,\Gamma} g$ yields that
\[
\big\langle S^{-1/2}_{g, \Gamma} \pi(\gamma') g, S^{-1/2}_{g, \Gamma} \pi(\gamma) g \big\rangle
= \big \langle \pi(\gamma') g, S^{-1}_{g, \Gamma} \pi(\gamma) g \big\rangle = \delta_{\gamma', \gamma},
\quad \gamma, \gamma' \in \Gamma,
\]
showing that $\pi(\Gamma) S^{-1/2}_{g,\Gamma} g = S^{-1/2}_{g, \Gamma} \pi(\Gamma) g$
is an orthonormal sequence in $\Hpi$.
\end{remark}

For more on frames and Riesz bases, see, e.g., the books \cite{christensen2016introduction, young2001introduction}.

\subsection{Bounded operators and Bessel vectors}
The \emph{coefficient operator} and \emph{reconstruction operator} associated with $\CS$
are given respectively by
\begin{align} \label{eq:coefficient}
C_{g,\Gamma} f =  \big( \langle f, \pi(\gamma) g \rangle \big)_{\gamma \in \Gamma},
\quad f \in \Hpi,
\end{align}
and
\begin{align} \label{eq:reconstruction}
D_{g, \Gamma} c =  \sum_{\gamma \in \Gamma} c_{\gamma} \pi(\gamma) g,
\quad c = (c_{\gamma})_{\gamma \in \Gamma} \in c_{00} (\Gamma),
\end{align}
where $c_{00} (\Gamma) \subseteq \mathbb{C}^{\Gamma}$
denotes the space of finite sequences on $\Gamma$.

Recall that $\CS$ is called a \emph{Bessel sequence} if there exists  $B > 0$ such that
\eqref{eq_Bessel} holds. In this case, the coefficient operator is well-defined and bounded
as a map from $\Hpi$ into $\ell^2 (\Gamma)$, and its adjoint $D_{g, \Gamma}$ is well-defined and bounded from $\ell^2 (\Gamma)$ into $\Hpi$.

The space of Bessel vectors is denoted by $\mathcal{B}_{\pi}$.
The assumption that $(\pi, \Hpi)$ is square-integrable in the sense of \eqref{eq:squareintegrable_dense},
together with the uniform boundedness principle,
yields that the space $\Bpi$ is norm dense in $\Hpi$.

\subsection{Coefficient and reconstruction as unbounded operators}
In the sequel, we treat the coefficient mapping \eqref{eq:coefficient}
and reconstruction mapping \eqref{eq:reconstruction} as operators from domains
and on images in which they do not necessarily act as bounded operators.

The coefficient operator $C_{g, \Gamma}$,
with domain
\[
\dom(C_{g, \Gamma}) :=  \big\{f \in \mathcal{H}_{\pi} \; : \; C_{g, \Gamma} f \in \ell^2 (\Gamma) \big\}
\]
is given by $f \mapsto ( \langle f, \pi(\gamma) g \rangle )_{\gamma \in \Gamma}$
and well-defined from $\dom(C_{g,\Gamma})$ into $\ell^2 (\Gamma)$.

The reconstruction operator $D_{g, \Gamma}$, with domain
\begin{align} \label{eq:dom_reconstruction}
\dom(D_{g, \Gamma} ) :=
\bigg\{ c \in \ell^2 (\Gamma) \; \bigg  | \; \exists f \in \mathcal{H}_{\pi} \; : \; \sum_{\gamma \in \Gamma} c_{\gamma} \langle \pi (\gamma) g, h \rangle \
= \langle f, h \rangle, \; \forall h \in \mathcal{B}_{\pi} \bigg\}
\end{align}
is given by $D_{g, \Gamma} c = f$ and well-defined from $\dom(D_{g, \Gamma})$ into $\Hpi$,
where $f \in \Hpi$ is the vector occurring in the domain definition \eqref{eq:dom_reconstruction}.
Note that $f$ is uniquely determined since $\Bpi$ is a dense subspace in $\Hpi$.

For simplicity, we also sometimes write
\begin{align*}
D_{g, \Gamma} c = \sum_{\gamma \in \Gamma} c_\gamma \pi(\gamma) g;
\end{align*}
the series is however a formal expression for the vector $f$ in \eqref{eq:dom_reconstruction}.

The following result provides basic properties of the (possibly) unbounded coefficient and reconstruction operators.

\begin{proposition} \label{prop:denselydefined}
Let $(\pi, \Hpi)$ be a square-integrable $\sigma$-representation of a countable discrete group $\Gamma$.
Let $g \in \Hpi$ be an arbitrary vector.
\begin{enumerate}[(i)]
\item
The coefficient operator $C_{g, \Gamma} : \dom(C_{g, \Gamma}) \to \ell^2 (\Gamma)$, $C_{g, \Gamma} f = ( \langle f,  \pi(\gamma) g \rangle )_{\gamma \in \Gamma}$
is a closed, densely defined operator.
\item The reconstruction operator $D_{g, \Gamma}  : \dom(D_{g, \Gamma}) \to \mathcal{H}_{\pi}$, $D_{g, \Gamma} c = f$,
is a closed, densely defined operator.
\end{enumerate}
\end{proposition}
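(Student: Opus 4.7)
The plan is to verify the two parts separately and in parallel, carrying out a direct, hands-on analysis rather than invoking a general "adjoint-is-closed" result — although the underlying symmetry will make $D_{g,\Gamma}$ morally the adjoint of $C_{g,\Gamma}$. The main conceptual obstacle is the density of $\dom(C_{g,\Gamma})$: unlike $D_{g,\Gamma}$ (whose domain obviously contains $c_{00}(\Gamma)$), the domain of $C_{g,\Gamma}$ has no evident dense subspace, since $g$ is completely arbitrary and need not be a Bessel vector. I would resolve this by exploiting a near-symmetry between the roles of $g$ and $h$ in the coefficient $\langle h,\pi(\gamma)g\rangle$.

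For part (i), closedness of $C_{g,\Gamma}$ is routine: if $f_n\to f$ in $\Hpi$ and $C_{g,\Gamma}f_n\to c$ in $\ell^{2}(\Gamma)$, then, coordinatewise, $(C_{g,\Gamma}f_n)_\gamma=\langle f_n,\pi(\gamma)g\rangle\to\langle f,\pi(\gamma)g\rangle$, while $\ell^{2}$-convergence forces pointwise convergence to $c_\gamma$; hence $c_\gamma=\langle f,\pi(\gamma)g\rangle$ for every $\gamma$ and $c\in\ell^{2}(\Gamma)$ forces $f\in\dom(C_{g,\Gamma})$. For density I claim that every Bessel vector $h\in\Bpi$ lies in $\dom(C_{g,\Gamma})$. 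Since $\pi$ is a $\sigma$-representation, $\pi(\gamma)^{*}=\overline{\sigma(\gamma,\gamma^{-1})}\,\pi(\gamma^{-1})$, whence
\[
|\langle h,\pi(\gamma)g\rangle|=|\langle \pi(\gamma)^{*}h,g\rangle|=|\langle \pi(\gamma^{-1})h,g\rangle|=|\langle g,\pi(\gamma^{-1})h\rangle|.
\]
Summing over $\gamma\in\Gamma$ and re-indexing by $\gamma'=\gamma^{-1}$ gives $\sum_{\gamma}|\langle h,\pi(\gamma)g\rangle|^{2}=\sum_{\gamma'}|\langle g,\pi(\gamma')h\rangle|^{2}\le B_{h}\|g\|^{2}$, where $B_{h}$ is the Bessel bound of $h$. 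Hence $\Bpi\subseteq\dom(C_{g,\Gamma})$, and since $\Bpi$ is dense in $\Hpi$ by square-integrability, so is $\dom(C_{g,\Gamma})$.

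For part (ii), density is immediate: for any $c\in c_{00}(\Gamma)$ the finite sum $f:=\sum_{\gamma}c_\gamma \pi(\gamma)g\in\Hpi$ witnesses the condition in \eqref{eq:dom_reconstruction}, so $c_{00}(\Gamma)\subseteq\dom(D_{g,\Gamma})$; and $c_{00}(\Gamma)$ is dense in $\ell^{2}(\Gamma)$. For closedness, suppose $c^{(n)}\to c$ in $\ell^{2}(\Gamma)$ with $c^{(n)}\in\dom(D_{g,\Gamma})$ and $f_n:=D_{g,\Gamma}c^{(n)}\to f$ in $\Hpi$. Fix $h\in\Bpi$ and set $\phi_\gamma:=\langle\pi(\gamma)g,h\rangle$. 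The calculation from the previous paragraph shows $(\phi_\gamma)_{\gamma}\in\ell^{2}(\Gamma)$, so the pairings $\sum_{\gamma}c^{(n)}_\gamma\phi_\gamma=\langle c^{(n)},\overline{\phi}\rangle_{\ell^{2}}$ converge to $\langle c,\overline{\phi}\rangle_{\ell^{2}}=\sum_{\gamma}c_\gamma\phi_\gamma$. Combining with $\langle f_n,h\rangle\to\langle f,h\rangle$ in the defining identity of $\dom(D_{g,\Gamma})$ applied to each $c^{(n)}$, one obtains $\sum_{\gamma}c_\gamma\langle\pi(\gamma)g,h\rangle=\langle f,h\rangle$ for every $h\in\Bpi$, which says precisely that $c\in\dom(D_{g,\Gamma})$ and $D_{g,\Gamma}c=f$.

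The only step requiring a genuine idea is the density of $\dom(C_{g,\Gamma})$; the rest are routine continuity passages once one has observed that, thanks to $h$ being Bessel, the relevant series are in fact $\ell^{2}$-pairings and therefore pass to the limit unproblematically. The $\sigma$-cocycle causes no trouble since it enters only as a unimodular factor that disappears inside absolute values.
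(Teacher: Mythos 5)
Your proof is correct and follows essentially the same route as the paper's: closedness of $C_{g,\Gamma}$ via coordinatewise convergence through Cauchy--Schwarz, density of $\dom(D_{g,\Gamma})$ via $c_{00}(\Gamma)$, and closedness of $D_{g,\Gamma}$ by rewriting the defining identity as an $\ell^2(\Gamma)$-pairing with $C_{g,\Gamma} h \in \ell^2(\Gamma)$ for Bessel $h$ and passing to the limit. The one point where you go beyond the paper is that you explicitly verify the inclusion $\Bpi \subseteq \dom(C_{g,\Gamma})$ via the unimodular-cocycle identity $|\langle h, \pi(\gamma) g \rangle| = |\langle g, \pi(\gamma^{-1}) h \rangle|$ and the substitution $\gamma \mapsto \gamma^{-1}$, a (standard but genuinely needed) detail that the paper asserts without justification.
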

\begin{proof}
(i) The map  $C_{g, \Gamma} : \dom(C_{g, \Gamma}) \to \ell^2 (\Gamma)$ is densely defined
since the dense space of Bessel vector $\Bpi \subset \dom(C_{g, \Gamma})$.
To show that $C_{g, \Gamma}$ is closed, let $f_n \to f$ in $\Hpi$ with $f_n \in \dom(C_{g, \Gamma})$
and assume that $C_{g, \Gamma} f_n \to c$ in $\ell^2 (\Gamma)$ as $n \to \infty$.
By Cauchy-Schwarz,
\[
|C_{g, \Gamma} f_n (\gamma) - C_{g, \Gamma} f (\gamma)| = |\langle f_n - f, \pi(\gamma)g \rangle | \leq \| f_n - f \|_{\Hpi} \| g \|_{\Hpi} \to 0
\]
as $n \to \infty$, yielding that $c = C_{g, \Gamma} f$. This shows that $C_{g,\Gamma} f \in \ell^2 (\Gamma)$, and hence $f \in \dom(C_{g,\Gamma})$.

(ii) Note that the map $D_{g, \Gamma}$ is densely defined
since the space of finite sequences $c_{00} (\Gamma) \subseteq \dom(D_{g, \Gamma})$.
To show that $D_{g, \Gamma}$ is closed,
let $(c^{(k)})_{k \in \mathbb{N}} \subset \dom(D_{g, \Gamma})$ be such that
$c^{(k)} \to c$ in $\ell^2 (\Gamma)$ and $f_k := D_{g, \Gamma} c^{(k)} \to f$ for some $f \in \mathcal{H}_{\pi}$
as $k \to \infty$.
Let $h \in \mathcal{B}_{\pi}$ be arbitrary. Then,
\[
\big\langle c^{(k)}, C_{g, \Gamma} h \rangle_{\ell^2 (\Gamma)} = \sum_{\gamma \in \Gamma} c_{\gamma}^{(k)} \langle \pi (\gamma) g, h \rangle = \langle f_k , h \rangle.
\]
Since $C_{g,\Gamma} h \in \ell^2 (\Gamma)$ as $h \in \mathcal{B}_{\pi}$, it follows that
$\langle c^{(k)}, C_{g,\Gamma} h \rangle_{\ell^2 (\Gamma)} \to \langle c, C_{g, \Gamma} h \rangle$ as $k \to \infty$, and hence
\[
 \sum_{\gamma \in \Gamma} c_{\gamma} \langle \pi (\gamma) g, h \rangle
 = \lim_{k \to \infty} \big\langle c^{(k)}, C_{g,\Gamma} h \big\rangle_{\ell^2 (\Gamma)}
 = \lim_{k \to \infty} \langle f_k , h \rangle
 = \langle f, h \rangle.
\]
Thus $c \in \dom(D_{g, \Gamma})$ and $D_{g, \Gamma} c = f$,
which shows that $D_{g, \Gamma}$ is a closed operator.
\end{proof}

\begin{remark}
For a general frame $\{f_i\}_{i \in I}$ in an abstract Hilbert space $\mathcal{H}$,
the coefficient operator $f \mapsto ( \langle f, f_i \rangle )_{i \in I}$ is always closed,
but not necessarily densely defined, on its canonical domain.
The reconstruction operator $(c_i)_{i \in I} \mapsto \sum_{i \in I} c_i f_i$
may fail to be closed on the domain
\[
\bigg\{ c = (c_i)_{i \in I} \in \ell^2 (I) \; : \; \sum_{i \in I} c_i f_i
\mbox{ converges in the norm of }\mathcal{H} \bigg\},
\]
see \cite{christensen1995frames}.
Crucially, in \eqref{eq:dom_reconstruction} and part (ii) of Proposition \ref{prop:denselydefined}, we define the series in a suitably weak form.
\end{remark}

\subsection{Uniqueness for the extended representation} \label{sec:uniqueness}
Given  $c = (c_{\gamma})_{\gamma \in \Gamma} \in \ell^2 (\Gamma)$,
define the operator
\begin{align}
\label{eq_pic}
\pi(c) : \Bpi \to \Hpi, \quad  \pi(c) g :=\sum_{\gamma \in \Gamma} c_\gamma \pi(\gamma) g.
\end{align}
Note that $\pi(c) = \sum_{\gamma \in \Gamma} c_{\gamma} \pi(\gamma)$ is well-defined since the series representing $\pi(c)g$ converges
unconditionally in $\Hpi$ by the Bessel property.

In the notation of \eqref{eq_pic}, conjugating the operator $\pi(c)$ simply
corresponds to (twisted) conjugation of the corresponding sequence $c$.

\begin{lemma} \label{lem:conjugate}
Let $(\pi, \Hpi)$ be a square-integrable $\sigma$-representation
of a countable discrete group $\Gamma$.
Let $c  \in \ell^2 (\Gamma)$.
Then, for all $\gamma \in \Gamma$,
\[
\pi(\gamma) \pi (c) \pi(\gamma)^*
= \pi(\vartheta_{\Gamma}^{\sigma} (\gamma) c),
\]
where
\begin{align} \label{eq:twosided_translation}
(\vartheta_\Gamma^{\sigma} (\gamma) c)_{\gamma'} := \overline{\sigma(\gamma^{-1}, \gamma')} \sigma(\gamma^{-1} \gamma' \gamma, \gamma^{-1}) c_{\gamma^{-1} \gamma' \gamma}, \qquad \gamma, \gamma' \in \Gamma.
\end{align}
\end{lemma}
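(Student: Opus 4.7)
The plan is to verify the identity on an arbitrary Bessel vector $g \in \mathcal{B}_\pi$, in which case both sides are defined by unconditionally convergent series in $\mathcal{H}_\pi$. Since $\pi(\gamma)$ is bounded, it can be pulled through the summation, so
\[
\pi(\gamma)\,\pi(c)\,\pi(\gamma)^* g = \sum_{\gamma' \in \Gamma} c_{\gamma'}\,\pi(\gamma)\pi(\gamma')\pi(\gamma)^* g,
\]
and the whole problem reduces to rewriting $\pi(\gamma)\pi(\gamma')\pi(\gamma)^*$ as a scalar multiple of $\pi(\gamma\gamma'\gamma^{-1})$, followed by a reindexing.

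First I would record the identity $\pi(\gamma)^* = \overline{\sigma(\gamma,\gamma^{-1})}\,\pi(\gamma^{-1})$, which follows from the unitarity of $\pi(\gamma)$ together with $\pi(\gamma)\pi(\gamma^{-1}) = \sigma(\gamma,\gamma^{-1})\,I$. Two successive applications of the projective relation $\pi(x)\pi(y) = \sigma(x,y)\pi(xy)$ then give
\[
\pi(\gamma)\pi(\gamma')\pi(\gamma)^* = \overline{\sigma(\gamma,\gamma^{-1})}\,\sigma(\gamma',\gamma^{-1})\,\sigma(\gamma,\gamma'\gamma^{-1})\,\pi(\gamma\gamma'\gamma^{-1}).
\]
Substituting $\gamma'' := \gamma\gamma'\gamma^{-1}$, which is a bijection of $\Gamma$ with inverse $\gamma' = \gamma^{-1}\gamma''\gamma$, reindexes the series above as $\sum_{\gamma'' \in \Gamma} a_{\gamma''}\,\pi(\gamma'')g$ with
\[
a_{\gamma''} = \overline{\sigma(\gamma,\gamma^{-1})}\,\sigma(\gamma^{-1}\gamma''\gamma,\gamma^{-1})\,\sigma(\gamma,\gamma^{-1}\gamma'')\,c_{\gamma^{-1}\gamma''\gamma}.
\]

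Matching $a_{\gamma''}$ with the coefficient $(\vartheta_\Gamma^\sigma(\gamma)c)_{\gamma''} = \overline{\sigma(\gamma^{-1},\gamma'')}\,\sigma(\gamma^{-1}\gamma''\gamma,\gamma^{-1})\,c_{\gamma^{-1}\gamma''\gamma}$ from \eqref{eq:twosided_translation} reduces to the single scalar identity
\[
\sigma(\gamma,\gamma^{-1}\gamma'')\,\sigma(\gamma^{-1},\gamma'') = \sigma(\gamma,\gamma^{-1}),
\]
which is exactly the cocycle relation applied to the triple $(\gamma,\gamma^{-1},\gamma'')$ combined with the normalization $\sigma(e,\gamma'') = 1$. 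The $\ell^2$-membership of $\vartheta_\Gamma^\sigma(\gamma)c$ is automatic because the twist factors are unimodular and conjugation by $\gamma$ is a bijection of $\Gamma$, so the right-hand side is a well-defined operator $\mathcal{B}_\pi \to \mathcal{H}_\pi$ in the sense of \eqref{eq_pic}; this, together with the stability of $\mathcal{B}_\pi$ under $\pi(\gamma)^*$ (which is a scalar multiple of $\pi(\gamma^{-1})$), makes the operator identity meaningful on $\mathcal{B}_\pi$. I do not expect any real obstacle: the argument is entirely a bookkeeping exercise, and the only point requiring care is tracking the order of the cocycle factors across the two applications of the projective relation and the reindexing substitution.
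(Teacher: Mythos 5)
Your proposal is correct and takes essentially the same route as the paper's proof: both rewrite $\pi(\gamma)\pi(\gamma')\pi(\gamma)^*$ as a unimodular scalar times $\pi(\gamma\gamma'\gamma^{-1})$, reindex the series by the conjugation bijection, and match the resulting cocycle factors against \eqref{eq:twosided_translation}. The only differences are cosmetic: you expand $\pi(\gamma)^* = \overline{\sigma(\gamma,\gamma^{-1})}\,\pi(\gamma^{-1})$ at the outset so that the final matching needs a single application of the cocycle identity to the triple $(\gamma,\gamma^{-1},\gamma'')$, whereas the paper combines two cocycle identities after the substitution, and your explicit remark that $\mathcal{B}_{\pi}$ is stable under $\pi(\gamma)^*$ makes the sense in which the operator identity holds on $\mathcal{B}_{\pi}$ slightly more careful than the paper's formal series manipulation.
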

\begin{proof}
Let $\gamma \in \Gamma$ be fixed.
The identity
$
\pi(\gamma) \pi( \gamma') \pi(\gamma)^* = \sigma(\gamma, \gamma') \overline{\sigma(\gamma \gamma' \gamma^{-1}, \gamma)} \pi(\gamma \gamma' \gamma^{-1})
$
holds for any $\gamma' \in \Gamma$.
Therefore,
\begin{align*}
\pi(\gamma) \pi (c) \pi(\gamma)^* &= \sum_{\gamma' \in \Gamma} c_{\gamma'}
\sigma(\gamma, \gamma') \overline{\sigma(\gamma \gamma' \gamma^{-1}, \gamma)} \pi(\gamma \gamma' \gamma^{-1}) \\
&= \sum_{\gamma' \in \Gamma} c_{\gamma^{-1} \gamma' \gamma} \sigma(\gamma, \gamma^{-1} \gamma' \gamma) \overline{\sigma(\gamma', \gamma)} \pi(\gamma'), \numberthis \label{eq:conjugate1}
\end{align*}
where the second equality follows from the change of variable $\gamma' \mapsto \gamma \gamma' \gamma^{-1}$. Combining the identity
\[
\sigma(\gamma, \gamma^{-1} \gamma' \gamma) \sigma(\gamma' \gamma, \gamma^{-1})
= \sigma(\gamma, \gamma^{-1} \gamma') \sigma(\gamma^{-1} \gamma' \gamma, \gamma^{-1})
\]
with
\[
\sigma(\gamma, \gamma^{-1} \gamma') \sigma(\gamma^{-1}, \gamma')
= \sigma(\gamma, \gamma^{-1}) = \sigma(\gamma', \gamma) \sigma(\gamma' \gamma, \gamma^{-1}),
\]
yields that $\sigma(\gamma, \gamma^{-1} \gamma' \gamma) \overline{\sigma(\gamma', \gamma)} = \overline{\sigma(\gamma^{-1}, \gamma')} \sigma(\gamma^{-1} \gamma' \gamma, \gamma^{-1})$
for all $\gamma' \in \Gamma$. Inserting this in \eqref{eq:conjugate1} gives
\begin{align*}
\pi(\gamma) \pi (c) \pi(\gamma)^* &= \sum_{\gamma' \in \Gamma} c_{\gamma^{-1} \gamma' \gamma} \overline{\sigma(\gamma^{-1}, \gamma')} \sigma(\gamma^{-1} \gamma' \gamma, \gamma^{-1}) \pi(\gamma')
= \sum_{\gamma' \in \Gamma} \big(\vartheta_{\Gamma}^{\sigma} (\gamma) c\big)_{\gamma'} \pi (\gamma'),
\end{align*}
as desired.
\end{proof}

Under Kleppner's condition (see Section \ref{sec:kleppner}), we have the following important uniqueness result.
\begin{proposition}
\label{prop_uniq_op}
Let $(\pi, \Hpi)$ be a square-integrable $\sigma$-representation
of a countable discrete group $\Gamma$. Suppose that $(\Gamma, \sigma)$
satisfies Kleppner's condition. Suppose $c \in \ell^2(\Gamma)$ is such that
$\pi(c) \equiv 0$ on $\mathcal{B}_{\pi}$. Then $c = 0$.
\end{proposition}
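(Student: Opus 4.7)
The plan is to analyze the closed subspace
$V := \{c \in \ell^2(\Gamma) : \pi(c)g = 0 \text{ for all } g \in \Bpi\}$
and show $V = \{0\}$. Closedness of $V$ is immediate: for each $g \in \Bpi$, the map $c \mapsto \pi(c)g = D_{g,\Gamma}c$ is bounded from $\ell^2(\Gamma)$ to $\Hpi$ by the Bessel property. Since $\Bpi$ is $\pi(\Gamma)$-invariant (the Bessel constant is unitarily invariant), Lemma \ref{lem:conjugate} implies that $V$ is invariant under the twisted conjugation $\vartheta_\Gamma^\sigma(\gamma)$ for every $\gamma \in \Gamma$.

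The heart of the proof, and the sole place where Kleppner's condition enters, is the identification of the $\vartheta_\Gamma^\sigma(\Gamma)$-fixed vectors in $\ell^2(\Gamma)$. Suppose $c \in \ell^2(\Gamma)$ is fixed. Taking moduli in \eqref{eq:twosided_translation} yields $|c_{\gamma^{-1}\gamma_0\gamma}| = |c_{\gamma_0}|$ for all $\gamma, \gamma_0 \in \Gamma$, so $|c|$ is constant on every conjugacy class. Specialising the fixed-point identity to $\gamma$ in the centraliser $Z(\gamma_0)$, so that $\gamma^{-1}\gamma_0\gamma = \gamma_0$, forces $\sigma(\gamma_0, \eta) = \sigma(\eta, \gamma_0)$ for every $\eta \in Z(\gamma_0)$ whenever $c_{\gamma_0} \neq 0$; that is, every point of the support of $c$ is $\sigma$-regular. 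Kleppner's condition then requires each non-identity point of support to lie in an infinite conjugacy class, contradicting $c \in \ell^2(\Gamma)$. Hence the fixed-point subspace equals $\mathbb{C}\delta_e$.

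A direct computation from \eqref{eq:twosided_translation} gives $\vartheta_\Gamma^\sigma(\gamma)\delta_e = \delta_e$ for every $\gamma \in \Gamma$. Since $V$ and $V^\perp$ are both $\vartheta_\Gamma^\sigma$-invariant, the orthogonal projection $P_V$ commutes with each $\vartheta_\Gamma^\sigma(\gamma)$, so $P_V\delta_e$ is fixed and, by the previous paragraph, $P_V\delta_e = \lambda \delta_e$ for some $\lambda \in \{0,1\}$. The value $\lambda = 1$ is excluded because $\delta_e \in V$ would give $\pi(e) = I \equiv 0$ on $\Bpi$, contradicting $\Hpi \neq \{0\}$. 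Therefore $\delta_e \in V^\perp$, i.e., $c_e = 0$ for every $c \in V$.

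To upgrade this to $c_{\gamma_0} = 0$ for an arbitrary $\gamma_0 \in \Gamma$, use the cocycle identity $\pi(\gamma)\pi(\gamma_0^{-1}) = \sigma(\gamma, \gamma_0^{-1})\pi(\gamma\gamma_0^{-1})$ and a change of summation variable to rewrite $\pi(c)\pi(\gamma_0^{-1}) = \pi(c')$ on $\Bpi$, where $c'_{\gamma'} := \sigma(\gamma'\gamma_0, \gamma_0^{-1})\, c_{\gamma'\gamma_0}$; note $c' \in \ell^2(\Gamma)$ with $|c'_e| = |c_{\gamma_0}|$. Since $\pi(\gamma_0^{-1})$ maps $\Bpi$ into itself, $\pi(c')g = \pi(c)\pi(\gamma_0^{-1})g = 0$ for all $g \in \Bpi$, so $c' \in V$ and the previous step forces $c'_e = 0$, yielding $c_{\gamma_0} = 0$. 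The main obstacle is the middle step, namely identifying the fixed-point subspace as $\mathbb{C}\delta_e$: this is the only place where Kleppner's condition must be combined with $\ell^2$-summability, and it is the algebraic core of the argument.
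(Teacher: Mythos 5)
Your proof is correct, and its middle step takes a genuinely different route from the paper's. The shared skeleton is the same: the kernel space $V$ (the paper's $\mathcal{K}$) is closed and $\vartheta_\Gamma^{\sigma}(\Gamma)$-invariant by Lemma \ref{lem:conjugate}; the combination of $\sigma$-regularity of support points, Kleppner's condition, and $\ell^2$-summability shows that any $\vartheta_\Gamma^{\sigma}(\Gamma)$-fixed vector is supported at $e$; and a translation argument promotes $c_e=0$ to $c\equiv 0$ (your right multiplication by $\pi(\gamma_0^{-1})$, with $c'_{\gamma'}=\sigma(\gamma'\gamma_0,\gamma_0^{-1})c_{\gamma'\gamma_0}$, plays exactly the role of the left $\lambda_\Gamma^{\sigma}(\Gamma)$-invariance used in Step 5 of the paper). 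Where you diverge is in how the fixed-point analysis reaches a general $c\in V$: the paper takes the unique minimal-norm element $d$ of $\overline{\co(\vartheta(\Gamma)c)}$ --- an instance of the Alaoglu--Birkhoff minimal method --- notes that $d$ is fixed and still lies in $\mathcal{K}$, concludes $d=0$, and recovers $c_e=0$ because the $e$-th coordinate is constant on the closed convex hull. You instead identify the full fixed-point subspace of $\vartheta_\Gamma^{\sigma}$ in $\ell^2(\Gamma)$ as $\mathbb{C}\delta_e$, observe that $P_V$ commutes with the unitaries $\vartheta_\Gamma^{\sigma}(\gamma)$, so $P_V\delta_e$ is fixed and equals $\lambda\delta_e$ with $\lambda\in\{0,1\}$, and exclude $\lambda=1$ since $\delta_e\in V$ would force $I\equiv 0$ on the dense subspace $\Bpi$. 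This projection/commutant trick is arguably more elementary: it replaces the ergodic minimal method by the standard fact that an invariant subspace of a unitary group action has invariant orthocomplement, at the price of determining the fixed vectors in all of $\ell^2(\Gamma)$ rather than only inside an orbit hull (the computation is identical either way). Two points you leave implicit are routine but worth noting: that $\vartheta_\Gamma^{\sigma}$ is a genuine unitary representation, so $\vartheta_\Gamma^{\sigma}(\gamma)^{*}=\vartheta_\Gamma^{\sigma}(\gamma^{-1})$ also preserves $V$ (this is what the commutation of $P_V$ actually requires), and that $\pi(\gamma_0^{-1})$ maps $\Bpi$ into itself with the same Bessel bound, which legitimizes the reindexing $\pi(c)\pi(\gamma_0^{-1})g=\pi(c')g$ via unconditional convergence. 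Your exclusion of $\lambda=1$ uses $\Hpi\neq\{0\}$, but the paper's Step 4 uses the same implicit assumption when deducing $d_e=0$ from $d\in\mathcal{K}$, so this is not a defect of your argument relative to the original.
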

\begin{proof}
The proof is divided into four steps.

\textbf{Step 1.} (Invariance of kernel).
Let $\vartheta = \vartheta_{\Gamma}^{\sigma}$ be the unitary action of $\Gamma$ on $\ell^2 (\Gamma)$
given by \eqref{eq:twosided_translation}.
Define the closed subspace
\[
\mathcal{K} := \bigg\{ c \in \ell^2 (\Gamma) \; : \;
\pi(c) g = 0,
\quad \forall g \in B_{\pi} \bigg\}
= \bigcap_{g \in \mathcal{B}_{\pi}} \mathcal{N} (D_{g, \Gamma})
\]
of $\ell^2 (\Gamma)$. The space $\mathcal{K}$ is $\vartheta (\Gamma)$-invariant.
Indeed, for $c \in \mathcal{K}$, by Lemma \ref{lem:conjugate},
\begin{align*}
\bigg\langle \pi\big( \vartheta(\gamma)c\big) g, h \bigg \rangle
= \bigg\langle \sum_{\gamma' \in \Gamma} c_{\gamma'} \pi (\gamma') \pi(\gamma)^* g, \pi(\gamma)^* h \bigg\rangle = 0
\end{align*}
for all $\gamma \in \Gamma$ and $g, h \in \Bpi$.
Moreover, the space $\mathcal{K}$ is $\lambda_{\Gamma}^{\sigma} (\Gamma)$-invariant:
For $\gamma \in \Gamma$ and $g,h \in \mathcal{B}_{\pi}$,
\begin{align*}
\bigg\langle \pi \big(\lambda_{\Gamma}^{\sigma} (\gamma) c \big) g, h \bigg\rangle
&= \sum_{\gamma' \in \Gamma} \sigma(\gamma, \gamma^{-1} \gamma') c_{\gamma^{-1} \gamma'}
\langle \pi(\gamma') g,h \rangle \\
 &= \sum_{\gamma' \in \Gamma} \sigma(\gamma, \gamma')
\overline{\sigma(\gamma, \gamma')} c_{\gamma'} \langle \pi(\gamma) \pi(\gamma') g, h \rangle \\
&= \bigg \langle \sum_{\gamma' \in \Gamma} c_{\gamma'}  \pi(\gamma') g, \pi(\gamma)^* h \bigg\rangle = 0,
\end{align*}
where the second equality follows from the change of variable $\gamma' \mapsto \gamma \gamma'$.

\textbf{Step 2.} (Minimal fixed point).
Let $c \in \mathcal{K}$ be arbitrary and consider the norm-closed convex hull
$\overline{\co(\vartheta (\Gamma) c)}$ in the Hilbert space $\mathcal{K}$.
Then there exists a unique $d \in \overline{\co(\vartheta (\Gamma) c)}$
of minimal norm. By uniqueness, the vector $d$
must be $\vartheta (\Gamma)$-invariant, that is,
\begin{align} \label{eq:d_constant}
d_{\gamma'} = \overline{\sigma(\gamma^{-1}, \gamma')} \sigma(\gamma^{-1} \gamma' \gamma, \gamma^{-1}) d_{\gamma^{-1} \gamma' \gamma},
\qquad \mbox{for all }\gamma, \gamma' \in \Gamma.
\end{align}
Therefore, $|d|$ is constant on conjugacy classes.

\textbf{Step 3.} ($\sigma$-regularity of non-zero entries).
Let $\gamma' \in \Gamma$ be such that $d_{\gamma'} \neq 0$.
Suppose $\gamma \in \Gamma$ commutes with $\gamma'$.
Then, by \eqref{eq:d_constant},
\begin{align} \label{eq:sigma-regular}
0 \neq d_{\gamma'} = \overline{\sigma(\gamma, \gamma')} \sigma(\gamma \gamma' \gamma^{-1}, \gamma ) d_{\gamma \gamma' \gamma^{-1}} =
 \overline{\sigma(\gamma, \gamma')} \sigma(\gamma', \gamma) d_{\gamma'},
\end{align}
and, therefore, $\sigma(\gamma, \gamma') = \sigma(\gamma', \gamma)$.
Thus $\gamma'$ is $\sigma$-regular.

\textbf{Step 4.} (Vanishing coefficients on regular classes).
By Step 3, if $\gamma \in \Gamma$ is such that $d_\gamma \not= 0$, then $\gamma$ is $\sigma$-regular, and by Kleppner's condition,
the conjugacy class $C_\gamma$ is infinite, unless $\gamma=e$. On the other hand, by \eqref{eq:d_constant}, $|d|$ is constant
on $C_\gamma$, while $|d| \in \ell^2 (\Gamma)$, and therefore $C_\gamma$ must be finite. We conclude that
$d_{\gamma} = 0$ for all $\gamma \in \Gamma \setminus \{e\}$.
Moreover, since $d \in \mathcal{K}$, also $d_e = 0$, and hence $d=0$.

\textbf{Step 5.} (Conclusion). The above shows that for an arbitrary $c \in \mathcal{K}$,
we have
$0 \in \overline{\co(\vartheta (\Gamma) c)}$.
Since $(\vartheta(\gamma) c)_e = c_e$ for all $\gamma \in \Gamma$,
it follows that $c_e = 0_e = 0$. The $\lambda^{\sigma}_{\Gamma} (\Gamma)$-invariance
of $\mathcal{K}$ now yields that $c_{\gamma} = \overline{\sigma(\gamma^{-1}, \gamma)} \big(\lambda_{\Gamma}^{\sigma} (\gamma^{-1}) c\big)_e = 0$ for all $\gamma \in \Gamma$.
This completes the proof.
\end{proof}

Step 2 in the proof of Proposition \ref{prop_uniq_op} is an application of the \emph{minimal method} for ergodic theorems \cite[Section 10]{alaoglu1940ergodic}.

For the Heisenberg projective representation $(\pi, L^2 (\mathbb{R}^d))$ of a lattice $\Gamma \leq \mathbb{R}^{2d}$ (see Section \ref{sec:gabor}), an alternative proof for the uniqueness result of Proposition \ref{prop_uniq_op} can be given using the uniqueness of coefficients in Fourier series, see \cite[Proposition 3.2]{groechenig2007gabor}. In that setting, the statement of Proposition \ref{prop_uniq_op} is true even without Kleppner's condition, while in general it is not, e.g., for $\Gamma = \mathrm{SL}(2, \mathbb{Z})$ and a holomorphic discrete series representation $\pi$ of $\mathrm{SL}(2, \mathbb{R})$, cf. Example \ref{sec:SL_ICC}.

\section{Improving spanning properties} \label{sec:improving}
\subsection{Mackey-type version of Schur's lemma}
We will repeatedly use the following folklore result.

\begin{proposition} \label{prop:intertwiner}
For $i \in \{1,2\}$, let $(\pi_i, \mathcal{H}_{\pi_i})$
be  $\sigma$-representations of a locally compact group $G$.
Suppose that $T : \mathcal{H}_{\pi_1} \to \mathcal{H}_{\pi_2}$
is a closed, densely defined operator intertwining $(\pi_1, \mathcal{H}_{\pi_1})$ and $(\pi_2, \mathcal{H}_{\pi_2})$; that is, the domain and range of $T$ are respectively $\pi_1(G)$ and $\pi_2(G)$-invariant, and
\begin{align*}
T \pi_1(x) = \pi_2(x) T, \quad x \in G.
\end{align*}
If
\[
T = U |T|
\]
is the polar decomposition of $T$, then $|T| : \dom (T) \to \mathcal{H}_{\pi_2}$ commutes with $(\pi_1, \mathcal{H}_{\pi_1})$ and the isometry
$U : \mathcal{N}(T)^{\perp} \to \overline{\mathcal{R}(T)}$
isometrically intertwines $(\pi_1, \mathcal{H}_{\pi_1})$ and $(\pi_2, \mathcal{H}_{\pi_2})$.
\end{proposition}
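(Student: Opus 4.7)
The plan is to reduce the intertwining of the polar parts $U$ and $|T|$ to the uniqueness of the polar decomposition, the main tool being that $|T|$ commutes with $\pi_1$. The first main step is to show that $|T| = (T^*T)^{1/2}$ commutes with each $\pi_1(x)$ on $\dom(T)$. For this, I first pass from the intertwining relation for $T$ to the analogous relation for $T^*$: taking adjoints in $\pi_2(x)^* T = T \pi_1(x)^*$ on $\dom(T)$, and using that each $\pi_i(x)$ is unitary (so that $\pi_i(x)^* = \pi_i(x)^{-1}$ preserves everything in sight), one obtains $T^* \pi_2(x) = \pi_1(x) T^*$ on $\dom(T^*)$, with $\pi_2(x) \dom(T^*) = \dom(T^*)$. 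Composing the two relations on an appropriate domain yields that $T^*T$ commutes with $\pi_1(x)$, i.e., $\pi_1(x)$ is in the commutant of the spectral projections of $T^*T$. By Borel functional calculus, $\pi_1(x)$ therefore commutes with $|T| = (T^*T)^{1/2}$ and leaves $\dom(|T|) = \dom(T)$ invariant.

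The second main step is to identify $U$ as an intertwiner by uniqueness of the polar decomposition. Since $|T|$ commutes with $\pi_1(x)$, the initial space $\mathcal{N}(T)^\perp = \overline{\mathcal{R}(|T|)}$ is $\pi_1(x)$-invariant; since $T$ intertwines, the final space $\overline{\mathcal{R}(T)}$ is $\pi_2(x)$-invariant. For fixed $x \in G$, define
\[
V_x := \pi_2(x)^* U \pi_1(x).
\]
Then $V_x$ is a partial isometry with initial space $\pi_1(x)^{-1} \mathcal{N}(T)^\perp = \mathcal{N}(T)^\perp$ and final space $\pi_2(x)^{-1} \overline{\mathcal{R}(T)} = \overline{\mathcal{R}(T)}$. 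Moreover, on $\dom(T)$, using the commutation $\pi_1(x) |T| = |T| \pi_1(x)$ and the intertwining of $T$,
\[
V_x |T| = \pi_2(x)^* U |T| \pi_1(x) = \pi_2(x)^* T \pi_1(x) = T.
\]
Hence $T = V_x |T|$ is another polar decomposition of $T$, with $V_x$ a partial isometry having initial space $\overline{\mathcal{R}(|T|)}$. By the uniqueness of the polar decomposition, $V_x = U$, which is precisely the isometric intertwining relation $U \pi_1(x) = \pi_2(x) U$.

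The step I expect to require the most care is the first one: manipulating adjoints and compositions of unbounded operators to obtain $T^* \pi_2(x) = \pi_1(x) T^*$ and then $(T^*T) \pi_1(x) = \pi_1(x)(T^*T)$, while keeping track of domains (in particular that $\pi_1(x)$ preserves $\dom(T^*T)$, a core for $|T|$). Once that bookkeeping is settled, the transition to $|T|$ via functional calculus and the closing argument via uniqueness of the polar decomposition are essentially formal.
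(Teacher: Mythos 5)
Your proposal is correct, and its first half coincides with the paper's proof: from $T\pi_1(x) = \pi_2(x)T$ you pass to the adjoint relation, compose to get $T^*T\pi_1(x) = \pi_1(x)T^*T$, and let Borel functional calculus transfer the commutation to $|T| = (T^*T)^{1/2}$. The only cosmetic difference there is that the paper rewrites adjoints via $\pi_i(x)^* = \overline{\sigma(x,x^{-1})}\,\pi_i(x^{-1})$ and checks that the phases cancel, whereas your route---multiplying $\pi_1(x)^{-1}T^* = T^*\pi_2(x)^{-1}$ on the appropriate sides by the unitaries---never needs the cocycle explicitly (it drops out automatically because $\pi_1$ and $\pi_2$ carry the same $\sigma$). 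Where you genuinely diverge is the endgame. The paper argues directly: $U\pi_1(x)|T| = U|T|\pi_1(x) = \pi_2(x)U|T|$, so $U\pi_1(x) - \pi_2(x)U$ vanishes on $\mathcal{R}(|T|)$, which is dense in the initial space $\mathcal{N}(T)^\perp$, and the intertwining follows by continuity. You instead form $V_x = \pi_2(x)^*U\pi_1(x)$, check that it is a partial isometry with initial space $\mathcal{N}(T)^\perp$ (this requires the $\pi_1$-invariance of $\mathcal{N}(T)$, which you correctly extract from the commutation of $|T|$ with $\pi_1$), verify $V_x|T| = T$, and invoke the uniqueness of the polar decomposition, which the paper's preliminaries indeed assert for closed densely defined operators. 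The two endings are equivalent in substance---the standard proof of that uniqueness statement is exactly the paper's density-of-$\mathcal{R}(|T|)$ computation---so your version buys a tidier formal conclusion at the cost of some extra bookkeeping on initial spaces, while the paper's is self-contained at essentially the same length. The domain issues you flag (invariance of $\dom(T^*)$ under $\pi_2(x)$ and of $\dom(T^*T)$ under $\pi_1(x)$, the latter being a core for $|T|$) are the right ones to worry about, and they resolve exactly as you indicate, using unitarity of the $\pi_i(x)$ and the hypothesis that $\dom(T)$ is $\pi_1(G)$-invariant.
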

\begin{proof}
Note that $\pi_i (x)^* = \overline{\sigma(x, x^{-1})} \pi_i (x^{-1})$
and let $\tau(x) := \overline{\sigma(x, x^{-1})} \in \mathbb{T}$ for $x \in G$.
Using that $\pi_1 (x)^* T^* = T^* \pi_2 (x)^*$
for all $x \in G$, a direct calculation entails
\[
T^* T \pi_1 (x) = T^* \pi_2 (x) T = T^* \overline{\tau (x)} \pi_2 (x^{-1})^* T
= \overline{\tau(x)} \pi_1 (x^{-1})^* T^* T
= \pi_1 (x) T^* T,
\]
showing that $T^*T$ intertwines $(\pi_1, \mathcal{H}_{\pi_1})$.
The operator $|T|$ is obtained from $T^*T$ by Borel functional calculus, and thus also
commutes with $(\pi_1, \mathcal{H}_{\pi_1})$, e.g., see
 \cite[Theorem 12.14]{fell1988representations1}.
Using this, it follows directly that
\[
U \pi_1 (x) |T| = U |T| \pi_1 (x) = \pi_2 (x) U |T|,
\]
whence $(U \pi_1 (x) - \pi_2 (x) U)|T| = 0$ for $x \in G$.
Hence $(U \pi_1 (x) - \pi_2 (x) U) \equiv 0$ on $\mathcal{R}(|T|)$.
Since  $\mathcal{R}(|T|)$ is dense in $\mathcal{N}(T)^{\perp} = \overline{\mathcal{R}(|T|)}$,
the desired conclusion follows.
\end{proof}

Mackey-type versions of Schur's lemma for representations
of $*$-algebras can be found in \cite{fell1988representations1}.

\subsection{From cyclic vectors to Parseval frames}
We show the existence of Parseval frames $\CS$
whenever $\pi$ admits a complete vector.

\begin{proposition} \label{prop:cyclic_tight-frame}
Let $(\pi, \Hpi)$
be a square-integrable $\sigma$-representation of a countable discrete group $\Gamma$.
Let $h \in \Hpi$ be arbitrary.
Then there exists $g \in \mathcal{H}_{\pi}$ such that $\CS$
is a Parseval frame for $[\pi(\Gamma)h]$.
In particular, if $\pi$ is cyclic, then there exists a Parseval frame $\CS$ for $\Hpi$.
\end{proposition}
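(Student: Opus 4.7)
The plan is to construct $g$ via the polar decomposition of the (possibly unbounded) coefficient operator $C_{h,\Gamma}$ associated with the given vector $h$. Setting $\mathcal{K} := [\pi(\Gamma) h]$, I would first observe that $\mathcal{N}(C_{h,\Gamma}) = \mathcal{K}^{\perp}$, since $C_{h,\Gamma} f = 0$ iff $\langle f, \pi(\gamma) h \rangle = 0$ for every $\gamma \in \Gamma$. By Proposition \ref{prop:denselydefined}, $C_{h,\Gamma}$ is closed and densely defined, and the covariance identity \eqref{eq:covariance} (which extends from $\Bpi$ to the natural domain $\dom(C_{h,\Gamma})$ because $\lambda_\Gamma^\sigma$ is unitary and therefore preserves the norm $\|C_{h,\Gamma} f\|_{\ell^2}$) shows that $C_{h,\Gamma}$ intertwines $\pi$ and $\lambda_\Gamma^\sigma$ in the sense required by Proposition \ref{prop:intertwiner}.

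Taking the polar decomposition $C_{h,\Gamma} = U |C_{h,\Gamma}|$, Proposition \ref{prop:intertwiner} yields that the partial isometry $U$ intertwines $\pi$ and $\lambda_\Gamma^\sigma$. Its initial space $\mathcal{N}(C_{h,\Gamma})^{\perp}$ is exactly $\mathcal{K}$, so $U|_{\mathcal{K}} : \mathcal{K} \to \ell^2(\Gamma)$ is an \emph{isometry} mapping into a $\lambda_\Gamma^\sigma$-invariant subspace.

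The key step is to realize $U|_{\mathcal{K}}$ as a coefficient operator. The linear functional $\mathcal{K} \ni f \mapsto (U f)(e)$ is bounded, since $|(Uf)(e)| \leq \|Uf\|_{\ell^2} = \|f\|_{\Hpi}$, so the Riesz representation theorem produces a unique $g \in \mathcal{K}$ with $\langle f, g \rangle = (Uf)(e)$ for all $f \in \mathcal{K}$. Combining the intertwining identity $U \pi(\gamma^{-1}) = \lambda_\Gamma^\sigma(\gamma^{-1}) U$ with the elementary cocycle relations $\pi(\gamma^{-1}) = \sigma(\gamma^{-1}, \gamma) \pi(\gamma)^*$ and $\sigma(\gamma^{-1}, \gamma) = \sigma(\gamma, \gamma^{-1})$, I expect a short computation evaluating $(Uf)(\gamma)$ via $(\lambda_\Gamma^\sigma(\gamma^{-1}) Uf)(e)$ to yield the identity $(Uf)(\gamma) = \langle f, \pi(\gamma) g \rangle$ for every $\gamma \in \Gamma$ and $f \in \mathcal{K}$. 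Hence $C_{g,\Gamma}|_{\mathcal{K}} = U|_{\mathcal{K}}$ is an isometry, i.e., $\sum_{\gamma} |\langle f, \pi(\gamma) g \rangle|^2 = \|f\|_{\Hpi}^2$ for all $f \in \mathcal{K}$, so $\pi(\Gamma) g$ is a Parseval frame for $\mathcal{K}$; note that $\pi(\Gamma) g \subset \mathcal{K}$ by $\pi$-invariance of $\mathcal{K}$. The final assertion follows by taking $h$ cyclic, so $\mathcal{K} = \Hpi$.

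The main obstacle is the cocycle bookkeeping in the identification $(Uf)(\gamma) = \langle f, \pi(\gamma) g \rangle$: the $\sigma$-factors picked up from the translation $\lambda_\Gamma^\sigma(\gamma^{-1})$ and from replacing $\pi(\gamma^{-1})$ by $\pi(\gamma)^*$ must cancel exactly, which relies on the aforementioned cocycle symmetry. All other ingredients (closed densely defined coefficient/reconstruction operators, the Mackey--Schur intertwining lemma, and Riesz representation) are already at hand.
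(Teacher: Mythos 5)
Your proof is correct and takes essentially the same route as the paper: Step 1 (polar decomposition of the closed, densely defined $C_{h,\Gamma}$, intertwining via Proposition \ref{prop:intertwiner}, and $\mathcal{N}(C_{h,\Gamma})^{\perp} = [\pi(\Gamma)h]$) is identical, and your Riesz-representation step produces exactly the paper's generator, since $\langle f, g\rangle = (Uf)(e) = \langle f, U^*\delta_e\rangle$ identifies $g$ with $U^* P_{\mathcal{K}}\delta_e$, where $P_{\mathcal{K}}$ projects onto $\overline{\mathcal{R}(C_{h,\Gamma})}$ (the paper instead notes $\lambda_{\Gamma}^{\sigma}(\Gamma)P_{\mathcal{K}}\delta_e$ is a Parseval frame for $\mathcal{K}$ and pulls it back by the unitary $U^*$). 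The cocycle bookkeeping you flagged does cancel exactly: $(Uf)(\gamma) = \overline{\sigma(\gamma^{-1},\gamma)}\,\big(U\pi(\gamma^{-1})f\big)(e)$ together with $\pi(\gamma^{-1}) = \sigma(\gamma^{-1},\gamma)\,\pi(\gamma)^*$ gives $(Uf)(\gamma) = \langle f, \pi(\gamma)g\rangle$ with no residual factor.
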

\begin{proof} We split the proof into two steps.

\textbf{Step 1.} (\emph{Unitary intertwiner}).
For  $h \in \Hpi$,
the map $C_{h,\Gamma} : \dom(C_{h, \Gamma}) \subseteq \mathcal{H}_{\pi} \to \ell^2 (\Gamma)$ is closed and densely defined by Proposition \ref{prop:denselydefined}.
Moreover, $C_{h,\Gamma}$ intertwines $\pi$ and $\lambda^{\sigma}_{\Gamma}$ by the covariance property \eqref{eq:covariance}.
Thus the partial  isometry $U : \mathcal{N} (C_{h,\Gamma})^{\perp} \to \overline{\mathcal{R} (C_{h,\Gamma})}$ of the decomposition $C_{h, \Gamma} = U |C_{h, \Gamma}|$ intertwines $\pi$ and $\lambda^{\sigma}_{\Gamma}$
by Proposition \ref{prop:intertwiner}.
Since $\mathcal{N} (C_{h,\Gamma})^{\perp} = [\pi(\Gamma) h]$,
it follows that
 $U :[\pi(\Gamma) h] \to \overline{\mathcal{R} (C_{h, \Gamma})}$
 is a unitary intertwiner.

\textbf{Step 2.} (\emph{Parseval frame}).
Let $P_{\mathcal{K}} : \ell^2 (\Gamma) \to \ell^2 (\Gamma)$ be the orthogonal projection
onto $\mathcal{K} := \overline{\mathcal{R} (C_{\eta,\Gamma})}$.
Then $P_{\mathcal{K}} \in \lambda^{\sigma}_{\Gamma} (\Gamma)'$ by the projection lemma,
and $\lambda^{\sigma}_{\Gamma} (\Gamma) P_{\mathcal{K}} \delta_{e}
= P_{\mathcal{K}} \lambda_{\Gamma}^{\sigma} (\Gamma) \delta_e$
satisfies
\[
\| f \|_{\Hpi}^2 = \|P_{\mathcal{K}} f \|_{\Hpi}^2 = \sum_{\gamma \in \Gamma} |\langle P_{\mathcal{K}} f, \lambda_{\Gamma}^{\sigma} (\gamma) \delta_e \rangle|^2 = \sum_{\gamma \in \Gamma} |\langle f, \lambda_{\Gamma}^{\sigma} (\gamma) P_{\mathcal{K}} \delta_e \rangle|^2, \quad f \in \mathcal{K} ,
\]
showing that $\lambda^{\sigma}_{\Gamma} (\Gamma) P \delta_{e}$ is a Parseval frame for $\mathcal{K}$.
Since $U$ is unitary, the system $ \pi (\Gamma) U^{*} P \delta_e = U^* \lambda^{\sigma}_{\Gamma} (\Gamma) P \delta_{e}$
is a Parseval frame for the span $[\pi(\Gamma) h]$.
\end{proof}

The construction of the unitary operator in Step 1 above is standard,
e.g., see \cite{rieffel1969square, rieffel2004integrable}.
It is also used, for example, in \cite{gabardo2003frame, barbieri2015riesz, fuehr2005abstract}.

\subsection{From separating vectors to orthonormal sequences}

The following result complements Proposition \ref{prop:cyclic_tight-frame}
with a similar result for separating vectors and orthonormal sequences.
In contrast to Proposition \ref{prop:cyclic_tight-frame},
the result requires the assumption that Kleppner's condition is satisfied.

\begin{proposition} \label{prop:separating_riesz}
Let $(\pi, \Hpi)$
be a square-integrable $\sigma$-representation of a countable discrete group $\Gamma$.
Suppose that $(\Gamma, \sigma)$ satisfies Kleppner's condition and
 that $\pi (\Gamma)''$ admits a separating vector.
Then there exists  $g \in \mathcal{H}_{\pi}$ such that $\CS$
forms an orthonormal sequence in $\mathcal{H}_{\pi}$.
\end{proposition}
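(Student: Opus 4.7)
The plan is to run a ``dual'' version of the argument of Proposition \ref{prop:cyclic_tight-frame}, with the reconstruction operator $D_{h,\Gamma}$ playing the role of the coefficient operator. Given a separating vector $h \in \Hpi$ for $\pi(\Gamma)''$, the operator $D_{h,\Gamma} : \dom(D_{h,\Gamma}) \subseteq \ell^2(\Gamma) \to \Hpi$ is closed and densely defined by Proposition \ref{prop:denselydefined}(ii), and a direct cocycle unfolding will show that $D_{h,\Gamma} \lambda_\Gamma^\sigma(\gamma) = \pi(\gamma) D_{h,\Gamma}$ on $\dom(D_{h,\Gamma})$. Since $\pi(\Gamma) h$ already belongs to $\mathcal{R}(D_{h,\Gamma})$ via finite sequences, the range of $D_{h,\Gamma}$ is dense in $[\pi(\Gamma) h]$. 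Applying Proposition \ref{prop:intertwiner} to the polar decomposition $D_{h,\Gamma} = U |D_{h,\Gamma}|$ will then produce a partial isometry $U : \mathcal{N}(D_{h,\Gamma})^\perp \to [\pi(\Gamma) h]$ satisfying $U \lambda_\Gamma^\sigma(\gamma) = \pi(\gamma) U$.

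The crux, and the step I expect to be hardest, is to show that $\mathcal{N}(D_{h,\Gamma}) = \{0\}$, so that $U$ is defined on all of $\ell^2(\Gamma)$. This is where the separating hypothesis and Kleppner's condition must be combined. My plan is as follows. For $c$ in the kernel, the defining identity reads $\sum_\gamma c_\gamma \langle \pi(\gamma) h, \psi \rangle = 0$ for every $\psi \in \Bpi$. I would first note that $\Bpi$ is invariant under the commutant $\pi(\Gamma)'$, because Bessel bounds are preserved under composition with operators in $\pi(\Gamma)'$. Substituting $\psi \mapsto T^* \psi$ for $T \in \pi(\Gamma)'$ and using $T \pi(\gamma) = \pi(\gamma) T$ then turns the identity into $\sum_\gamma c_\gamma \langle \pi(\gamma)(T h), \psi \rangle = 0$. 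A Cauchy--Schwarz estimate combined with the Bessel bound of $\psi$ will show that, for each fixed $\psi \in \Bpi$, the functional $f \mapsto \sum_\gamma c_\gamma \langle \pi(\gamma) f, \psi \rangle$ is bounded on $\Hpi$; since $h$ is separating, $\pi(\Gamma)' h$ is dense in $\Hpi$, and the functional must therefore vanish on all of $\Hpi$. Specializing to $f = g \in \Bpi$, the unconditionally convergent series $\pi(c) g = \sum_\gamma c_\gamma \pi(\gamma) g$ is then orthogonal to every $\psi \in \Bpi$, so $\pi(c) g = 0$ for all $g \in \Bpi$. Proposition \ref{prop_uniq_op} --- which is precisely where Kleppner's condition enters --- then forces $c = 0$.

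With triviality of the kernel established, $U : \ell^2(\Gamma) \to [\pi(\Gamma) h]$ is an isometric intertwiner of $\lambda_\Gamma^\sigma$ and $\pi$. The choice $g := U \delta_e$ then gives $\pi(\gamma) g = U \lambda_\Gamma^\sigma(\gamma) \delta_e = U \delta_\gamma$, using the identity $\lambda_\Gamma^\sigma(\gamma) \delta_e = \delta_\gamma$; the isometry of $U$ together with the orthonormality of $\{\delta_\gamma\}_{\gamma \in \Gamma}$ in $\ell^2(\Gamma)$ immediately yields that $\pi(\Gamma) g$ is an orthonormal sequence in $\Hpi$. The remaining verifications --- the intertwining identity for $D_{h,\Gamma}$ and the evaluation $\lambda_\Gamma^\sigma(\gamma) \delta_e = \delta_\gamma$ --- are routine cocycle bookkeeping, so the substantive part of the argument is concentrated in the kernel-triviality step above.
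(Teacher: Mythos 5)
Your proposal is correct, and its outer skeleton coincides with Steps 1 and 3 of the paper's proof: $D_{h,\Gamma}$ is closed and densely defined by Proposition \ref{prop:denselydefined}, it intertwines $\lambda^{\sigma}_{\Gamma}$ and $\pi$, and once its kernel is trivial, Proposition \ref{prop:intertwiner} plus the choice $g := U\delta_e$ finishes the argument exactly as in the paper. Where you genuinely depart from the paper is at the crux, the injectivity of $D_{h,\Gamma}$. The paper's Step 2 introduces, for $c$ in the kernel, the auxiliary closed operator $\widetilde{\pi}(c) : g \mapsto D_{g,\Gamma}c$, shows it commutes with every $T \in \pi(\Gamma)'$, passes to its polar decomposition $\widetilde{\pi}(c) = J|\widetilde{\pi}(c)|$ to place $J \in \pi(\Gamma)''$ (which requires the commutation theorem for polar decompositions of unbounded operators, cited to Kadison--Ringrose), deduces $Jh = 0$ from the factorization $0 = \widetilde{\pi}(c)h = |\widetilde{\pi}(c)^*|Jh$, and only then uses the separating property to kill $J$ and hence $\widetilde{\pi}(c)$, before invoking Proposition \ref{prop_uniq_op}. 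You instead use the separating hypothesis in its equivalent form $[\pi(\Gamma)'h] = \Hpi$ (Section \ref{sec_comp_sep}), transport the kernel relation along the commutant by testing against $T^*\psi$ (legitimate, since $\pi(\Gamma)'$ is self-adjoint and $\Bpi$ is commutant-invariant, as you note), and conclude by density that the bounded functional $f \mapsto \sum_{\gamma} c_\gamma \langle \pi(\gamma)f, \psi\rangle$ vanishes identically; Proposition \ref{prop_uniq_op} then enters identically in both proofs. This buys a genuine simplification: no auxiliary operator, no closedness verification, and no unbounded functional calculus --- only bounded functionals and density. One point you should make explicit in the boundedness estimate: the Bessel bound of $\psi$ does not apply verbatim, since the relevant coefficients are $\langle \pi(\gamma)f, \psi\rangle$ rather than $\langle f, \pi(\gamma)\psi\rangle$; use $\pi(\gamma)^* = \overline{\sigma(\gamma,\gamma^{-1})}\,\pi(\gamma^{-1})$ and reindex $\gamma \mapsto \gamma^{-1}$ to get $\sum_{\gamma}|\langle \pi(\gamma)f,\psi\rangle|^2 \leq B_\psi \|f\|^2$. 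A second, harmless imprecision: identifying the final space of $U$ with $[\pi(\Gamma)h]$ requires also the converse inclusion $\mathcal{R}(D_{h,\Gamma}) \subseteq [\pi(\Gamma)h]$ (test against Bessel vectors in $[\pi(\Gamma)h]^{\perp}$, which are dense there because the projection onto that complement lies in $\pi(\Gamma)'$ and preserves $\Bpi$); the paper sidesteps this by writing the final space as $\overline{\mathcal{R}(D_{h,\Gamma})}$, and in any case orthonormality of $\pi(\Gamma)g$ in $\Hpi$ needs only that $U$ is isometric on all of $\ell^2(\Gamma)$.
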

\begin{proof}
For an arbitrary $\eta \in \Hpi$,
the map $D_{\eta, \Gamma}  : \dom(D_{\eta, \Gamma}) \to \mathcal{H}_{\pi}$
is a closed, densely defined operator by Proposition \ref{prop:denselydefined}.
The proof will be split into three steps:

\textbf{Step 1.} (Auxiliary operator $\widetilde{\pi}(c)$).
For a fixed $c \in \ell^2 (\Gamma)$, consider the auxiliary operator
$\widetilde{\pi}(c) : \dom(\widetilde{\pi}(c)) \to \mathcal{H}_{\pi}$,
with domain
\[
\dom(\widetilde{\pi} (c) ) :=
\bigg\{ g \in \mathcal{H}_{\pi} \;   | \; \exists f \in \mathcal{H}_{\pi} \; : \; \sum_{\gamma \in \Gamma} c_{\gamma} \langle \pi (\gamma) g, h \rangle \
= \langle f, h \rangle, \; \forall h \in \mathcal{B}_{\pi} \bigg\},
\]
defined by  $\widetilde{\pi} (c) g = D_{g, \Gamma} c$.
Note that $\mathcal{B}_{\pi} \subseteq \dom(\widetilde{\pi} (c))$
and hence $\widetilde{\pi} (c)$ is densely defined.
A similar argument as in part (ii) of
Proposition \ref{prop:denselydefined}
 shows that $\widetilde{\pi} (c)$ is a closed operator.

\textbf{Step 2.} ($D_{\eta, \Gamma}$ is injective for separating $\eta$).
We show that $D_{\eta, \Gamma}$ is injective if $\eta \in \Hpi$ is separating
for $\pi(\Gamma)''$.
For this,
let $c \in \dom(D_{\eta, \Gamma})$ be such that $D_{\eta, \Gamma} c = 0$.
Then, $\eta \in \dom(\widetilde{\pi}(c))$.
Let $T \in \pi(\Gamma)'$, $g \in \dom(\widetilde{\pi}(c))$ and $h \in \Bpi$ be arbitrary. Then
\[
\langle T \widetilde{\pi}(c) g, h \rangle
=
\sum_{\gamma \in \Gamma} c_{\gamma} \langle \pi(\gamma)  g, T^* h \rangle
=
\sum_{\gamma \in \Gamma} c_{\gamma} \langle \pi(\gamma) T g, h \rangle
=
\big\langle \widetilde{\pi}(c) T g, h \big\rangle,
\]
and $Tg \in \dom(\widetilde{\pi}(c))$. Hence, $\widetilde{\pi}(c) Tg = T \widetilde{\pi}(c)g$ by density of $\mathcal{B}_{\pi}$.
Let $\widetilde{\pi} (c) = J |\widetilde{\pi} (c)|  $ be the polar decomposition of $\widetilde{\pi} (c)$.
Then also $T J = J T$  by Borel functional calculus, see, e.g., \cite[Theorem 6.1.11]{kadison1986fundamentals2}, and thus $J \in \pi (\Gamma)''$. Since $0 = D_{\eta, \Gamma} c = \widetilde{\pi} (c) \eta = |(\widetilde{\pi}(c))^*|J \eta$, it follows that both $J \eta \in \overline{\mathcal{R} (\widetilde{\pi} (c))}$ and $J \eta \in
\mathcal{N}(|(\widetilde{\pi} (c))^*|) = \mathcal{R}(\widetilde{\pi}(c))^{\perp}$,
and thus $J\eta = 0$. The separateness of $\eta \in \mathcal{H}_{\pi}$ yields $J = 0$, hence $\widetilde{\pi} (c) = 0$.
In particular,
\[
\sum_{\gamma \in \Gamma} c_{\gamma} \langle \pi(\gamma) g, h \rangle = 0
\]
for all $g, h \in \Bpi$.
By Proposition \ref{prop_uniq_op}, it follows that $c = 0$.
Thus $D_{\eta, \Gamma}$ is injective.

\textbf{Step 3.} (Isometric intertwiner).
Since $D_{\eta, \Gamma} : \dom(D_{\eta, \Gamma}) \to \Hpi$
intertwines $\lambda_{\Gamma}^{\sigma}$ and $\pi$,
it follows by Proposition \ref{prop:intertwiner}
that $U : \ell^2 (\Gamma) = \mathcal{N} (D_{\eta, \Gamma})^{\perp} \to \overline{\mathcal{R} (D_{\eta, \Gamma})}$
in the polar decomposition $D_{\eta, \Gamma} = U |D_{\eta, \Gamma}|$
intertwines $\lambda^{\sigma}_{\Gamma}$ and $\pi$.
Therefore, the system
$ \pi(\Gamma) U \delta_e =  U \lambda^{\sigma}_{\Gamma} (\Gamma) \delta_e $ is an orthonormal basis for $\overline{\mathcal{R} (D_{\eta, \Gamma})}$,
thus an orthonormal sequence in $\mathcal{H}_{\pi}$.
\end{proof}

\section{Expansions in the von Neumann algebra}
\label{sec_exps}

\subsection{Expansions} \label{sec:expansions}
The following theorem provides, under Kleppner's condition, a Fourier-type series expansion
for every operator in $\pi(\Gamma)''$.

\begin{theorem}
\label{th_series}
Let $(\pi, \Hpi)$ be a square-integrable $\sigma$-representation of a countable discrete group $\Gamma$.
Suppose that $(\Gamma, \sigma)$ satisfies Kleppner's condition.
Then, for every operator $T \in \pi(\Gamma)''$, there exists a unique
$c \in \ell^2(\Gamma)$ such that $T = \pi(c)$ on $\mathcal{B}_{\pi}$, i.e.,
\begin{align}
\label{eq_Tf}
T f = \sum_{\gamma \in \Gamma} c_\gamma \pi(\gamma) f, \quad f \in \Bpi.
\end{align}
\end{theorem}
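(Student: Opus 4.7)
The plan is to combine Kaplansky's density theorem with the uniqueness furnished by Proposition \ref{prop_uniq_op}, extracting a weak $\ell^2$--limit of the coefficient sequences of a suitable approximating net.

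\textbf{Uniqueness.} If $c_1, c_2 \in \ell^2(\Gamma)$ both satisfy $T = \pi(c_i)$ on $\mathcal{B}_\pi$, then $\pi(c_1 - c_2) \equiv 0$ on $\mathcal{B}_\pi$, and Proposition \ref{prop_uniq_op} forces $c_1 = c_2$.

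\textbf{Existence: Kaplansky approximation.} Because $\pi(\gamma)^* = \overline{\sigma(\gamma,\gamma^{-1})}\pi(\gamma^{-1})$, the $*$-algebra generated by $\pi(\Gamma)$ equals $\mathrm{span}\{\pi(\gamma) : \gamma \in \Gamma\}$. Kaplansky's density theorem (Section \ref{sec_vn}) then yields a net $(T_\alpha)$ of the form $T_\alpha = \pi(c^{(\alpha)})$ with $c^{(\alpha)} \in c_{00}(\Gamma)$, $\|T_\alpha\|_{B(\Hpi)} \leq \|T\|$, and $T_\alpha \to T$ in SOT. For every $f \in \mathcal{B}_\pi$ and $h \in \Hpi$, the sequence $C_{f,\Gamma}(h) = (\langle h, \pi(\gamma) f\rangle)_\gamma$ lies in $\ell^2(\Gamma)$, and a direct calculation gives the pairing identity
\[
\langle T_\alpha f, h \rangle \;=\; \sum_{\gamma \in \Gamma} c_\gamma^{(\alpha)} \langle \pi(\gamma) f, h\rangle \;=\; \langle c^{(\alpha)}, C_{f,\Gamma}(h)\rangle_{\ell^2(\Gamma)} \;\xrightarrow[\alpha]{}\; \langle Tf, h\rangle.
\]

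\textbf{Density and candidate functional.} Proposition \ref{prop_uniq_op} implies that the span $\mathcal{S} := \mathrm{span}\{C_{f,\Gamma}(h) : f \in \mathcal{B}_\pi, h \in \Hpi\}$ is dense in $\ell^2(\Gamma)$: any $d \in \mathcal{S}^\perp$ yields $\pi(\overline{d}) \equiv 0$ on $\mathcal{B}_\pi$, hence $d = 0$. Define $L \colon \mathcal{S} \to \mathbb{C}$ by the linear extension of $L(C_{f,\Gamma}(h)) := \overline{\langle Tf,h\rangle}$; this is well-defined by the same density argument applied linearly, and the pairing identity above shows $L(v) = \lim_\alpha \langle v, c^{(\alpha)}\rangle$ for $v \in \mathcal{S}$.

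\textbf{Producing $c$ and verification.} The central step is to show that $L$ is bounded on $\mathcal{S}$, for then Riesz representation produces a unique $c \in \ell^2(\Gamma)$ with $L(v) = \langle v, c\rangle_{\ell^2}$, and the pairing identity yields
\[
\langle Tf,h\rangle \;=\; \overline{L(C_{f,\Gamma}(h))} \;=\; \sum_{\gamma} c_\gamma \langle \pi(\gamma) f, h\rangle \;=\; \langle \pi(c) f, h\rangle
\]
for $f \in \mathcal{B}_\pi, h \in \Hpi$, giving $Tf = \pi(c)f$. To obtain boundedness, my plan is to exploit Proposition \ref{prop:cyclic_tight-frame}: on any cyclic subspace $\mathcal{K} = [\pi(\Gamma)\eta]$ (which is preserved by $T$, since $P_\mathcal{K} \in \pi(\Gamma)'$), pick a Parseval frame vector $g \in \mathcal{K}$, so that $C_{g,\Gamma}\!\!\upharpoonright_{\mathcal{K}}$ is an isometry. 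The covariance identity $C_{g,\Gamma} \pi(\gamma) = \lambda^\sigma_\Gamma(\gamma) C_{g,\Gamma}$ gives $C_{g,\Gamma}(T_\alpha g) = \lambda^\sigma_\Gamma(c^{(\alpha)}) v_g$ with $v_g := C_{g,\Gamma}(g)$, and this quantity is uniformly controlled by $\|T\|\|g\|$. Decomposing $\Hpi$ as an orthogonal sum of cyclic subspaces and choosing test vectors accordingly furnishes enough estimates on $L$ on the image $C_{g,\Gamma}(\mathcal{K})$ for various $g$, which together span $\mathcal{S}$.

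\textbf{Main obstacle.} The delicate point is precisely this boundedness step: the operator norm bound $\|T_\alpha\| \leq \|T\|$ does not automatically control $\|c^{(\alpha)}\|_{\ell^2}$ (as it would if one had a single orthonormal orbit via a separating vector, which is not available in general without the density assumption). One has to combine Parseval frame estimates on each cyclic subspace with the density of $\mathcal{S}$, using Kleppner's condition through Proposition \ref{prop_uniq_op} to ensure that the patched estimates consistently define a single bounded functional on $\ell^2(\Gamma)$.
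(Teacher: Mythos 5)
Your uniqueness argument and the overall skeleton (Kaplansky net $T_\alpha = \pi(c^{(\alpha)})$, weak $\ell^2$-limit of the coefficient sequences, pairing against $C_{f,\Gamma}(h) \in \ell^2(\Gamma)$ for $f \in \Bpi$) coincide with the paper's Step 2, and your density observation for $\mathcal{S}$ via Proposition \ref{prop_uniq_op} is correct. However, there is a genuine gap exactly where you flag it: the boundedness of $L$, equivalently a uniform bound $\sup_\alpha \|c^{(\alpha)}\|_{\ell^2} < \infty$, is never actually proved, and the patching strategy you sketch cannot work as stated. For a Parseval frame vector $g$ of a cyclic subspace $\mathcal{K}$, the synthesis operator $D_{g,\Gamma}$ is a partial isometry with initial space $\overline{\mathcal{R}(C_{g,\Gamma})}$, so the estimate $\|T_\alpha g\| \leq \|T\|\|g\|$ only controls $\|P\, c^{(\alpha)}\|$, where $P$ is the orthogonal projection onto the (in general proper) subspace $\overline{\mathcal{R}(C_{g,\Gamma})} \subseteq \ell^2(\Gamma)$. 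Bounds for a functional on a family of non-orthogonal subspaces do not patch into a bound on their span, even when that span is dense: density of $\mathcal{S}$ gives no quantitative lower bound such as a covering inequality $\sum_j w_j P_j \geq \epsilon I$ for the ranges. Moreover, if you decompose $\Hpi$ into countably many cyclic subspaces with Parseval vectors $g_j$, the natural estimate $\sum_j \|T_\alpha g_j\|^2 \leq \|T\|^2 \sum_j \|g_j\|^2$ is useless because $\sum_j \|g_j\|^2$ typically diverges (each $\|g_j\|$ is only bounded by $1$, not summable). Kleppner's condition, invoked in your final sentence as the glue, enters through Proposition \ref{prop_uniq_op} only qualitatively (injectivity/density) and cannot supply the missing quantitative estimate.

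The paper closes precisely this hole with a different device (its Step 1): pass to the countable direct sum $\newH = \bigoplus_{k} \Hpi$ with the diagonal representation $\newpi$, whose von Neumann algebra consists of diagonal amplifications of elements of $\pi(\Gamma)''$. A dense sequence $(\eta_k)_k$ in $\Hpi$ is then a \emph{separating} vector for $(\newpi(\Gamma))''$, and since the Bessel vectors of the direct sum are dense, Proposition \ref{prop:separating_riesz} (this is where Kleppner's condition does real work for existence) yields a single vector $g = (g_k)_k \in \newH$ whose orbit $\newpi(\Gamma)g$ is \emph{orthonormal}. This converts the synthesis map into a global isometry, $\|c\|_{\ell^2}^2 = \sum_{k} \big\| \sum_{\gamma} c_\gamma \pi(\gamma) g_k \big\|_{\Hpi}^2$ with $\sum_k \|g_k\|^2 = 1$, whence $\|c^{(\alpha)}\|_{\ell^2}^2 = \sum_k \|T_\alpha g_k\|^2 \leq \sup_{\alpha'}\|T_{\alpha'}\|_{op}^2$ uniformly. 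Banach--Alaoglu then produces the weak limit $c$, and your own pairing computation finishes the proof. In short: replace the attempt to patch Parseval estimates on cyclic pieces of $\Hpi$ by the amplification trick, which manufactures the single orthonormal orbit you correctly identified as the missing ingredient.
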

\begin{proof}
The uniqueness claim follows from Proposition \ref{prop_uniq_op}.
For the existence claim, consider the space
\begin{align}
\mathcal{A} := \left\{  \pi(c) = \sum_{\gamma \in \Gamma} c_{\gamma} \pi(\gamma) \; \bigg| \;  c \in c_{00} (\Gamma)
\right\} \subset \pi(\Gamma)'',
\end{align}
where $c_{00}(\Gamma) \subset \mathbb{C}^{\Gamma}$ are finite sequences on $\Gamma$.
The space $\mathcal{A}$ is a self-adjoint algebra containing $\pi(\Gamma) \subset \mathcal{U}(\Hpi)$.
By von Neumann's density theorem, the von Neumann algebra $\pi(\Gamma)''$
is the SOT closure of $\mathcal{A}$.
To provide \eqref{eq_Tf} for arbitrary $T \in \pi(\Gamma)''$, we first construct a vector-valued orthonormal sequence.

\textbf{Step 1.} (Existence of vector-valued orthonormal sequence)
Let $\eta = (\eta_k)_{ k \in \mathbb{N}}$
be a sequence of vectors $\eta_k \in \Hpi$
such that $\{\eta_k  :  k \in \mathbb{N} \}$ is norm dense in $\Hpi$.
Consider the direct sum $\newH := \bigoplus_{n \in \mathbb{N}} \Hpi = \ell^2(\mathbb{N}, \Hpi)$
and the associated direct sum $\sigma$-representation $(\newpi, \newH)$ of $\Gamma$, given by
\begin{align*}
\newpi(\gamma) f = (\pi(\gamma) f_k)_{k \in \mathbb{N}}, \quad f = (f_k)_{k \in \mathbb{N}} \in \newH.
\end{align*}
The associated von Neumann algebra $(\newpi(\Gamma))''$ consists of operators
$T \in \mathcal{B}(\newH)$ acting as
\begin{align}\label{eq_aaaaa}
T (f_k)_{k \in \mathbb{N}} := (A f_k)_{k \in \mathbb{N}},
\end{align}
for some $A \in \pi(\Gamma)''$.

We claim that $\eta = (\eta_k)_{k \in \mathbb{N}}$ is a separating vector for $(\newpi(\Gamma))''$. Indeed, if $T \in (\newpi(\Gamma))''$ annihilates $\eta$, then, for $A$ as in \eqref{eq_aaaaa},
$A \eta_k = 0$ for all $k \in \mathbb{N}$, and, by density, $A=0$, which implies $T=0$.

The space of Bessel vectors $\mathcal{B}_{\oplus \pi}$ of the direct sum $(\newpi, \newH)$
is norm dense in $\newH$ since it contains
$\{ (\delta_{j,k} h)_{k \in \mathbb{N}} \; : \; h \in \Bpi,\; j \in \mathbb{N} \}$.
Therefore,  Proposition \ref{prop:separating_riesz} is applicable to obtain a vector
$g = (g_k)_{k \in \mathbb{N}} \in \newH$ such that $(\newpi(\Gamma) )g$ is orthonormal in $\newH$.
Hence,
\begin{align} \label{eq:vector-valued_orthonormal}
\norm{c}_2^2 = \Bignorm{\sum_{\gamma \in \Gamma} c_\gamma \pi(\gamma) g}_{\newH}^2
= \sum_{k \in \mathbb{N}} \Bignorm{\sum_{\gamma \in \Gamma} c_\gamma \pi(\gamma) g_k}_{\Hpi}^2
\end{align}
for all $c \in \ell^2(\Gamma)$, and, in particular,
\begin{align} \label{eq:vector-valued_norm1}
\norm{g}^2_{\newH}=
\sum_{k \in \mathbb{N}} \| g_k \|_{\Hpi}^2 = 1.
\end{align}

\textbf{Step 2.} (Strong closure of $\mathcal{A}$).
Let $T \in \pi(\Gamma)''$. By von Neumann's and Kaplansky's density theorem,
there exists a bounded net $(T_{\alpha} )_{\alpha \in \Lambda}$
of operators $T_{\alpha} \in
\mathcal{A}$ such that $T_{\alpha} \sotconv T$.
Let $g = (g_k)_{  k \in \mathbb{N}} \in \newH$ be as in Step 1 satisfying \eqref{eq:vector-valued_orthonormal} and \eqref{eq:vector-valued_norm1}.
Select sequences $c^{(\alpha)} \in c_{00} (\Gamma) \subset \ell^2(\Gamma)$ such that $T_{\alpha} = \pi(c^{(\alpha)})$.
Then, for each $\alpha \in \Lambda$,
\begin{align*}
\norm{c^{(\alpha)}}_2 &=
\sum_{k \in \mathbb{N}} \Bignorm{\sum_{\gamma \in \Gamma} c^{(\alpha)}_\gamma \pi(\gamma) g_k}_{\Hpi}^2
=
\sum_{k \in \mathbb{N}} \bignorm{T_{\alpha} g_k}_{\Hpi}^2 \\
&\leq \norm{T_{\alpha}}_{op}^2 \sum_{k \in \mathbb{N}} \norm{g_k}_{\Hpi}^2 \leq \sup_{\alpha' \in \Lambda} \norm{T_{\alpha'}}_{op}^2 < \infty.
\end{align*}
By the Banach-Alaoglu theorem, we may pass to a subnet and assume that $c^{(\alpha)} \weakconv c$ for some $c \in \ell^2(\Gamma)$.

Let $f \in \mathcal{B}_\pi$ and $h \in \Hpi$ be arbitrary. Then $(\ip{\pi(\gamma) f}{h})_{\gamma \in \Gamma} \in \ell^2(\Gamma)$, and, thus,
\begin{align*}
\ip{T_{\alpha} f}{h} = \sum_{\gamma \in \Gamma} c^{(\alpha)}_\gamma \ip{\pi(\gamma) f}{h}
\longrightarrow \sum_{\gamma \in \Gamma} c_\gamma \ip{\pi(\gamma) f}{h}.
\end{align*}
On the other hand $\ip{T_{\alpha} f}{h} \longrightarrow \ip{Tf}{h}$. Hence, $\pi(c) = T$ on $\mathcal{B}_\pi$, as desired.
\end{proof}

\subsection{Coherent Riesz sequences are generated by separating vectors}
As a first application of Theorem \ref{th_series}, we show the following.
\begin{proposition}
\label{prop_riesz_sep}
Let $(\pi, \Hpi)$ be a square-integrable $\sigma$-representation of a countable discrete group $\Gamma$.
Suppose that $(\Gamma, \sigma)$ satisfies Kleppner's condition. If $\pi(\Gamma)g$ is a Riesz sequence in $\Hpi$, then $g$ is separating for $\pi(\Gamma)''$.
\end{proposition}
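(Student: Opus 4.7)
The plan is to use Theorem \ref{th_series} together with the lower Riesz bound. Recall that $g$ is separating for $\pi(\Gamma)''$ precisely when the evaluation map $\pi(\Gamma)'' \ni T \mapsto Tg \in \mathcal{H}_\pi$ is injective, so I only need to show that $Tg = 0$ forces $T = 0$ for $T \in \pi(\Gamma)''$.

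First I would observe that a Riesz sequence is automatically a Bessel sequence (upper Riesz bound coincides with the Bessel bound), so $g \in \mathcal{B}_\pi$. This is the small but crucial point that makes the series expansion applicable directly to $g$ itself.

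Next, fix $T \in \pi(\Gamma)''$ with $Tg = 0$. By Theorem \ref{th_series}, there is a unique $c \in \ell^2(\Gamma)$ such that
\[
T f = \sum_{\gamma \in \Gamma} c_\gamma \pi(\gamma) f, \qquad f \in \mathcal{B}_\pi.
\]
Since $g \in \mathcal{B}_\pi$, this expansion gives $\sum_{\gamma \in \Gamma} c_\gamma \pi(\gamma) g = Tg = 0$ in $\mathcal{H}_\pi$. The lower Riesz bound then forces $\|c\|_{\ell^2}^2 \leq A^{-1} \|Tg\|^2 = 0$, that is, $c = 0$.

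Hence $T \equiv 0$ on the dense subspace $\mathcal{B}_\pi$, and by boundedness $T = 0$ on all of $\mathcal{H}_\pi$. This shows $g$ is separating for $\pi(\Gamma)''$. There is no real obstacle here: once Theorem \ref{th_series} is available, the argument is essentially a one-line comparison of two expressions for $Tg$, combined with the lower Riesz inequality. The only conceptual point to flag is that without Kleppner's condition, Theorem \ref{th_series} (and in turn Proposition \ref{prop_uniq_op}) would fail, which explains the hypothesis in the statement.
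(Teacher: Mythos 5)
Your proof is correct and follows essentially the same route as the paper: both apply Theorem \ref{th_series} to expand $T = \pi(c)$, use the fact that a Riesz sequence is Bessel so that $g \in \mathcal{B}_\pi$, and then invoke the lower Riesz bound to conclude $c = 0$ and hence $T = 0$. Your additional remarks (the explicit density argument for $T=0$ on all of $\Hpi$, and the role of Kleppner's condition) are accurate elaborations of what the paper leaves implicit.
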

\begin{proof}
Suppose that $\pi(\Gamma)g$ is a Riesz sequence in $\Hpi$ and assume that $T \in \pi(\Gamma)''$ annihilates $g$. By Theorem \ref{th_series}, there exists a sequence $c \in \ell^2(\Gamma)$ such that $T=\pi(c)$. Since $\pi(\Gamma)g$ is a Riesz sequence, we have $g \in \mathcal{B}_\pi$, and, therefore,
\begin{align*}
0= \norm{T g}^2_{\Hpi} = \Bignorm{\sum_{\gamma \in \Gamma} c_\gamma \pi(\gamma) g}_{\Hpi}^2 \asymp \norm{c}_{\ell^2(\Gamma)}^2.
\end{align*}
Thus $c = 0$, and, therefore, $T = 0$, as desired.
\end{proof}

\subsection{Doubly invariant subspaces}
As a second application of Theorem \ref{th_series}, we show that $\Hpi$ does not admit so-called doubly invariant subspaces.
\begin{proposition}
\label{prop_no_double}
Let $(\pi, \Hpi)$ be a square-integrable $\sigma$-representation of a countable discrete group $\Gamma$.
Suppose that $(\Gamma, \sigma)$ satisfies Kleppner's condition.
Let $\mathcal{K} \leq \Hpi$ be a closed subspace that is invariant under $\pi(\Gamma)$ and $\pi(\Gamma)'$. Then
$\mathcal{K}=\{0\}$ or $\mathcal{K}=\Hpi$.
\end{proposition}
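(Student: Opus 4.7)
The plan is to show that the orthogonal projection $P_{\mathcal{K}}$ onto the doubly invariant subspace $\mathcal{K}$ must be a scalar multiple of the identity, so that $\mathcal{K} \in \{\{0\}, \Hpi\}$. The main mechanism is the Fourier-type expansion provided by Theorem \ref{th_series}, combined with the translation–invariance analysis already developed in the proof of Proposition \ref{prop_uniq_op}.

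First, I would use the projection lemma from Section \ref{sec_vn}: the invariance of $\mathcal{K}$ under $\pi(\Gamma)$ gives $P_{\mathcal{K}} \in \pi(\Gamma)'$, while invariance under $\pi(\Gamma)'$ gives $P_{\mathcal{K}} \in \pi(\Gamma)''$. Hence $P_{\mathcal{K}}$ lies in the center $\pi(\Gamma)' \cap \pi(\Gamma)''$. Applying Theorem \ref{th_series} to $P_{\mathcal{K}} \in \pi(\Gamma)''$, there exists a unique $c \in \ell^2(\Gamma)$ with $P_{\mathcal{K}} = \pi(c)$ on $\mathcal{B}_\pi$.

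Next, the fact that $P_{\mathcal{K}}$ commutes with every $\pi(\gamma)$ translates, via Lemma \ref{lem:conjugate}, into the invariance condition $\vartheta_{\Gamma}^{\sigma}(\gamma) c = c$ for all $\gamma \in \Gamma$, which is exactly equation \eqref{eq:d_constant} for $c$. From here I would reuse Steps 3 and 4 from the proof of Proposition \ref{prop_uniq_op} verbatim: any $\gamma' \in \Gamma$ with $c_{\gamma'} \neq 0$ must be $\sigma$-regular, and then Kleppner's condition forces the conjugacy class $C_{\gamma'}$ to be infinite unless $\gamma' = e$; but $|c|$ is constant on $C_{\gamma'}$ and $c \in \ell^2(\Gamma)$, so $c_{\gamma'} = 0$ for all $\gamma' \neq e$. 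Thus $c = c_e \, \delta_e$ for some scalar $c_e \in \mathbb{C}$.

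Consequently, $P_{\mathcal{K}} = c_e I$ on the dense subspace $\mathcal{B}_\pi$, and by boundedness on all of $\Hpi$. Since $P_{\mathcal{K}}$ is an orthogonal projection, $c_e \in \{0,1\}$, which yields $\mathcal{K} = \{0\}$ or $\mathcal{K} = \Hpi$. I do not anticipate a serious obstacle: the real work has already been done in Proposition \ref{prop_uniq_op} and Theorem \ref{th_series}, and the only point requiring a little care is identifying $P_{\mathcal{K}}$ as an element of the center so that the series representation is available and, simultaneously, the commutation with $\pi(\Gamma)$ gives the required $\vartheta_{\Gamma}^{\sigma}$-invariance of $c$.
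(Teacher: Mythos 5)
Your proposal is correct and follows essentially the same route as the paper's own proof: the projection lemma places $P_{\mathcal{K}}$ in $\pi(\Gamma)'' \cap \pi(\Gamma)'$, Theorem \ref{th_series} gives $P_{\mathcal{K}} = \pi(c)$, Lemma \ref{lem:conjugate} combined with the uniqueness of the expansion yields the $\vartheta_{\Gamma}^{\sigma}$-invariance of $c$, and Steps 3--4 of the proof of Proposition \ref{prop_uniq_op} together with Kleppner's condition force $c = c_e \delta_e$, so that $P_{\mathcal{K}} \in \{0, I_{\Hpi}\}$. The only (harmless) cosmetic difference is that you spell out the final step that $c_e I$ is a projection only for $c_e \in \{0,1\}$, which the paper leaves implicit.
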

\begin{proof}
Consider the orthogonal projection $P_{\mathcal{K}} : \Hpi \to \Hpi$
onto $\mathcal{K}$. Since $\mathcal{K}$ is $\pi(\Gamma)'$-invariant,
it follows that
$P_{\mathcal{K}} \in \pi(\Gamma)''$ by the projection lemma.
Theorem \ref{th_series} then yields a unique
sequence $c \in \ell^2(\Gamma)$ such that
\begin{align} \label{eq:expansion_doubly}
P_{\mathcal{K}} = \pi (c) =  \sum_{\gamma' \in \Gamma} c_{\gamma'} \pi(\gamma')
\end{align}
as an operator on $\Bpi$.
Since $\mathcal{K}$ is also $\pi(\Gamma)$-invariant, it follows also that $P_{\mathcal{K}} \in \pi(\Gamma)'$.
Therefore $P_{\mathcal{K}} = \pi(\gamma) P_{\mathcal{K}} \pi(\gamma)^*$ for all $\gamma \in \Gamma$.
By Lemma \ref{lem:conjugate},
\begin{align*}
 \pi(\gamma) P_{\mathcal{K}}
\pi(\gamma)^{*}
= \sum_{\gamma' \in \Gamma} (\vartheta_{\Gamma}^{\sigma} (\gamma) c)_{\gamma'}  \pi(\gamma')
\end{align*}
The uniqueness of the expansion \eqref{eq:expansion_doubly} shows that
\begin{align} \label{eq:sequence_constantconjugacy2}
c_{\gamma'} = (\vartheta_{\Gamma}^{\sigma} (\gamma) c)_{\gamma'}
= \overline{\sigma(\gamma^{-1}, \gamma')} \sigma(\gamma^{-1} \gamma' \gamma, \gamma^{-1}) c_{\gamma^{-1} \gamma' \gamma}
\end{align}
for all $\gamma, \gamma' \in \Gamma$.
Thus $|c|$ is constant on conjugacy classes.
We now use Kleppner's condition together with the fact that $c \in \ell^2 (\Gamma)$, as in Steps 3 and 4 of the proof of Proposition \ref{prop_uniq_op},
to conclude that $c_\gamma=0$, for $\gamma \in \Gamma \setminus \{e\}$.
This shows that either $P_{\mathcal{K}} = 0$ or $P_{\mathcal{K}} = I_{\Hpi}$,
as claimed.
\end{proof}

\begin{remark}
Proposition \ref{prop_no_double} shows that, under Kleppner's condition, the center $\pi (\Gamma)'' \cap \pi(\Gamma)'$ of the algebra $\pi(\Gamma)''$ does not contain non-trivial projections, and thus equals $\mathbb{C} I_{\Hpi}$.
In technical terms: The von Neumann algebra $\pi(\Gamma)''$ is a \emph{factor}.
Kleppner's condition is also necessary for $\pi(\Gamma)''$ to be a factor.
Indeed, if $C_{\gamma_0}$ is a finite non-trivial $\sigma$-regular conjugacy class, then
the sequence $c \in \ell^2 (\Gamma)$ defined by
\[
c_{\gamma'} =
\begin{cases}
\sigma(\gamma, \gamma_0) \overline{\sigma(\gamma \gamma_0 \gamma^{-1}, \gamma)},
\quad & \text{if} \; \gamma' \in C_{\gamma_0}, \; \; \gamma' = \gamma \gamma_0 \gamma^{-1} \\
0, & \text{if} \; \gamma' \notin C_{\gamma_0}
\end{cases}
\]
is well-defined and satisfies $c_{\gamma'} = \overline{\sigma(\gamma^{-1}, \gamma')} \sigma(\gamma^{-1} \gamma' \gamma, \gamma^{-1}) c_{\gamma^{-1} \gamma' \gamma}$ for all $\gamma' \in C_{\gamma_0}$ and $\gamma \in \Gamma$,
and by Lemma
\ref{lem:conjugate}, one can see that
\begin{align*}
T := \sum_{\gamma' \in C_{\gamma_0}} c_{\gamma'} \pi(\gamma') \in \pi(\Gamma)''\cap \pi(\Gamma)'
\end{align*}
and that $T \notin \mathbb{C} I_{\Hpi}$. Therefore $\pi(\Gamma)''$ is not a factor. See \cite{kleppner1962structure,omland2014primeness}
for similar arguments.

For example, $\pi(\Gamma)''$ fails to be a factor for $\Gamma = \mathrm{SL}(2, \mathbb{Z})$ and $\pi$ being a holomorphic discrete series representation of $\mathrm{SL}(2, \mathbb{R})$, cf. Example \ref{sec:SL_ICC}.

\end{remark}

\section{Existence of cyclic or separating vectors} \label{sec:existence}
In this section we investigate how to produce large cyclic subspaces for $\pi(\Gamma)$. As a first step,
we investigate when the sum of two orthogonal cyclic subspaces, $[\pi(\Gamma)g_1]$
and $[\pi(\Gamma)g_2]$ is again cyclic. The following key lemma shows that this is the case, provided that the corresponding cyclic subspaces generated by the commutant algebra $\pi (\Gamma)'$, i.e.,
$[\pi(\Gamma)'g_1]$
and $[\pi(\Gamma)' g_2]$, are also orthogonal.

\begin{lemma}
\label{lemma_sum}
Let $(\pi, \Hpi)$ be a square-integrable $\sigma$-representation of a countable discrete group $\Gamma$.
Suppose $(g_k)_{k \in I}$ is a countable family of unit-norm vectors $g_k  \in \Hpi$  satisfying the following
\emph{simultaneous orthogonality conditions}
\begin{align}
\label{eq_d1}
\pi(\Gamma) g_k \perp \pi(\Gamma) g_j, \qquad k \not= j, \\
\label{eq_d2}
\pi(\Gamma)' g_k \perp \pi(\Gamma)' g_j, \qquad k \not= j.
\end{align}
Let $a \in \ell^1(I)$ with $a_k \not =0$ for all $k \in I$,
and set $g := \sum_{k \in I} a_k g_k$.
Then
\begin{align}
\label{eq_d1a}
[\pi(\Gamma) g] = \bigoplus_{k \in I} [\pi(\Gamma) g_k],
\\
\label{eq_d1b}
[\pi(\Gamma)' g] = \bigoplus_{k \in I} [\pi(\Gamma)' g_k].
\end{align}
\end{lemma}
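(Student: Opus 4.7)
Since $a \in \ell^1(I)$ and the vectors $g_k$ have unit norm (indeed, are pairwise orthogonal by \eqref{eq_d1}), the series $g = \sum_{k \in I} a_k g_k$ converges unconditionally in $\Hpi$. The easy inclusions in \eqref{eq_d1a} and \eqref{eq_d1b} are immediate: the right-hand sides are closed subspaces containing $g$, and they are invariant under $\pi(\Gamma)$ and $\pi(\Gamma)'$ respectively (each individual summand is), so they contain the corresponding closed cyclic subspaces generated by $g$.

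The substantive content is the opposite inclusion, namely $g_k \in [\pi(\Gamma) g]$ (and $g_k \in [\pi(\Gamma)' g]$) for each $k \in I$. The key idea is to use the complementary algebra to extract the $k$-th component of $g$. For \eqref{eq_d1a}, let $Q_k$ denote the orthogonal projection onto $[\pi(\Gamma)' g_k]$. This subspace is invariant under the von Neumann algebra $\pi(\Gamma)'$, so by the projection lemma $Q_k \in (\pi(\Gamma)')' = \pi(\Gamma)''$. By \eqref{eq_d2}, $Q_k g_j = 0$ for $j \neq k$, while $Q_k g_k = g_k$; hence
\begin{align*}
Q_k g = a_k g_k.
\end{align*}
On the other hand, the projection onto $[\pi(\Gamma) g]$ lies in $\pi(\Gamma)'$ (again by the projection lemma, since $[\pi(\Gamma) g]$ is $\pi(\Gamma)$-invariant), and therefore commutes with every element of $\pi(\Gamma)''$. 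Consequently $[\pi(\Gamma) g]$ is $\pi(\Gamma)''$-invariant, so $a_k g_k = Q_k g \in [\pi(\Gamma) g]$. Since $a_k \neq 0$, we deduce $g_k \in [\pi(\Gamma) g]$, and then $[\pi(\Gamma) g_k] \subseteq [\pi(\Gamma) g]$ by $\pi(\Gamma)$-invariance. Summing over $k$ and using that the subspaces $[\pi(\Gamma) g_k]$ are mutually orthogonal by \eqref{eq_d1}, we obtain the desired reverse inclusion in \eqref{eq_d1a}.

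For \eqref{eq_d1b} the argument is symmetric but slightly simpler, as it does not require passing to a bicommutant. Let $P_k$ be the orthogonal projection onto $[\pi(\Gamma) g_k]$; this lies in $\pi(\Gamma)'$ since $[\pi(\Gamma) g_k]$ is $\pi(\Gamma)$-invariant. By \eqref{eq_d1}, $P_k g = a_k g_k$, and since $P_k \in \pi(\Gamma)'$, we have $P_k g \in [\pi(\Gamma)' g]$, yielding $g_k \in [\pi(\Gamma)' g]$ and hence $[\pi(\Gamma)' g_k] \subseteq [\pi(\Gamma)' g]$.

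The main conceptual obstacle, which I expect to be the one worth highlighting, is recognizing that \eqref{eq_d2} is the condition needed to decouple the components of $g$ through projections in $\pi(\Gamma)''$ while retaining $\pi(\Gamma)$-invariance of the resulting cyclic subspace; the double commutant and the mutual commutation between $\pi(\Gamma)'$ and $\pi(\Gamma)''$ do all the work. Everything else is routine bookkeeping with closed invariant subspaces and the projection lemma from Section \ref{sec_vn}.
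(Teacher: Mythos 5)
Your proof is correct and follows essentially the same route as the paper: both arguments extract $a_k g_k = Q_k g$ using the orthogonal projection $Q_k$ onto $[\pi(\Gamma)' g_k]$, which lies in $\pi(\Gamma)''$ by the projection lemma together with \eqref{eq_d2}, and obtain \eqref{eq_d1b} by the symmetric argument with the roles of $\pi(\Gamma)'$ and $\pi(\Gamma)''$ interchanged. Your version merely makes explicit a step the paper leaves implicit, namely that $[\pi(\Gamma) g]$ is $\pi(\Gamma)''$-invariant (equivalently $[\pi(\Gamma) g]=[\pi(\Gamma)'' g]$ by von Neumann's density theorem), so that $Q_k g$ indeed lands in $[\pi(\Gamma) g]$.
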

\begin{proof}
Clearly,  $[\pi(\Gamma) g] \subseteq \bigoplus_{k \in I} [\pi(\Gamma) g_k]$. For the other
inclusion, let $k \in I$, and note that the projection $P_{\mathcal{K}_k}$ onto $\mathcal{K}_k = [\pi(\Gamma)' g_k]$ is in $\pi(\Gamma)''$ as $[\pi(\Gamma)' g_k]$ is $\pi(\Gamma)'$-invariant.
Therefore $g_k = {a_k}^{-1} P_{\mathcal{K}_k} g \in [\pi(\Gamma) g]$ for all $k \in I$.
This gives \eqref{eq_d1a}. The identity \eqref{eq_d1b} follows similarly, interchanging the roles of $\pi(\Gamma)'$ and
$\pi(\Gamma)''$. \
\end{proof}

\begin{proposition}
\label{prop_either}
Let $(\pi, \Hpi)$ be a square-integrable $\sigma$-representation of a countable discrete group $\Gamma$.
Suppose that $(\Gamma, \sigma)$ satisfies Kleppner's condition. Then
$\pi$ admits a cyclic vector or $\pi(\Gamma)''$ admits a separating vector (possibly both).
\end{proposition}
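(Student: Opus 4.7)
The plan is to produce a single vector $g \in \Hpi$ that is either cyclic for $\pi$ or separating for $\pi(\Gamma)''$, by a Zorn-type maximality argument combined with Lemma \ref{lemma_sum} and the absence of non-trivial doubly invariant subspaces (Proposition \ref{prop_no_double}). First I would apply Zorn's lemma to obtain a maximal family $(g_k)_{k \in I}$ of unit vectors satisfying the simultaneous orthogonality conditions \eqref{eq_d1} and \eqref{eq_d2}. Since the subspaces $\mathcal{K}_k := [\pi(\Gamma) g_k]$ are non-zero and pairwise orthogonal and $\Hpi$ is separable, $I$ is at most countable. Setting $g := \sum_{k \in I} a_k g_k$ for a sequence $(a_k) \in \ell^1(I)$ with $a_k \neq 0$, Lemma \ref{lemma_sum} gives $\mathcal{K} := [\pi(\Gamma) g] = \bigoplus_k \mathcal{K}_k$ and $\mathcal{L} := [\pi(\Gamma)' g] = \bigoplus_k [\pi(\Gamma)' g_k]$.

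Next I would exploit maximality. Any unit vector $h \in \mathcal{K}^\perp \cap \mathcal{L}^\perp$ would satisfy both orthogonality conditions relative to every $g_k$ and could be adjoined to the family, contradicting maximality; hence $\mathcal{K}^\perp \cap \mathcal{L}^\perp = \{0\}$. Moreover, $\mathcal{L}^\perp$ is $\pi(\Gamma)'$-invariant (as $\mathcal{L}$ is) and, by the projection lemma, $P_{\mathcal{K}^\perp} \in \pi(\Gamma)'$, so $P_{\mathcal{K}^\perp}(\mathcal{L}^\perp) \subseteq \mathcal{K}^\perp \cap \mathcal{L}^\perp = \{0\}$. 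This upgrades the intersection statement to the inclusion $\mathcal{L}^\perp \subseteq \mathcal{K}$.

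To conclude I would apply Proposition \ref{prop_no_double} to $\mathcal{N} := [\pi(\Gamma) \mathcal{L}^\perp]$. It is $\pi(\Gamma)$-invariant by construction, and it is also $\pi(\Gamma)'$-invariant, since any $S \in \pi(\Gamma)'$ commutes with $\pi(\Gamma)$ and preserves $\mathcal{L}^\perp$. If $\mathcal{L} \neq \Hpi$, then $\mathcal{N} \supseteq \mathcal{L}^\perp \neq \{0\}$, so Proposition \ref{prop_no_double} forces $\mathcal{N} = \Hpi$; but the inclusion $\mathcal{L}^\perp \subseteq \mathcal{K}$ and closedness and $\pi(\Gamma)$-invariance of $\mathcal{K}$ give $\mathcal{N} \subseteq \mathcal{K}$, so $\mathcal{K} = \Hpi$ and $g$ is cyclic. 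If instead $\mathcal{L} = \Hpi$, then $g$ is separating for $\pi(\Gamma)''$ by the characterization in Section \ref{sec_comp_sep}.

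I expect the main subtlety to be the asymmetric role of $\pi(\Gamma)$ and $\pi(\Gamma)'$: the projection $P_{\mathcal{K}}$ lies in $\pi(\Gamma)'$ while $P_{\mathcal{L}}$ lies in $\pi(\Gamma)''$, so the symmetric-looking hypotheses \eqref{eq_d1} and \eqref{eq_d2} play genuinely different roles in the argument. This asymmetry is what obliges us to pass from $\mathcal{L}^\perp$ to its $\pi(\Gamma)$-invariant hull before invoking Proposition \ref{prop_no_double}, rather than applying that proposition directly to $\mathcal{L}^\perp$ or $\mathcal{K}^\perp$.
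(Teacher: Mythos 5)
Your proof is correct and follows the paper's argument in all essentials: the same Zorn-type maximal family, the same use of Lemma \ref{lemma_sum}, the same maximality and commuting-projection steps (your inclusion $\mathcal{L}^\perp \subseteq \mathcal{K}$ is precisely the paper's identity $(I-P_1)(I-P_2)=0$), and the same appeal to Proposition \ref{prop_no_double}. The only difference is a minor streamlining of the endgame: the paper applies Proposition \ref{prop_no_double} to the two mutually orthogonal doubly invariant subspaces $[\pi(\Gamma)'([\pi(\Gamma)'' g]^\perp)]$ and $[\pi(\Gamma)''([\pi(\Gamma)' g]^\perp)]$ and rules out that both equal $\Hpi$, whereas you apply it once, to $[\pi(\Gamma)\mathcal{L}^\perp]$, with a case split on whether $\mathcal{L}=\Hpi$.
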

\begin{proof}
By Zorn's Lemma, we can select a family $(g_k)_{k \in I}$ of unit-norm vectors $g_k \in \Hpi$
satisfying the simultaneous orthogonality conditions \eqref{eq_d1} and \eqref{eq_d2},
and maximal with respect to that property.
The set $I$ is countable because $\Hpi$ is assumed to be separable.

Let $g := \sum_{k \in I} a_k g_k$ be as in Lemma \ref{lemma_sum},
so that \eqref{eq_d1a} and \eqref{eq_d1b} hold.

The maximality of $(g_k)_{ k \in I}$ implies that
\begin{align}
\label{eq_aaa}
[\pi(\Gamma) g]^\perp \cap [\pi(\Gamma)' g]^\perp = \{ 0 \};
\end{align}
otherwise,
we could choose a unit-norm vector $h \in [\pi(\Gamma) g]^\perp \cap [\pi(\Gamma)' g]^\perp$,
and extend the family $(g_k)_{k \in I}$. We claim that, in addition,
\begin{align}
\label{eq_bbb}
[\pi(\Gamma) g]^\perp \perp [\pi(\Gamma)' g]^\perp.
\end{align}
To see this, let $P_1$ and $P_2$ be the orthogonal projections onto $[\pi(\Gamma) g]$ and
$[\pi(\Gamma)' g]$ respectively. Then
$P_1 \in \pi(\Gamma)''$ and $P_2 \in \pi(\Gamma)'$, and therefore $P_1$ and $P_2$ commute. Hence,
by \eqref{eq_aaa},
\begin{align*}
\mathcal{R}( (I-P_1)(I-P_2) ) =
\mathcal{R}( (I-P_2) (I-P_1) ) \subseteq [\pi(\Gamma) g]^\perp \cap [\pi(\Gamma)' g]^\perp=\{0\}.
\end{align*}
Therefore $(I-P_1)(I-P_2)=0$, which implies \eqref{eq_bbb}.

Note that $[\pi(\Gamma) g]^\perp$ is $\pi(\Gamma)''$ invariant, while $[\pi(\Gamma)' g]^\perp$ is
$\pi(\Gamma)'$ invariant. As a consequence, the subspaces
\begin{align*}
\mathcal{K}_1 := [\pi(\Gamma)' \big([\pi(\Gamma)'' g]^\perp\big)], \qquad
\mathcal{K}_2 := [\pi(\Gamma)''\big([\pi(\Gamma)' g]^\perp\big)]
\end{align*}
are also orthogonal. Indeed, for $T' \in \pi(\Gamma)'$, $f_1 \in [\pi(\Gamma)'' g]^\perp$,
$T \in \pi(\Gamma)''$, $f_2 \in [\pi(\Gamma)' g]^\perp$,
the commutativity of $T$ and $T'$ implies that
$\ip{T' f_1}{T f_2} = \ip{T^* f_1}{(T')^* f_2} = 0$.
On the other hand, the subspaces $\mathcal{K}_1$ and $\mathcal{K}_2$ are doubly-invariant:
$\pi(\Gamma) \mathcal{K}_i = \pi(\Gamma)' \mathcal{K}_i=\mathcal{K}_i$ for $i=1,2$. Lemma \ref{prop_no_double} therefore implies that
$\mathcal{K}_i = \{0\}$ or $\mathcal{K}_i = \Hpi$ for $i=1,2$. The possibility $\mathcal{K}_1=\mathcal{K}_2=\Hpi$ is excluded (unless $\Hpi=\{0\}$)
because $\mathcal{K}_1 \perp \mathcal{K}_2$. Thus, either $\mathcal{K}_1 = \{0\}$,
or $\mathcal{K}_2 = \{0\}$.

If $\mathcal{K}_1=\{0\}$, then $[\pi(\Gamma)'' g]^\perp = \{0\}$, yielding
a cyclic vector: $\Hpi = [\pi(\Gamma)'' g]$. If $\mathcal{K}_2=\{0\}$, then
$[\pi(\Gamma)' g]=\Hpi$, which implies that $g$ is a separating vector for $\pi(\Gamma)''$ by the discussion in Section \ref{sec_comp_sep}.
\end{proof}
Lemma \ref{lemma_sum} and Proposition  \ref{prop_either} are simplifications and adaptions of standard results on central projections in reduced von Neumann algebras \cite[I.2, Proposition 3]{dixmier1981vonneumann}.

\section{Discrete series representations restricted to lattices}
\label{sec:discreteseries}
Let $G$ be a second countable unimodular group
and let $\Gamma \subset G$ be a lattice subgroup.
Let $(\pi, \Hpi)$ be a discrete series $\sigma$-representation of $G$,
i.e., irreducible and square-integrable.
This section is devoted to orbits of the restriction $\pi|_{\Gamma}$
of $(\pi, \Hpi)$ to $\Gamma$, i.e.,
\[
\pi(\Gamma)g = \big\{ \pi (\gamma) g \; : \; \gamma \in \Gamma \big\}
\]
for some $g \in \Hpi$.

In order to apply the results obtained in the previous sections,
it is essential that the restriction $\pi|_{\Gamma}$ be square-integrable
in the sense of \eqref{eq:squareintegrable_dense}. The following observation guarantees this.

\begin{lemma}
Let $\Gamma \subseteq G$ be a lattice and let
$(\pi, \Hpi)$ be a discrete series $\sigma$-representation of $G$.
 The Bessel vectors $\Bpi$
of the restriction $\pi|_{\Gamma}$ are norm dense in $\Hpi$.
\end{lemma}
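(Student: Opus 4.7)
The plan is to exhibit an explicit dense family of Bessel vectors for $\pi|_\Gamma$, produced by averaging an arbitrary vector over a shrinking basis of neighborhoods of the identity in $G$. The argument is in the spirit of Janssen's ``classroom'' computation for Gabor systems: it converts the $\ell^2$ sum over $\Gamma$ into an $L^2$ integral over $G$ by exploiting a small set on which the $\Gamma$-translates are disjoint, and then invokes the orthogonality relations \eqref{eq:ortho_rel}.

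First I would use the discreteness of $\Gamma$ in $G$ to select a symmetric open neighborhood $V_0$ of the identity with $V_0 V_0 \cap \Gamma = \{e\}$, so that the translates $\{ \gamma V_0 : \gamma \in \Gamma\}$ are pairwise disjoint. I then fix a decreasing symmetric neighborhood basis $V_n \subseteq V_0$ of the identity with $\bigcap_n V_n = \{e\}$. Given an arbitrary $f_0 \in \Hpi$, I define the smoothed vectors
\[
g_n := \mu_G(V_n)^{-1} \int_{V_n} \pi(x) f_0 \, d\mu_G(x) \in \Hpi,
\]
interpreted as Bochner integrals, which are well-defined and satisfy $g_n \to f_0$ in $\Hpi$ by strong continuity of $\pi$.

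The crux is to verify that each $g_n$ lies in $\Bpi$. For $f \in \Hpi$, the cocycle relation $\pi(\gamma) \pi(x) = \sigma(\gamma, x) \pi(\gamma x)$ combined with Cauchy--Schwarz yields
\[
|\langle f, \pi(\gamma) g_n \rangle|^2 \leq \mu_G(V_n)^{-1} \int_{V_n} |\langle f, \pi(\gamma x) f_0\rangle|^2 \, d\mu_G(x) = \mu_G(V_n)^{-1} \int_{\gamma V_n} |\langle f, \pi(y) f_0\rangle|^2 \, d\mu_G(y),
\]
using left-invariance of $\mu_G$ in the last equality. Summing over $\gamma \in \Gamma$ and using the disjointness of $\{ \gamma V_n \}$ together with the orthogonality relations \eqref{eq:ortho_rel}, I obtain
\[
\sum_{\gamma \in \Gamma} |\langle f, \pi(\gamma) g_n \rangle|^2 \leq \mu_G(V_n)^{-1} \int_G |\langle f, \pi(y) f_0\rangle|^2 \, d\mu_G(y) = \frac{d_\pi^{-1} \|f_0\|^2}{\mu_G(V_n)} \, \|f\|^2,
\]
which is finite. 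Hence $g_n \in \Bpi$, and since $g_n \to f_0$, the space $\Bpi$ is norm-dense in $\Hpi$.

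The main technical issue I anticipate is bookkeeping rather than conceptual: one should verify that $\pi$, although only assumed weakly Borel measurable, is strongly continuous on the second countable group $G$ (a standard modification argument), so that the Bochner integral defining $g_n$ makes sense and $g_n \to f_0$ holds. The Bessel constant deteriorates like $\mu_G(V_n)^{-1}$ as $V_n$ shrinks, but this is harmless: each individual $g_n$ already lies in $\Bpi$, and density is all that is required.
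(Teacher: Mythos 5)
Your averaging computation is correct and even quantitative: for open $V \subseteq V_0$ the translates $\gamma V$ are pairwise disjoint, and Cauchy--Schwarz, left invariance, and the orthogonality relations \eqref{eq:ortho_rel} give $g_V := \mu_G(V)^{-1}\int_V \pi(x) f_0 \, d\mu_G(x) \in \Bpi$ with Bessel constant $\mu_G(V)^{-1} d_\pi^{-1} \| f_0 \|_{\Hpi}^2$. The genuine gap is the final step, $g_n \to f_0$ ``by strong continuity of $\pi$''. A $\sigma$-representation in this paper is only assumed weakly Borel, and von Neumann's theorem that measurability implies continuity applies to genuine unitary representations, not projective ones: the discontinuity can sit entirely in a scalar phase. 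Concretely, if $\pi$ is any continuous discrete series $\sigma$-representation (say the Heisenberg projective representation of $\mathbb{R}^{2d}$) and $c : G \to \mathbb{T}$ is Borel with $c(e) = 1$, then $c\,\pi$ is again an irreducible, square-integrable projective representation (with cocycle $\sigma(x,y) c(x) c(y) \overline{c(xy)}$, and with the same absolute values of matrix coefficients), but its averages satisfy $g_n \approx \big( \mu_G(V_n)^{-1} \int_{V_n} c \, d\mu_G \big) f_0$, which for a suitably oscillating $c$ fails to converge to $f_0$ (it can tend to $0$). The ``standard modification argument'' you allude to does not exist at this level of generality: modifying $\pi$ changes the representation, and whether $\sigma$ is cohomologous to a cocycle continuous near $e$ is a nontrivial question the lemma cannot presuppose; indeed the paper's own $\pi_\alpha = \pi'_\alpha \circ \tau$ on $\mathrm{PSL}(2,\mathbb{R})$, built from a Borel cross-section $\tau$, is exactly the kind of discontinuous example the statement must cover.

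The gap is repairable within your scheme, and without any continuity, because density does not require $g_V \to f_0$: it suffices that the vectors $g_V$ span a dense subspace. Let $V$ range over open, relatively compact subsets of arbitrary right translates $V_0 y$, $y \in G$; the sets $\gamma V_0 y$, $\gamma \in \Gamma$, are still pairwise disjoint (right multiplication by $y$ preserves disjointness), so your estimate shows every such $g_V$ is Bessel, and finite linear combinations of Bessel vectors are Bessel. If $h \perp g_V$ for all such $V$ and all $f_0 \in \Hpi$, then the bounded Borel function $x \mapsto \langle \pi(x) f_0, h \rangle$ integrates to zero over every open subset of $V_0 y$, hence vanishes a.e.\ on $V_0 y$, hence (covering $G$ by countably many such translates, using second countability) a.e.\ on $G$; but \eqref{eq:ortho_rel} gives $\int_G |\langle \pi(x) f_0, h \rangle|^2 \, d\mu_G(x) = d_\pi^{-1} \| f_0 \|_{\Hpi}^2 \| h \|_{\Hpi}^2$, forcing $h = 0$. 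With this correction your argument becomes a genuinely different proof from the paper's, which instead embeds $\Hpi$ isometrically into $L^2(G)$ via a coefficient map $C_\eta$, uses a fundamental domain $\Omega$ so that functions supported in a single tile $\gamma \Omega$ have orthogonal (hence Bessel) $\Gamma$-translates under $\lambda_G^{\sigma}$, and projects back onto $C_\eta(\Hpi)$. Both arguments run on \eqref{eq:ortho_rel} alone, and the paper's never needs anything beyond measurability --- which is precisely where yours, as written, oversteps.
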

\begin{proof}
Using the orthogonality relations \eqref{eq:ortho_rel},
choose $\eta \in \Hpi$ such that the map $C_{\eta} : \Hpi \to L^2 (G)$
is an isometry. Let $P_{\mathcal{K}} : L^2 (G) \to L^2 (G)$ be the orthogonal
projection onto the closed subspace $\mathcal{K} := C_{\eta} (\Hpi)$, so that $P_{\mathcal{K}} \in \lambda_{G}^{\sigma} (G)'$. It suffices to show
that the space of Bessel vectors of $\lambda_G^{\sigma} |_{\Gamma}$ is norm dense in $\mathcal{K}$, since, if $\lambda_G^{\sigma} (\Gamma) F$ is Bessel in $\mathcal{K}$, then the unitary map $C^*_{\eta} : \mathcal{K} \to \Hpi$ produces a Bessel system in $\Hpi$, namely $\pi(\Gamma) C_{\eta}^* F = C_{\eta}^* \lambda_G^{\sigma} (\Gamma) F$.

To show that the space of Bessel vectors of $\lambda_G^{\sigma} |_{\Gamma}$ is norm dense in $\mathcal{K}$, let $\Omega \subseteq G$ be a left fundamental domain for $\Gamma \subseteq G$
and consider the collection
\[\mathcal{S}_{\Omega} := \Span \left\{ F \in L^2 (G) \,:\, \supp F \subseteq \gamma \Omega \; \text{for some} \; \gamma \in \Gamma \right\}.\]
(Here $\Span$ denotes the set of finite $\mathbb{C}$-linear combinations.)
Since the sets $\{\gamma \Omega:\gamma \in \Gamma\}$ have disjoint supports, any $F' \in L^2 (G)$ can be written as $F' = \sum_{\gamma \in \Gamma} F' \cdot \chi_{\gamma \Omega}$, where $\chi_{\gamma \Omega}$ is the indicator function of $\gamma \Omega$ and the series is norm convergent by orthogonality.
Hence, $\mathcal{S}_{\Omega}$ is norm dense in $L^2 (G)$.
Therefore, the image space $P_{\mathcal{K}} \mathcal{S}_{\Omega}$ is dense in $\mathcal{K}$, and it remains to show that $P_{\mathcal{K}} \mathcal{S}_{\Omega}$ consists of Bessel vectors for $\lambda_G^{\sigma} |_{\Gamma}$. For this, note that if $F \in \mathcal{S}_{\Omega}$ is such that $\supp F \subseteq \gamma \Omega$ for some $\gamma \in \Gamma$, then the family $\lambda_{G}^{\sigma} (\Gamma) F$ is orthogonal in $L^2 (G)$, and thus $\lambda_{G}^{\sigma} (\Gamma) P_{\mathcal{K}} F = P_{\mathcal{K}} \lambda_G^{\sigma} (\Gamma) F$ is a Bessel sequence in $\mathcal{K}$.
Taking finite linear combinations, it follows that any element of $P_{\mathcal{K}} \mathcal{S}_{\Omega}$ is a Bessel vector for $\lambda_G^{\sigma} |_{\Gamma}$, with a finite Bessel constant depending on the coefficients. This completes the proof.
\end{proof}

\subsection{Frame bounds and density}
\label{sec_density_1}

The following proposition relates frame bounds \eqref{eq_frame_bounds}, formal dimension and co-volume.

\begin{proposition} \label{prop:density_quasi}
Let $\Gamma \subseteq G$ be a lattice and let
$(\pi, \Hpi)$ be a discrete series $\sigma$-representation of $G$ of formal dimension $d_{\pi} > 0$.
If $\CS$ admits a Bessel bound $B > 0$,
then $ d_{\pi}^{-1} \| g \|_{\mathcal{H}_{\pi}}^2 \leq B \vol (G / \Gamma)$.
Moreover, if $\CS$ also admits a lower frame bound $A > 0$,
then
\begin{align} \label{eq:framebounds_formal}
A \vol(G /\Gamma) \leq  d_{\pi}^{-1} \| g \|_{\mathcal{H}_{\pi}}^2 \leq B \vol (G / \Gamma).
\end{align}
\end{proposition}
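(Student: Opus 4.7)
The plan is to combine the orthogonality relations \eqref{eq:ortho_rel} with a tiling of $G$ by translates of a fundamental domain, and then apply the (upper or lower) frame bound pointwise on the fundamental domain.

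First I would pick an arbitrary nonzero $f \in \mathcal{H}_\pi$ and compute, using \eqref{eq:ortho_rel}, that
\[
\int_G |\langle f, \pi(y) g\rangle|^2 \, d\mu_G(y) = d_\pi^{-1} \|f\|_{\Hpi}^2 \|g\|_{\Hpi}^2.
\]
Next, let $\Omega$ be a right fundamental domain for $\Gamma \subseteq G$, so that $G = \bigsqcup_{\gamma \in \Gamma} \Omega \gamma$ (disjointly, up to measure zero) and $\mu_G(\Omega) = \vol(G/\Gamma)$. Using unimodularity of $G$ (so that the measure is invariant under the substitution $y = x\gamma$) and Tonelli's theorem, I would rewrite
\[
\int_G |\langle f, \pi(y) g\rangle|^2 \, d\mu_G(y) = \int_\Omega \sum_{\gamma \in \Gamma} |\langle f, \pi(x\gamma) g\rangle|^2 \, d\mu_G(x).
\]
The cocycle identity gives $\pi(x\gamma) = \overline{\sigma(x,\gamma)} \pi(x)\pi(\gamma)$, so the inner summand equals $|\langle \pi(x)^* f, \pi(\gamma) g\rangle|^2$ after the phase is absorbed by the modulus.

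Now the Bessel hypothesis, applied to the vector $h := \pi(x)^* f$ (for each fixed $x \in \Omega$), yields
\[
\sum_{\gamma \in \Gamma} |\langle \pi(x)^* f, \pi(\gamma) g\rangle|^2 \leq B \|\pi(x)^* f\|_{\Hpi}^2 = B \|f\|_{\Hpi}^2,
\]
since $\pi(x)$ is unitary. Integrating over $\Omega$ and combining with the orthogonality relation gives $d_\pi^{-1} \|f\|_{\Hpi}^2 \|g\|_{\Hpi}^2 \leq B \vol(G/\Gamma) \|f\|_{\Hpi}^2$, whence the upper estimate follows after dividing by $\|f\|_{\Hpi}^2$. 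The lower estimate is obtained by the same argument, replacing $\leq B \|f\|_{\Hpi}^2$ by $\geq A \|f\|_{\Hpi}^2$ on the pointwise side.

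There is no real obstacle, only bookkeeping: one must be careful to use a \emph{right} (as opposed to left) fundamental domain so that the decomposition $y = x\gamma$ places $\gamma$ on the right, where the cocycle identity lets it act on $g$ through $\pi(\gamma)$; equivalently, one could use a left fundamental domain and dualize by considering the adjoint system. The unimodularity of $G$ is used implicitly to ensure the change of variable in each translate of $\Omega$ is measure-preserving, and the swap of sum and integral is justified by the positivity of the integrand via Tonelli's theorem.
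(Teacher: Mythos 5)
Your proof is correct and takes essentially the same route as the paper: both periodize the orthogonality relations over a right fundamental domain, use unimodularity and the cocycle identity to rewrite the summand as $|\langle \pi(x)^* f, \pi(\gamma) g\rangle|^2$, and then apply the Bessel (resp.\ lower frame) bound pointwise in $x$ before integrating over $\Omega$. Your added remarks on Tonelli and the right-versus-left fundamental domain are just the bookkeeping the paper leaves implicit.
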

\begin{proof}
Let $\Omega \subseteq G$ be a right fundamental domain of $\Gamma \subseteq G$.
Then
\begin{align*}
\int_G |\langle f, \pi (x) g \rangle |^2 \; d\mu_G (x)
= \sum_{\gamma \in \Gamma} \int_{\Omega} |\langle f, \pi(x \gamma ) g \rangle|^2 \; d\mu_{G} (x)
= \int_{\Omega} \sum_{\gamma \in \Gamma} |\langle \pi(x)^* f, \pi(\gamma) g \rangle|^2 \; d\mu_G (x)
\end{align*}
for $f \in \mathcal{H}_{\pi}$.
This, together with the orthogonality relations \eqref{eq:ortho_rel}, yields
\begin{align*}
d_{\pi}^{-1} \|f \|_{\mathcal{H}_{\pi}}^2 \|g\|_{\mathcal{H}_{\pi}}^2
=  \int_{\Omega} \sum_{\gamma \in \Gamma} |\langle \pi(x)^* f, \pi(\gamma) g \rangle|^2 \; d\mu_G (x).
\end{align*}
Thus, if $\CS$ is Bessel with bound $B$,
then
$ d_{\pi}^{-1} \|f \|_{\mathcal{H}_{\pi}}^2 \|g\|_{\mathcal{H}_{\pi}}^2
\leq B \int_{\Omega} \| f \|_{\mathcal{H}_{\pi}}^2 d\mu_G (x),
$
which shows the upper bound in \eqref{eq:framebounds_formal}. The desired lower bound is proven similarly.
\end{proof}

\begin{remark}
The proof of Proposition \ref{prop:density_quasi} also works for discrete subgroups
 $\Gamma \subset G$ having possibly infinite co-volume.
 However, the lower bound in \eqref{eq:framebounds_formal} shows that
 the restriction $\pi|_{\Gamma}$ admits a frame only if $\Gamma \subset G$ has finite co-volume.
 The lattice assumption is in fact even necessary for $\pi|_{\Gamma}$
 to admit a cyclic vector \cite[Corollary 2]{bekka2004square}.
\end{remark}

The idea of periodizing the orthogonality relations by means of Weil's integral formula can also be found in
\cite{cowling1991irreducibility, kuhn1992restrictions}.
Proposition \ref{prop:density_quasi} will be subsequently substantially sharpened by eliminating the frame bounds in the conclusion.

\subsection{Necessary density conditions}
\label{sec_density_2}

The following result provides necessary density conditions
for several spanning properties.
Note that Kleppner's condition is not assumed in parts (i) and (ii).

\begin{theorem} \label{thm:necessary_density}
Let $\Gamma \subseteq G$ be a lattice and let
$(\pi, \Hpi)$ be a discrete series $\sigma$-representation of $G$ of formal dimension $d_{\pi} > 0$.
\begin{enumerate}[(i)]
\item[(i)] If $\pi|_{\Gamma}$ admits a cyclic vector, then $\vol (G / \Gamma) d_{\pi} \leq 1$.

\noindent In particular, if $\pi|_{\Gamma}$ admits a frame vector, then  $\vol (G / \Gamma) d_{\pi} \leq 1$.

\item[(ii)] If $\pi|_{\Gamma}$ admits a Riesz vector, then  $\vol (G / \Gamma) d_{\pi} \geq 1$.

\item[(iii)] Suppose $(\Gamma, \sigma)$ satisfies Kleppner's condition.
If $\pi(\Gamma)''$ admits a separating vector, then $\vol (G / \Gamma)d_{\pi} \geq 1$.
\end{enumerate}
\end{theorem}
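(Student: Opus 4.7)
The plan is to derive each of the three density bounds by combining the ``improved spanning'' constructions of Section \ref{sec:improving} (which upgrade cyclicity/separateness to Parseval frames/orthonormal sequences) with the frame-bound estimate of Proposition \ref{prop:density_quasi}. The argument for (i) exploits the two-sided inequality for Parseval frames, while (ii) and (iii) only need the upper Bessel inequality.

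For part (i), suppose $\pi|_\Gamma$ admits a cyclic vector. Proposition \ref{prop:cyclic_tight-frame} produces a Parseval frame $\pi(\Gamma) g$ for $\mathcal{H}_\pi$, i.e., a frame with $A=B=1$. Applying the two-sided bound \eqref{eq:framebounds_formal} with $A=B=1$ yields
\[
\vol(G/\Gamma) \leq d_\pi^{-1} \|g\|_{\mathcal{H}_\pi}^2.
\]
Testing the Parseval identity on $f = g$ and isolating the $\gamma = e$ term gives $\|g\|^4 \leq \|g\|^2$, hence $\|g\|^2 \leq 1$. Combining these yields $\vol(G/\Gamma) d_\pi \leq 1$. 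The ``in particular'' clause is immediate since any frame vector is in particular cyclic.

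For part (ii), suppose $\pi(\Gamma) g$ is a Riesz sequence. By Remark \ref{rem_Riesz_on}, the vector $\tilde g := S_{g,\Gamma}^{-1/2} g$ generates an orthonormal sequence $\pi(\Gamma) \tilde g$ in $\mathcal{H}_\pi$, in particular of unit norm $\|\tilde g\|_{\mathcal{H}_\pi} = 1$ and Bessel with bound $B = 1$. The upper estimate in Proposition \ref{prop:density_quasi} applied to $\tilde g$ gives
\[
d_\pi^{-1} = d_\pi^{-1} \|\tilde g\|_{\mathcal{H}_\pi}^2 \leq \vol(G/\Gamma),
\]
which is the desired $\vol(G/\Gamma) d_\pi \geq 1$. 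Part (iii) follows in exactly the same way: under Kleppner's condition, Proposition \ref{prop:separating_riesz} converts a separating vector for $\pi(\Gamma)''$ into a vector $\tilde g \in \mathcal{H}_\pi$ with $\pi(\Gamma) \tilde g$ orthonormal, so the same Bessel-bound computation applies.

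There is no real obstacle to this plan; the content has been pre-packaged into the earlier sections. The only point worth flagging is the need to verify, in (i), that the Parseval normalization forces $\|g\|\leq 1$, which is essential to convert the equality $\vol(G/\Gamma) = d_\pi^{-1}\|g\|^2$ into the asymmetric bound $\vol(G/\Gamma) d_\pi \leq 1$. Also, it is worth noting that Kleppner's condition is used in (iii) only through the invocation of Proposition \ref{prop:separating_riesz}; parts (i) and (ii) need no such hypothesis, which matches the statement of the theorem.
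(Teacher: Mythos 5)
Your proposal is correct and follows essentially the same route as the paper: part (i) via Proposition \ref{prop:cyclic_tight-frame} plus the equality $\vol(G/\Gamma)d_\pi = \|g\|_{\Hpi}^2$ from Proposition \ref{prop:density_quasi} and the observation that a Parseval frame vector has $\|g\|_{\Hpi}^2 \leq 1$ (your $f=g$ test makes explicit what the paper asserts from the Bessel bound $1$), and parts (ii)--(iii) via Remark \ref{rem_Riesz_on} and Proposition \ref{prop:separating_riesz} reducing to a unit-norm orthonormal generator with Bessel bound $1$. Your closing remark that Kleppner's condition enters only through Proposition \ref{prop:separating_riesz} also matches the paper exactly.
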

\begin{proof}
(i) Suppose first that there exists a vector $g \in \mathcal{H}_{\pi}$
such that $\CS$ is a Parseval frame for $\mathcal{H}_{\pi}$.
Then, $\vol(G/\Gamma) d_{\pi} = \|g \|_{\mathcal{H}_{\pi}}^2$, by Proposition \ref{prop:density_quasi}. Since $\CS$ is a Bessel sequence with bound $1$, necessarily $\| g \|_{\mathcal{H}_{\pi}}^2 \leq 1$.
Hence $\vol(G/\Gamma) d_{\pi} \leq 1$, as claimed. Second, if $\pi|_{\Gamma}$ admits a cyclic vector, then it also admits a Parseval frame by Proposition \ref{prop:cyclic_tight-frame}.

\noindent (ii) Suppose that $\pi|_{\Gamma}$ admits a Riesz vector. Then, by Remark \ref{rem_Riesz_on}, there also exists $g \in \mathcal{H}_{\pi}$ such that
$\CS$ is orthonormal. Hence, $\CS$ has Bessel bound $1$, and, therefore, by
Proposition \ref{prop:density_quasi}, $d_{\pi}^{-1}  \leq \vol(G/ \Gamma)$.

\noindent (iii) Finally, under Kleppner's condition, if $\pi(\Gamma)''$ admits a separating vector, then it also admits an orthonormal sequence by Proposition \ref{prop:separating_riesz}, and we can apply part (ii).
\end{proof}

The idea of relating the orthogonality relations and
the frame inequalities for proving a density theorem as Theorem \ref{thm:necessary_density}
was used in Janssen's ``classroom proof'' of the density theorem for Gabor frames \cite{janssen_density}.
The use of an auxiliary tight frame to deduce
the density condition can be found in \cite[Theorem 11.3.1]{christensen2016introduction}.
A similar combination of these ideas have been used in \cite{jakobsen2016density}.
In this article, these ideas are further refined, implying necessary conditions for completeness.
The arguments for Riesz sequences seem to be new.

\subsection{Critical density}
This section is devoted to the spanning properties of $\pi|_{\Gamma}$
for lattices possessing the \emph{critical density} $\vol(G/\Gamma) d_{\pi} = 1$.

\begin{lemma} \label{lem:critical_complete}
Let $\Gamma \subseteq G$ be a lattice and let
$(\pi, \Hpi)$ be a discrete series $\sigma$-representation of $G$ of formal dimension $d_{\pi} > 0$.
Suppose $g \in \mathcal{H}_{\pi}$ is a unit vector such that $\CS$
is an orthonormal system in $\Hpi$. Then the following are equivalent:
\begin{enumerate}[(i)]
\item The system $\CS$ is complete in $\mathcal{H}_{\pi}$.
\item $\vol(G / \Gamma)  d_{\pi} = 1$.
\end{enumerate}
\end{lemma}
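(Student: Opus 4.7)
The implication (i) $\Rightarrow$ (ii) is immediate from earlier results: a complete orthonormal system $\pi(\Gamma)g$ is simultaneously a Parseval frame and a Riesz basis, so Theorem \ref{thm:necessary_density}(i)--(ii) gives both $\vol(G/\Gamma) d_\pi \leq 1$ and $\vol(G/\Gamma) d_\pi \geq 1$. Equivalently, one may apply Proposition \ref{prop:density_quasi} directly with $A = B = \norm{g}^2 = 1$.

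For the converse (ii) $\Rightarrow$ (i), my plan is to return to the periodization identity underlying the proof of Proposition \ref{prop:density_quasi} and exploit its equality case at the critical density. Let $\Omega$ be a right fundamental domain for $\Gamma$ and fix an arbitrary non-zero $f \in \Hpi$. Unfolding the integral via the tiling $G = \bigsqcup_{\gamma \in \Gamma} \Omega \gamma$ and invoking the orthogonality relations \eqref{eq:ortho_rel} yields
\[
d_\pi^{-1} \norm{f}^2 = \int_\Omega \sum_{\gamma \in \Gamma} |\ip{\pi(x)^* f}{\pi(\gamma) g}|^2 \, d\mu_G(x).
\]
Orthonormality of $\pi(\Gamma)g$ gives the pointwise Bessel estimate $\sum_\gamma |\ip{\pi(x)^* f}{\pi(\gamma) g}|^2 \leq \norm{\pi(x)^*f}^2 = \norm{f}^2$, so the right-hand side is at most $\vol(G/\Gamma)\norm{f}^2$. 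Under the critical density assumption $d_\pi^{-1} = \vol(G/\Gamma)$, equality is forced almost everywhere on $\Omega$, and the Parseval identity for an orthonormal system then places $\pi(x)^* f$ in $\mathcal{K} := [\pi(\Gamma)g]$ for a.e.\ $x \in \Omega$.

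To upgrade this to $\mathcal{K} = \Hpi$, let $h \in \mathcal{K}^\perp$. Then $\ip{f}{\pi(x) h} = 0$ for a.e.\ $x \in \Omega$. Since $\mathcal{K}^\perp$ is $\pi(\Gamma)$-invariant, the same reasoning applied with $\pi(\gamma) h$ in place of $h$, combined with the cocycle identity $\pi(x)\pi(\gamma) = \sigma(x,\gamma) \pi(x\gamma)$, extends the vanishing to a.e.\ $x \in \Omega\gamma$ for each $\gamma \in \Gamma$. Taking a countable union produces $\ip{f}{\pi(x) h} = 0$ for a.e.\ $x \in G$, and the orthogonality relations then yield $d_\pi^{-1} \norm{f}^2 \norm{h}^2 = 0$. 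Since $f \neq 0$ and $d_\pi > 0$, this forces $h = 0$, so $\mathcal{K}^\perp = \{0\}$.

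I expect the delicate point to be precisely this last ``a.e.\ on $\Omega$ to a.e.\ on $G$'' upgrade: a direct pointwise argument would require strong continuity of the projective representation, but the route above avoids that by leveraging the $\pi(\Gamma)$-invariance of $\mathcal{K}^\perp$ together with the global orthogonality relations. Notably, Kleppner's condition is not needed anywhere; the result applies at the level of individual orthonormal generators.
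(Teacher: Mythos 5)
Your proposal is correct, and while it opens the same way as the paper's proof, its second half takes a genuinely different route. Both arguments handle (i) $\Rightarrow$ (ii) via Proposition \ref{prop:density_quasi}, and both begin (ii) $\Rightarrow$ (i) with the identical periodization of the orthogonality relations \eqref{eq:ortho_rel} over a right fundamental domain $\Omega$, using Bessel's inequality and the critical-density hypothesis to force the Parseval equality a.e.\ on $\Omega$. From there the paper runs this for a countable dense family $\{f_n\}$, removes the countable union of exceptional null sets to find a single point $x_0 \in \Omega$ where the Parseval identity holds for every $f_n$, extends by density to all $f \in \mathcal{H}_{\pi}$, and replaces $f$ by $\pi(x_0)f$ to conclude that $\pi(\Gamma)g$ is itself a Parseval frame, hence complete. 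You instead fix one nonzero $f$, read the a.e.\ equality case of Bessel as the membership $\pi(x)^* f \in \mathcal{K} = [\pi(\Gamma)g]$, and annihilate any $h \in \mathcal{K}^{\perp}$ by propagating $\langle f, \pi(\cdot)h\rangle = 0$ from $\Omega$ to all of $G$ using the $\pi(\Gamma)$-invariance of $\mathcal{K}^{\perp}$ (valid in the projective setting since $\pi(\gamma)^*$ is a unimodular multiple of $\pi(\gamma^{-1})$), the cocycle identity $\pi(x)\pi(\gamma) = \sigma(x,\gamma)\pi(x\gamma)$, and the tiling $G = \bigcup_{\gamma} \Omega\gamma$; a second application of \eqref{eq:ortho_rel} then gives $d_{\pi}^{-1}\|f\|^2\|h\|^2 = 0$, so $h = 0$. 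The measure-theoretic step is sound: the exceptional null set depends only on $f$, right translates of null sets are null since $G$ is unimodular, and the union over $\Gamma$ is countable. What each approach buys: yours needs only a single test vector $f$, avoids the countable dense family (and hence any appeal to separability of $\mathcal{H}_{\pi}$) and the selection of a common point $x_0$; the paper's yields the formally stronger conclusion that $\pi(\Gamma)g$ is a Parseval frame (though for an orthonormal system this is equivalent to completeness). Both correctly avoid any continuity assumption on $\pi$, and you are right that Kleppner's condition plays no role in this lemma.
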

\begin{proof}
That (i) implies (ii) follows from Proposition \ref{prop:density_quasi}.

Conversely, suppose that $\vol(G/ \Gamma) d_{\pi} = 1$.
Let $\Omega \subseteq G$ be a right fundamental domain of $\Gamma \subseteq G$,
and $\{f_n: n \in \mathbb{N}\}$ a norm dense subset of $\Hpi$.

Fix $n \in \mathbb{N}$.
By the orthogonality relations \eqref{eq:ortho_rel}
and the assumption $\| g \|_{\mathcal{H}_{\pi}} = 1$,
\begin{align*}
 d_{\pi}^{-1} \| f_n \|_{\mathcal{H}_{\pi}}^2 = \int_G | \langle f_n, \pi (x) g \rangle |^2 \; d\mu_G (x)
 = \sum_{\gamma \in \Gamma} \int_{\Omega} | \langle f_n, \pi (x  \gamma) g \rangle |^2 \; d\mu_G (x) .
 \end{align*}
Since $\vol(G/ \Gamma)d_{\pi} = 1$,
 \begin{align*}
0 &= d_{\pi} \int_{\Omega} \| f_n \|_{\mathcal{H}_{\pi}}^2 \; d\mu_G(x) - d_{\pi} \sum_{\gamma \in \Gamma} \int_{\Omega} | \langle f_n, \pi (x \gamma) g \rangle |^2 \; d\mu_G (x)  \\
&= d_{\pi} \int_{\Omega} \bigg( \|f_n\|_{\mathcal{H}_{\pi}}^2 - \sum_{\gamma \in \Gamma} | \langle f_n, \pi (x \gamma) g \rangle |^2 \bigg)\; d\mu_G (x). \numberthis \label{eq:nonzero_integrand}
\end{align*}
But $\sum_{\gamma \in \Gamma} | \langle f_n, \pi (x \gamma) g \rangle |^2 \leq \| f \|^2_{\mathcal{H}_{\pi}}$
for any $x \in \Omega$ by Bessel's inequality.
Thus the integrand in \eqref{eq:nonzero_integrand} is $0$ for $x \in \Omega \setminus E_n$, where $E_n \subseteq \Omega$ is a null measure set.

Since $\bigcup_{n \in \mathbb{N}} E_n$ has null measure, we can choose $x_0 \in \Omega \setminus \bigcup_{n \in \mathbb{N}} E_n$.
Therefore,
\begin{align*}
\sum_{\gamma \in \Gamma} | \langle f, \pi (x_0 \gamma) g \rangle |^2 = \| f \|^2_{\mathcal{H}_{\pi}},
\end{align*}
holds for all $f \in \{f_n : n \in \mathbb{N}\}$, and extends by density to all $f \in \Hpi$.
Replacing $f$ by $\pi(x_0)f$ gives
$
\sum_{\gamma \in \Gamma} | \langle f, \pi (\gamma) g \rangle |^2 = \| f \|^2_{\mathcal{H}_{\pi}}$,
for all $f \in \Hpi$. This shows that $\CS$ is complete.
\end{proof}

\begin{proposition} \label{prop:critical_density}
Let $\Gamma \subseteq G$ be a lattice and let
$(\pi, \Hpi)$ be a discrete series $\sigma$-representation of $G$ of formal dimension $d_{\pi} > 0$.
The following assertions are equivalent:
\begin{enumerate}[(i)]
\item The system $\CS$ is a Riesz (resp. orthonormal) basis for $\mathcal{H}_{\pi}$.
\item The system $\CS$ is a frame (resp. Parseval frame) for $\mathcal{H}_{\pi}$ with
$\vol (G / \Gamma)  d_{\pi} = 1$.
\item The system $\CS$ is a Riesz (resp. orthonormal) sequence  in $\mathcal{H}_{\pi}$ with
$\vol (G / \Gamma)  d_{\pi} = 1$.
\end{enumerate}
\end{proposition}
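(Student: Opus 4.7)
The plan is to prove the chain of implications $(i) \Rightarrow (ii) \Rightarrow (iii) \Rightarrow (i)$, handling the Riesz and orthonormal parenthetical cases in parallel. The key tools are the density bound Proposition \ref{prop:density_quasi}, the necessity results Theorem \ref{thm:necessary_density} (parts (i) and (ii) only, so Kleppner's condition is not needed), and the ``parsevalization/orthonormalization'' procedures of Remarks \ref{rem_parsevalization} and \ref{rem_Riesz_on}, together with the critical density Lemma \ref{lem:critical_complete}.

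For $(i) \Rightarrow (ii)$, suppose $\pi(\Gamma) g$ is a Riesz basis. Then it is both a frame and a Riesz sequence. Since a frame is complete (hence $g$ is cyclic), Theorem \ref{thm:necessary_density}(i) gives $\vol(G/\Gamma)\, d_\pi \le 1$, while Theorem \ref{thm:necessary_density}(ii) applied to the Riesz vector $g$ gives $\vol(G/\Gamma)\, d_\pi \ge 1$; equality follows. In the orthonormal basis case one can alternatively invoke Proposition \ref{prop:density_quasi} with $A = B = 1$ and $\|g\| = 1$.

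For $(ii) \Rightarrow (iii)$, set $\tilde g := S_{g,\Gamma}^{-1/2} g$, so that $\pi(\Gamma) \tilde g$ is a Parseval frame by Remark \ref{rem_parsevalization}. Applying Proposition \ref{prop:density_quasi} to this Parseval frame with $A = B = 1$ and using $\vol(G/\Gamma)\, d_\pi = 1$, I obtain $\|\tilde g\|^2 = 1$. The Parseval identity applied to $f = \tilde g$ then yields
\[
1 = \|\tilde g\|^4 = |\langle \tilde g, \pi(e) \tilde g\rangle|^2 \le \sum_{\gamma \in \Gamma} |\langle \tilde g, \pi(\gamma) \tilde g\rangle|^2 = \|\tilde g\|^2 = 1,
\]
forcing $\langle \tilde g, \pi(\gamma) \tilde g\rangle = 0$ for $\gamma \neq e$. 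Hence $\pi(\Gamma)\tilde g$ is an orthonormal system, and $\pi(\Gamma) g = S_{g,\Gamma}^{1/2} \pi(\Gamma) \tilde g$ is the image of an orthonormal system under a bounded invertible operator on $\Hpi$, hence a Riesz sequence (in fact a Riesz basis). In the Parseval case, $S_{g,\Gamma} = I$ so $\tilde g = g$ and $\pi(\Gamma) g$ is already orthonormal.

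For $(iii) \Rightarrow (i)$, use Remark \ref{rem_Riesz_on}: $\pi(\Gamma) g$ is a Riesz basis for $[\pi(\Gamma) g]$, and, setting $\tilde g := S_{g,\Gamma}^{-1/2} g \in [\pi(\Gamma) g]$, the system $\pi(\Gamma)\tilde g$ is orthonormal in $\Hpi$. In particular, $\|\tilde g\| = 1$. Lemma \ref{lem:critical_complete} applied to $\tilde g$ under the hypothesis $\vol(G/\Gamma)\, d_\pi = 1$ gives $[\pi(\Gamma)\tilde g] = \Hpi$, and since $\tilde g \in [\pi(\Gamma) g]$ this forces $[\pi(\Gamma) g] = \Hpi$. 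Therefore $\pi(\Gamma) g$ is a complete Riesz sequence, i.e.\ a Riesz basis. In the orthonormal case $g = \tilde g$ is already a unit vector generating an orthonormal system, and Lemma \ref{lem:critical_complete} yields completeness directly, giving an orthonormal basis. The main subtlety lies in Step $(ii) \Rightarrow (iii)$, where one must recognize that the critical density forces the Parseval normalization $\tilde g$ to have unit norm, which in turn upgrades Parseval tightness to orthonormality; everything else is assembling the preparatory lemmas.
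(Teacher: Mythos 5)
Your proof is correct and takes essentially the same route as the paper: the same ingredients (Proposition \ref{prop:density_quasi}, Theorem \ref{thm:necessary_density}, Remarks \ref{rem_parsevalization} and \ref{rem_Riesz_on}, and Lemma \ref{lem:critical_complete}) with the same normalization trick $\tilde g = S_{g,\Gamma}^{-1/2} g$, only arranged as a cycle $(i)\Rightarrow(ii)\Rightarrow(iii)\Rightarrow(i)$ instead of the paper's pairwise implications. The one step you leave implicit in $(ii)\Rightarrow(iii)$ --- passing from $\langle \tilde g, \pi(\gamma)\tilde g\rangle = 0$ for $\gamma \neq e$ to orthonormality of the \emph{whole} system --- is a one-line covariance observation (namely $\pi(\gamma')^{*}\pi(\gamma)$ is a unimodular multiple of $\pi(\gamma'^{-1}\gamma)$, so $|\langle \pi(\gamma')\tilde g, \pi(\gamma)\tilde g\rangle| = |\langle \tilde g, \pi(\gamma'^{-1}\gamma)\tilde g\rangle|$), which the paper sidesteps by applying the Parseval identity to each $f = \pi(\gamma') g$ directly in \eqref{eq:parseval_unit}.
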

\begin{proof}
The implications (i) $\Rightarrow$ (ii) and (i) $\Rightarrow$ (iii) follow directly from Theorem \ref{thm:necessary_density}.

(ii) $\Rightarrow$ (i) First, we show that a Parseval
frame $\pi(\Gamma) g$ with $\vol(G/\Gamma) d_{\pi} = 1$
is an orthonormal basis for $\Hpi$. Indeed, we have $\| g \|_{\Hpi}^2 = 1$ by Proposition \ref{prop:density_quasi}, and hence
\begin{align} \label{eq:parseval_unit}
1 = \| \pi(\gamma') g \|_{\Hpi}^2 = \sum_{\gamma \in \Gamma} | \langle \pi(\gamma') g, \pi(\gamma) g \rangle|^2 = 1 + \sum_{\gamma \in \Gamma \setminus \{\gamma'\}} |\langle \pi(\gamma') g, \pi(\gamma) g \rangle|^2,
\end{align}
which shows that $\langle \pi(\gamma') g, \pi(\gamma) g \rangle = \delta_{\gamma, \gamma'}$ for all $\gamma, \gamma' \in \Gamma$.
Thus $\pi(\Gamma) g $ is an orthonormal basis for $\mathcal{H}_{\pi}$.

Second, if $\pi(\Gamma) g$ is an arbitrary frame with $\vol(G/\Gamma) d_{\pi} = 1$, then
$\pi (\Gamma) S_{g, \Gamma}^{-1/2} g$ is a Parseval frame for $\Hpi$,
and hence an orthonormal basis for $\Hpi$ by the above.
But $\CS  = S_{g,\Gamma}^{1/2} \pi(\Gamma) S_{g,\Gamma}^{-1/2} g$,
and thus $\CS$ is a Riesz basis for $\Hpi$.

(iii) $\Rightarrow$ (i) Suppose $\CS$ is a Riesz sequence in $\Hpi$. Then $S^{-1/2}_{g,\Gamma} : [\pi(\Gamma) g] \to [\pi(\Gamma)g]$ is well-defined and bounded. Hence, the system $\pi(\Gamma) S_{g,\Gamma}^{-1/2} g $ is orthonormal in $\mathcal{H}_{\pi}$
by Remark \ref{rem_Riesz_on}, thus complete by Lemma \ref{lem:critical_complete}.
As above, $\pi (\Gamma) g = S_{g,\Gamma}^{1/2} \pi(\Gamma) S_{g,\Gamma}^{-1/2} g$,
showing that $\CS$ is a Riesz basis.
Moreover, if $\CS$ itself is orthonormal, then its completeness follows directly by Lemma \ref{lem:critical_complete}.
\end{proof}

\section{Proof of the density theorem} \label{sec:density_thm}
We finally can prove the main result of the article.
\begin{theorem}
\label{th_main}
Let $\Gamma \subseteq G$ be a lattice in a second countable unimodular group $G$. Let
$(\pi, \Hpi)$ be a discrete series $\sigma$-representation of $G$
of formal dimension $d_{\pi} > 0$.
Suppose that $(\Gamma, \sigma)$ satisfies Kleppner's condition.
Then the following assertions hold:
\begin{itemize}
\item[(i)] If $\vol (G / \Gamma) d_{\pi} < 1$, then $\pi|_{\Gamma}$ admits a Parseval frame,
but $\pi(\Gamma)''$ does not admit a separating vector. (In particular, $\pi|_{\Gamma}$ does not admit a Riesz vector.)

\item[(ii)] If $\vol (G / \Gamma) d_{\pi} = 1$, then $\pi|_{\Gamma}$ admits an orthonormal basis.

\item[(iii)] If $\vol (G / \Gamma) d_{\pi} > 1$, then $\pi|_{\Gamma}$ admits an orthonormal sequence, but not a cyclic vector. (In particular, $\pi|_{\Gamma}$ does not admit a frame vector.)
\end{itemize}
\end{theorem}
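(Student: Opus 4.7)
The plan is to assemble the theorem from the dichotomy in Proposition~\ref{prop_either}, the necessary density bounds in Theorem~\ref{thm:necessary_density}, the frame-upgrade results (Propositions~\ref{prop:cyclic_tight-frame} and \ref{prop:separating_riesz}), and the critical-density statements (Lemma~\ref{lem:critical_complete} and Proposition~\ref{prop:critical_density}). Essentially all the hard work has already been done; the theorem is a matter of routing the three regimes through the correct combination of previously established results.

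For part (i), with $\vol(G/\Gamma) d_\pi < 1$, I would first rule out that $\pi(\Gamma)''$ admits a separating vector: Theorem~\ref{thm:necessary_density}(iii) forces $\vol(G/\Gamma) d_\pi \geq 1$ in that case, a contradiction. This is already the second claim of (i), and also excludes the existence of a Riesz vector (by Proposition~\ref{prop_riesz_sep}, a Riesz vector would be separating). Proposition~\ref{prop_either} then guarantees that $\pi|_\Gamma$ has a cyclic vector, and Proposition~\ref{prop:cyclic_tight-frame} promotes it to a Parseval frame.

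For part (iii), with $\vol(G/\Gamma) d_\pi > 1$, the situation is symmetric. Theorem~\ref{thm:necessary_density}(i) rules out a cyclic vector for $\pi|_\Gamma$ (and in particular a frame vector), so by Proposition~\ref{prop_either} the von Neumann algebra $\pi(\Gamma)''$ admits a separating vector. Proposition~\ref{prop:separating_riesz} then produces an orthonormal sequence, as required.

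For part (ii), the critical case $\vol(G/\Gamma) d_\pi = 1$, I would again split according to Proposition~\ref{prop_either}. If $\pi|_\Gamma$ has a cyclic vector, Proposition~\ref{prop:cyclic_tight-frame} yields a Parseval frame $\pi(\Gamma)g$, and the implication (ii)$\Rightarrow$(i) of Proposition~\ref{prop:critical_density} at the critical density upgrades it to an orthonormal basis. If instead $\pi(\Gamma)''$ admits a separating vector, Proposition~\ref{prop:separating_riesz} supplies a $g$ with $\pi(\Gamma)g$ orthonormal, and Lemma~\ref{lem:critical_complete} shows that under the critical density equality this system is automatically complete, hence an orthonormal basis. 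In either case the conclusion of (ii) holds, and the proof is complete. There is no real obstacle here: the only point that requires a little care is remembering that a Riesz vector produces a separating vector (via Proposition~\ref{prop_riesz_sep}) so that the ``in particular'' clauses in (i) and (iii) follow automatically.
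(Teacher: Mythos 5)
Your proposal is correct and follows essentially the same route as the paper's own proof: necessary density conditions (Theorem~\ref{thm:necessary_density}) rule out one alternative of the dichotomy in Proposition~\ref{prop_either}, and Propositions~\ref{prop:cyclic_tight-frame} and \ref{prop:separating_riesz} upgrade the surviving vector, with Proposition~\ref{prop:critical_density} (equivalently, Lemma~\ref{lem:critical_complete}) handling the critical case. Your only deviations are cosmetic: the paper deduces the Riesz-vector exclusion in (i) directly from Theorem~\ref{thm:necessary_density}(ii) rather than via Proposition~\ref{prop_riesz_sep}, and in (ii) it cites Proposition~\ref{prop:critical_density} for the separating-vector branch where you invoke Lemma~\ref{lem:critical_complete} directly.
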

\begin{proof}
\noindent (i) Assume that $\vol (G / \Gamma) d_{\pi} < 1$. Then, by Theorem \ref{thm:necessary_density},
$\pi(\Gamma)''$ does not admit a separating vector. Combining this information with Proposition \ref{prop_either}, it follows that $\pi|_{\Gamma}$ admits a cyclic vector, and by Proposition \ref{prop:cyclic_tight-frame} also a Parseval frame. The ``in particular" part also follows from Theorem \ref{thm:necessary_density}.

\noindent (ii) Assume that $\vol (G / \Gamma) d_{\pi} = 1$. By Proposition \ref{prop_either}, $\pi|_{\Gamma}$ admits either a cyclic  or separating vector. In the first case, by Proposition \ref{prop:cyclic_tight-frame}, $\pi|_{\Gamma}$ also admits
a Parseval frame $\CS$, and hence an orthonormal basis by Proposition \ref{prop:critical_density}.
In the second case, by Proposition \ref{prop:separating_riesz}, $\pi|_{\Gamma}$  admits an orthonormal sequence $\CS$, which forms an orthonormal basis by
Proposition \ref{prop:critical_density}.

\noindent (iii) Assume that $\vol (G / \Gamma) d_{\pi} > 1$. Then, by Theorem \ref{thm:necessary_density},
$\pi|_{\Gamma}$ does not admit a cyclic vector. Combining this information with Proposition \ref{prop_either}, it follows that $\pi(\Gamma)''$ admits a separating vector, and by Proposition \ref{prop:separating_riesz}, also an orthonormal sequence.
\end{proof}

\subsection{Proof of Theorem \ref{thm:intro_ICC}}
The hypotheses of Theorem \ref{thm:intro_ICC} are a particular case of Theorem \ref{th_main}. Indeed, an ICC lattice $\Gamma$ satisfies Kleppner's condition
for any cocycle $\sigma$, in particular, for $\sigma \equiv 1$.  \footnote{In part (i) of Theorem \ref{thm:intro_ICC}, the assertion that $\pi|_{\Gamma}$ does not admit a separating vector means that $\pi(\Gamma)''$ does not admit such a vector. } \QED

A far reaching generalization of Theorem \ref{thm:intro_ICC} without the ICC condition is due to Bekka
\cite{bekka2004square}; see Section \ref{sec:examples}.

\section{Examples and applications} \label{sec:examples}

\subsection{The density theorem for semisimple Lie groups}
For certain center-free semisimple Lie groups,
a lattice is automatically ICC,
and hence Kleppner's condition is satisfied.
For reference purposes, we state Theorem \ref{th_main} in this setting.

\begin{theorem}
\label{th_semi}
Let $G$ be a center-free connected semisimple real Lie group all of whose connected, normal, compact subgroups are trivial.\footnote{In the jargon of semisimple Lie groups, a group all of whose connected, normal, compact subgroups are trivial is sometimes referred to as \emph{a group without compact factors}.}
Let $(\pi, \Hpi)$ be a discrete series $\sigma$-representation of $G$ of formal dimension $d_{\pi} > 0$.
Let $\Gamma \subseteq G$ be a lattice. Then
\begin{itemize}
\item[(i)] If $\vol (G / \Gamma) d_{\pi} < 1$, then $\pi|_{\Gamma}$ admits a Parseval frame,
but $\pi(\Gamma)''$ does not admit a separating vector. (In particular, $\pi|_{\Gamma}$ does not admit a Riesz vector.)

\item[(ii)] If $\vol (G / \Gamma) d_{\pi} = 1$, then $\pi|_{\Gamma}$ admits an orthonormal basis.

\item[(iii)] If $\vol (G / \Gamma) d_{\pi} > 1$, then $\pi|_{\Gamma}$ admits an orthonormal sequence, but not a cyclic vector. (In particular, $\pi|_{\Gamma}$ does not admit a frame vector.)
\end{itemize}

\end{theorem}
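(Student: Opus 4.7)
The plan is to reduce Theorem \ref{th_semi} to Theorem \ref{th_main} by verifying that every lattice $\Gamma$ in such a group $G$ is an ICC group, so that Kleppner's condition is automatically satisfied for the cocycle $\sigma$ of the given discrete series representation (indeed, for any cocycle). Once the ICC property is established, the trichotomy in (i)--(iii) follows by invoking Theorem \ref{th_main} verbatim. Second countability and unimodularity of $G$, which are hypotheses of Theorem \ref{th_main}, are automatic: connected real Lie groups are second countable, and connected semisimple Lie groups are unimodular.

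To prove that $\Gamma$ is ICC, I would argue as follows. Fix $\gamma_0 \in \Gamma \setminus \{e\}$. Its conjugacy class in $\Gamma$ has cardinality equal to the index $[\Gamma : Z_\Gamma(\gamma_0)]$, where $Z_\Gamma(\gamma_0) = \Gamma \cap Z_G(\gamma_0)$ and $Z_G(\gamma_0)$ is the centralizer of $\gamma_0$ in $G$. The goal is therefore to show that $[\Gamma : \Gamma \cap Z_G(\gamma_0)]$ is infinite. Since $G$ is center-free and $\gamma_0 \neq e$, the subgroup $Z_G(\gamma_0)$ is a proper real algebraic subgroup of $G$ (it is cut out by the polynomial conditions $\mathrm{Ad}(g)v = v$ for $v$ in a basis of the Lie algebra $\mathfrak{z}(\gamma_0)$ viewed inside a faithful linear representation, and equality would force $\gamma_0$ to be central).

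The key external ingredient is the Borel density theorem: under the standing hypotheses on $G$, any lattice $\Gamma \subseteq G$ is Zariski dense. I would then use the general observation that if $\Gamma$ is Zariski dense in $G$ and $H \subsetneq G$ is a proper Zariski-closed subgroup, then $[\Gamma : \Gamma \cap H] = \infty$. Indeed, if this index were finite, $\Gamma$ would be contained in a finite union of cosets of $H$, which is itself Zariski closed; the Zariski closure of $\Gamma$ would then be contained in this proper closed set, contradicting Zariski density. Applying this with $H = Z_G(\gamma_0)$ yields the desired conclusion that $C_{\gamma_0}$ is infinite.

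The main obstacle, or rather the only non-elementary input, is the Borel density theorem; once that is quoted, the algebraic-geometric observation about indices is routine, and the rest of the argument is just the reduction to Theorem \ref{th_main}. I would finish by noting that the center-free hypothesis cannot be weakened to merely ``no connected normal compact subgroups'' without further restrictions on $\sigma$, since a non-trivial finite center would produce a finite central conjugacy class and potentially destroy Kleppner's condition; the example of $\mathrm{SL}(2,\mathbb{Z}) \subset \mathrm{SL}(2,\mathbb{R})$ with a holomorphic discrete series, discussed in Example \ref{sec:SL_ICC}, illustrates how the failure of ICC manifests itself.
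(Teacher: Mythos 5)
Your proposal is correct and follows essentially the same route as the paper: the paper's proof likewise reduces Theorem \ref{th_semi} to Theorem \ref{th_main} by noting that connected semisimple Lie groups are unimodular and quoting the ICC property of lattices from \cite[Lemma 3.3.1]{goodman1989coxeter} or \cite[Theorem 2]{bekka2004square}, and your Borel-density argument is precisely the standard proof of that quoted lemma, so you have simply made the citation self-contained. One cosmetic correction: $Z_G(\gamma_0)$ is Zariski closed because, via the faithful adjoint representation, it is cut out by the polynomial equations $\mathrm{Ad}(g)\,\mathrm{Ad}(\gamma_0) = \mathrm{Ad}(\gamma_0)\,\mathrm{Ad}(g)$, not by requiring $\mathrm{Ad}(g)$ to fix $\mathfrak{z}(\gamma_0)$ pointwise --- the latter condition defines the pointwise centralizer of the subalgebra $\mathfrak{z}(\gamma_0)$, which need not coincide with $Z_G(\gamma_0)$.
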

\begin{proof}
A connected semisimple Lie group is unimodular, see, e.g., \cite[Corollary 8.31]{knapp2002lie}.
Under the additional hypotheses, a lattice $\Gamma$ is an ICC group by
\cite[Lemma 3.3.1]{goodman1989coxeter} or \cite[Theorem 2]{bekka2004square}. Therefore,
$(\Gamma, \sigma)$ satisfies Kleppner's condition, and the conclusion follows from
Theorem \ref{th_main}.
\end{proof}

As we show in Example \ref{sec:SL_ICC}, the conclusion of Theorem \ref{th_semi} may fail when the center of the group is non-trivial.
A more general version of Theorem \ref{th_semi}, that does not require the ICC condition, was derived by Bekka \cite{bekka2004square},
and applies to semisimple Lie groups with a possibly non-trivial center \cite[Theorem 2]{bekka2004square},
and to a class of algebraic groups over more general fields.
Theorem \ref{th_semi} can also be phrased more generally for such algebraic groups, provided they have a trivial center.

We now illustrate an important instance of Theorem \ref{th_semi}.

\begin{example}
\label{sec_bergman}
The group $G = \mathrm{PSL}(2, \mathbb{R}) = \mathrm{SL}(2, \mathbb{R}) / \{-I, I\}$
is a connected simple Lie group with trivial center \cite{fulton1991representation, varadarajan1984lie},
and acts on the upper half plane \[\mathbb{C}^+ = \{z \in \mathbb{C} : \Im (z) > 0\}\]
through Moebius transforms as
\[
G \times \mathbb{C}^+ \ni \big(
\begin{pmatrix}
a & b \\
c & d
\end{pmatrix},
z
\big)
\mapsto \frac{a z + b}{cz + d} \in \mathbb{C}^+.
\]
The measure $d\mu (z) = (\Im (z))^{-2} dx dy$, where
$z = x+iy$ and
$dxdy$ is the Lebesgue measure on $\mathbb{C}^+$,
is $G$-invariant.
Let $\mathrm{PSO}(2,\mathbb{R}) := \mathrm{SO}(2,\mathbb{R}) / \{-I, I\}$
be the compact subgroup of rotations. We use the diffeomorphism,
\begin{align}\label{eq_diff_i}
G / \mathrm{PSO}(2, \mathbb{R}) &\to \mathbb{C}^+,
\\
[ \newg ] &\mapsto m \cdot i,
\end{align}
to fix a Haar measure on $G / \mathrm{PSO}(2, \mathbb{R})$,
and equip $\mathrm{PSO}(2, \mathbb{R})$ with a normalized Haar measure $\mu_{T}$
of total measure $1$. This fixes the Haar measure $\mu_G$ on $G$
as $d\mu_G \simeq d\mu d\mu_T$. With this normalization,
for measurable $E \subseteq \mathbb{C}^+$,
\begin{align}\label{eq_norm_haar}
\mu_G\left( \{m \in G: m \cdot i \in E \}\right) = \mu (E).
\end{align}
In the remainder of this article, the Haar measure on $G = \mathrm{PSL}(2,\mathbb{R})$
is always assumed to have this normalization.

For $\alpha > 1$,
define the measure
$d\mu_{\alpha} (z) = (\Im (z))^{\alpha -2} dxdy$
and the weighted Bergman space of holomorphic functions
$
A^2_{\alpha} (\mathbb{C}^+) := \mathcal{O} (\mathbb{C}^+) \cap L^2 (\mathbb{C}^+, d\mu_{\alpha})$, equipped
with norm
\begin{align} \label{eq:bergman_norm}
\| f \|^2_{A^2_{\alpha}} =
\int_{\mathbb{C}^+} |f(z)|^2 \; d\mu_{\alpha} (z).
\end{align}

Define $j : \mathrm{SL}(2, \mathbb{R}) \times \mathbb{C}^+ \to \mathbb{C} \setminus \{0\}$ by
\begin{align}
\label{eq_pialpha}
j(\newg, z) = (cz + d)^{-1}, \quad \newg = \begin{pmatrix}
a & b \\
c & d
\end{pmatrix}, z \in \mathbb{C}^+.
\end{align}
Then $j$ satisfies
$
j(\newg_1 \newg_2, z) = j(\newg_1, \newg_2 z) j (\newg_2, z)
$
for all $\newg_1, \newg_2 \in \mathrm{SL}(2, \mathbb{R})$ and $z \in \mathbb{C}^+$.

Let $z^\alpha$ be defined with respect to the principal branch of the argument:
$\arg(z) \in (-\pi, \pi]$. Since $j(\newg, z) \in \mathbb{C}\setminus\mathbb{R}$,
we can form $j(\newg, z)^\alpha$, and
\[j(\newg_1 \newg_2, z)^\alpha = \lambda(\newg_1, \newg_2, \alpha, z)
j(\newg_1, \newg_2 z)^\alpha j (\newg_2, z)^\alpha\] for a unimodular function
$\lambda(\newg_1, \newg_2, \alpha, z)$. The analyticity of $j(\newg, z)$ on $z$, implies that $\lambda(\newg_1, \newg_2, \alpha, z)=\lambda(\newg_1, \newg_2,\alpha)$ is independent of $z$.
A projective unitary representation  $(\pi'_{\alpha}, A^2_{\alpha} (\mathbb{C}^+))$ of $\mathrm{SL}(2, \mathbb{R})$ is
therefore given by
\begin{align} \label{eq:holomorphicdiscreteseries}
(\pi'_{\alpha}(\newg) f )(z) = j(\newg^{-1}, z)^{\alpha} f (\newg^{-1} \cdot z), \quad \newg \in \mathrm{SL}(2, \mathbb{R}), \; z \in \mathbb{C}^+.
\end{align}
Let $\tau: G \to \mathrm{SL}(2, \mathbb{R})$ be a Borel cross-section of the quotient map, i.e., a Borel measurable function that chooses a representative; see \cite[Lemma 1.1.]{mackey1952induced}
or \cite[Chapter 5]{varadarajan1985geometry}. Since $j(-\newg, z)=-j(\newg, z)$,
$\pi_\alpha := \pi'_\alpha \circ \tau$ defines a projective unitary representation of $G$ on
$A^2_{\alpha} (\mathbb{C}^+)$, the so-called \emph{holomorphic discrete series $\sigma$-representation}.
For any $\alpha > 1$, $(\pi_{\alpha}, A^2_{\alpha} (\mathbb{C}^+))$ is irreducible and
square-integrable of formal dimension
\[
d_{\pi_{\alpha}} = \frac{\alpha - 1}{4\pi}.
\]
See \cite{radulescu1998berezin, robert1983introduction} for the details.

Lattices $\Gamma \subseteq G$ are known as \emph{Fuchsian groups}.
By the normalization \eqref{eq_norm_haar}, we have $\vol(G/\Gamma)=\mu(D)$, where $D \subseteq \mathbb{C}^+$ is a so-called \emph{Dirichlet fundamental domain} for $\Gamma$, that provides the tessellation $\mathbb{C}^+ = \bigcup_{\gamma \in \Gamma} \gamma D$, up to sets of null measure.

According to Theorem \ref{th_main},
the existence of a function $g \in A^2_{\alpha} (\mathbb{C}^+)$ such that
$\pi_{\alpha}(\Gamma) g$ is complete in (resp. frame for, resp. Parseval frame for) $A^2_{\alpha} (\mathbb{C}^+)$
is equivalent to the condition
\begin{equation}
\label{eq_compl_sl2r}
\vol(G/\Gamma) \leq \frac{4\pi}{\alpha - 1},
\end{equation}
while the existence of a Riesz sequence $\pi_{\alpha}(\Gamma) g$ (resp. orthonormal sequence, resp. $g$ separating vector) in $A^2_{\alpha} (\mathbb{C}^+)$
is equivalent to the condition
\begin{equation}
\label{eq_sep_sl2r}
\vol(G/\Gamma) \geq \frac{4\pi}{\alpha - 1}.
\end{equation}
For examples of Fuchsian groups, and formulae for their co-volume, see \cite{beardon1983geometry}.
\end{example}

The following example demonstrates that Kleppner's condition (or the ICC condition) cannot be removed as an assumption in Theorem \ref{th_main}.
\begin{example} \label{sec:SL_ICC}

Let $G = \mathrm{SL}(2, \mathbb{R})$, with center $Z(G) = \{-I, I\}$.
For $\alpha > 1$,
the group $G$ acts on the Bergman space $A^2_{\alpha} (\mathbb{C}^+)$
by the representation $\pi'_{\alpha}$ whose action is given by
\eqref{eq:holomorphicdiscreteseries}.
Equip $Z(G)$ with the counting measure and $G / Z(G) = \mathrm{PSL}(2, \mathbb{R})
$ with the Haar measure $\mu_{G/Z}$ normalized as in Example \ref{sec_bergman}.
The Haar measure on $G$ is then fixed by Weil's formula: $d\mu_G \simeq d\mu_{G/Z} d\mu_Z$.
By the orthogonality relations of the holomorphic discrete series of $\mathrm{PSL}(2, \mathbb{R})$, it follows then that, for $f \in A^2_{\alpha} (\mathbb{C}^+)$,
\[
\int_G | \langle f, \pi'_{\alpha}(x) f \rangle|^2 \; d\mu_G (x) = \int_{G/Z(G)} \sum_{ Z(G)}
|\langle f, \pi'_{\alpha} (\dot{x}) f \rangle |^2 \; d\mu_{G/Z} (xZ) = (2 \cdot d^{-1}_{\pi_{\alpha}} )\| f \|_{\Hpi}^4,
\]
where $d_{\pi_{\alpha}} := (\alpha - 1)/(4\pi)$ as in Example \ref{sec_bergman}.
Thus $\pi'_{\alpha}$ is a discrete series representation of $G$ of
formal dimension $d'_{\pi_{\alpha}} = (\alpha - 1)/(8\pi)$.

Let $\Gamma \subseteq G$ be a lattice such that $Z(G) \subset \Gamma$, and
$\Omega \subset G$ a right fundamental domain. Denote by $p : G \to G/Z(G)$
the canonical projection, and $\widetilde{\Gamma} = p(\Gamma)$. As $Z(G) \subset \Gamma$,
$\chi_{\Omega}(x)+\chi_{\Omega}(-x)=\chi_{p(\Omega)}(xZ)$, and, therefore,
Weil's formula gives
\begin{align*}
\vol(G/\Gamma) &= \int_G \chi_{\Omega} (x) \; d\mu_G (x)
= \int_{G / Z(G)} \chi_{p(\Omega)}(xZ) \; d\mu_{G/Z} (xZ)\\
&= \mu_{G/Z} (p(\Omega)) = \vol(\mathrm{PSL}(2, \mathbb{R}) / \widetilde{\Gamma}),
\end{align*}
as $p(\Omega)$ is a fundamental domain for $\widetilde{\Gamma}$ in $\mathrm{PSL}(2, \mathbb{R})$.

Consider the representation $\pi_\alpha$ from Example \ref{sec_bergman}.
Since $\pi'_\alpha(-I)=\pm I$, for any $g \in A^2_{\alpha} (\mathbb{C}^+)$,
\begin{align*}
[\pi'_{\alpha} (\Gamma) g] = [\pi_{\alpha} (\widetilde{\Gamma}) g].
\end{align*}
We conclude that
there exists $g \in A^2_{\alpha} (\mathbb{C}^+)$ such that $\pi'_{\alpha} (\Gamma) g$
is complete  if and only if
\begin{align} \label{completeness_SL}
\vol(G/\Gamma) = \vol(\mathrm{PSL}(2,\mathbb{R}) / \widetilde{\Gamma}) \leq \frac{4\pi}{\alpha - 1}
=  \frac{1}{2} (d'_{\pi_{\alpha}})^{-1},
\end{align}
or, equivalently, $\vol(G/\Gamma) d'_{\pi_{\alpha}} \leq 1/2$.
(This conclusion follows also from Bekka's result \cite[Example 1]{bekka2004square},
where a different normalizations of the Haar measure is used.)

Therefore, the completeness part of Theorem \ref{th_main} fails for $G$ and $\Gamma$. Of course,
$(\Gamma,\sigma)$ does not satisfy Kleppner's condition, as the central element $-I \in G$ has a finite conjugacy class.

Second, note that there does not exist a Riesz sequence in $A^2_{\alpha} (\mathbb{C}^+)$
of the form $\pi'_{\alpha} (\Gamma) g$, regardless of the value of $\vol(G/\Gamma)$, as the (indexed) system
$\pi'_{\alpha} (\Gamma) g$ is always linearly dependent: $\pi'_{\alpha} (I) g = g = \pm \pi'_{\alpha} (-I)g$.
Hence, also in that respect, the conclusion of Theorem \ref{th_main} fails for $G$ and $\Gamma$.
\end{example}

\subsubsection{Perelomov's uniqueness problem} \label{sec:perelomov_uniqueness}
A set of points $\Lambda \subseteq \mathbb{C}^+$ is called a \emph{set of uniqueness} for
the Bergman space $A^2_{\alpha} (\mathbb{C}^+)$ if the only function $f \in A^2_{\alpha} (\mathbb{C}^+)$
that vanishes identically on $\Lambda$ is the zero function.
Perelomov \cite{perelomov1973coherent} studied this question when $\Lambda$ is the orbit of a point $w \in \mathbb{C}^+$ through a Fuchsian group $\Gamma$
in $G = \mathrm{PSL}(2,\mathbb{R})$.\footnote{Perelomov formulates his results on the unit disk.}
 The link with lattice orbits of $\pi_{\alpha}$ is provided by the special choice of vector
$\repkw (z)=2^{\alpha-2}\pi^{-1}(\alpha-1)i^\alpha (z-\overline{w})^{-\alpha}$,
which has the \emph{reproducing property}:
\begin{align}\label{eq_rep_for}
f(\newg \cdot w) = c_\alpha (cw+d)^{-\alpha} \langle f, \pi_{\alpha}(\newg) \repkw\rangle_{A^2_\alpha}, \qquad f \in A^2_{\alpha} (\mathbb{C}^+), \newg \in \mathrm{PSL}(2, \mathbb{R}),
\end{align}
where $c_\alpha \in \mathbb{T}$ is a unimodular constant and the notation of \eqref{eq_pialpha} is used.
Hence, $\Lambda=\Gamma w$ is a set of uniqueness for $A^2_{\alpha} (\mathbb{C}^+)$ if and only if
$\pi_{\alpha} (\Gamma) \repkw$ is complete in $A^2_{\alpha} (\mathbb{C}^+)$. Perelomov \cite{perelomov1973coherent} showed that this is the case if
\begin{align}
\label{eq_perelomov}
\#F_w \vol(G/\Gamma) < \frac{4\pi}{\alpha - 1},
\end{align}
where
$
F_w = \left\{\gamma \in \Gamma : \gamma \cdot w = w \right\}
$
is the stabilizer subgroup of $w$.\footnote{In \cite[Theorems 3 and 4]{perelomov1973coherent} Perelomov implicitly assumes that $\#F_w=1$, the general case follows after some minor adaptations, as explained in \cite[Theorem 5.1]{kelly}. The case $\Gamma=\mathrm{PSL}(2,\mathbb{Z})$ is proved independently in \cite{klauder1994wavelets}, after observing that the physically-motivated restrictions the authors impose on $\alpha$ play no role in the argument.}

When $\#F_w=1$, the sufficient condition for the completeness of $\pi_{\alpha}(\Gamma) \repkw$ in $A^2_{\alpha} (\mathbb{C}^+)$ \eqref{eq_perelomov} almost matches \eqref{eq_compl_sl2r}, which is necessary for the completeness of \emph{any} orbit $\pi_{\alpha}(\Gamma) g$. For $\#F_w>1$, a necessary condition for the completeness of $\pi_{\alpha}(\Gamma) \repkw$ in $A^2_{\alpha} (\mathbb{C}^+)$ almost matching \eqref{eq_perelomov} was proved by Kelly-Lyth \cite[Theorem 5.4]{kelly}: if $\Lambda$ is a uniqueness set for $A^2_{\alpha} (\mathbb{C}^+)$, then
\begin{align}
\label{eq_kelly_lyth}
\#F_w \vol(G/\Gamma) \leq \frac{4\pi}{\alpha - 1}.
\end{align}
Thus, while $(\pi_{\alpha}, A^2_{\alpha} (\mathbb{C}^+))$ admits a cyclic vector $g$ if and only if
$\vol(G/\Gamma) \leq \frac{4\pi}{\alpha - 1}$, in the smaller range
$\vol(G/\Gamma) < \frac{4\pi}{\#F_w(\alpha - 1)}$ the specific choice $g=\repkw$ is possible,
and in the range $\frac{4\pi}{\#F_w(\alpha - 1)} < \vol(G/\Gamma) \leq \frac{4\pi}{\alpha - 1}$
it is not. The completeness of $\pi_{\alpha} (\Gamma)\repkw$ when
$\#F_w \vol(G/\Gamma) = \frac{4\pi}{\alpha - 1}$ has recently been shown by Jones \cite{jones2020bergman}.

Perelomov's original work also contains a necessary condition for the completeness of $\pi_{\alpha}(\Gamma) \repkw$ in $A^2_{\alpha} (\mathbb{C}^+)$, formulated in terms of the smallest weight $m_0^+$
for which the space of parabolic $\Gamma$-modular forms on $\mathbb{C}^+$ is at least two-dimensional \cite[Theorem 3]{perelomov1973coherent}:
if $\Lambda$ is a uniqueness set for $A^2_{\alpha} (\mathbb{C}^+)$, then
\begin{align}
\label{eq_nec_per}
\frac{2 \pi}{m_0^+} \leq \frac{4\pi}{\alpha - 1}.
\end{align}
As shown in \cite[Lemma 5.3]{kelly},
\begin{align*}
\frac{2 \pi}{m_0^+} \leq \frac{\vol(G/\Gamma)}{1+ \#P} \leq \vol(G/\Gamma),
\end{align*}
where $\#P$ denotes the number of inequivalent cusps for $\Gamma$.
Thus the necessity of \eqref{eq_compl_sl2r} for cyclicity is stronger than Perelomov's automorphic weight bound for the cyclicity of one specific vector \eqref{eq_nec_per}, but weaker than
Kelly-Lyth's \eqref{eq_kelly_lyth}. Under the assumption that \eqref{eq_nec_per} fails, Perelomov uses certain $\Gamma$-modular forms to construct a non-zero function in $A^2_{\alpha} (\mathbb{C}^+)$ that vanishes on $\Gamma w$. Under the assumption that \eqref{eq_kelly_lyth} fails, Kelly-Lyth also provides such function, by calculating the so-called upper Beurling-Seip density of $\Gamma w$ in terms of the co-volume of $\Gamma$, and
by resorting to Seip's interpolation theorem \cite{seip1993beurling}.
While this article gives a very elementary argument for the necessity of \eqref{eq_compl_sl2r} for the completeness
of $\pi_\alpha(\Gamma)g_w$, we do not
have a similarly simple argument for \eqref{eq_kelly_lyth}.

\subsubsection{Frames and Riesz sequences of reproducing kernels}
\label{sec:frame_kernel}
By Theorem \ref{th_semi}, under \eqref{eq_compl_sl2r}, there exists $g \in A^2_{\alpha} (\mathbb{C}^+)$ such that the orbit $\pi_{\alpha}(\Gamma) g$ is a (Parseval) frame for $A^2_{\alpha} (\mathbb{C}^+)$. In light of Section \ref{sec:perelomov_uniqueness}, it is natural to ask whether the specific choice $g=\repkw$ also provides a frame. Here the answer depends on whether or not $\Gamma$ is co-compact (that is, $G/\Gamma$ is compact). Using \eqref{eq_rep_for}, the frame property reads
\begin{align}\label{eq_samp_1}
A\norm{f}^2_{A^2_{\alpha}}
\leq
\Im(w)^{-\alpha}
\sum_{\gamma \in \Gamma}
\Im(\gamma \cdot w)^\alpha
\abs{f(\gamma \cdot w)}^2
\leq
B \norm{f}^2_{A^2_{\alpha}},
\qquad \qquad f \in A^2_{\alpha} (\mathbb{C}^+),
\end{align}
for some constants $A,B>0$. The stabilizer subgroup $F_w$ is finite because it is simultaneously contained in the discrete set $\Gamma$ and in the compact subgroup $\newg_0 \mathrm{PSO}(2,\mathbb{R}) \newg_0^{-1}$, where $\newg_0 i = w$. Hence, we can rewrite \eqref{eq_samp_1} as a \emph{sampling inequality}:
\begin{align}\label{eq_samp_2}
A'\norm{f}^2_{A^2_{\alpha}}
\leq
\Im(w)^{-\alpha} \# F_w
\sum_{z \in \Gamma w}
\Im(z)^\alpha
\abs{f(z)}^2
\leq
B' \norm{f}^2_{A^2_{\alpha}},
\qquad \qquad f \in A^2_{\alpha} (\mathbb{C}^+).
\end{align}
Based on the characterization of sampling inequalities by Seip \cite{seip1993beurling}, Kelly-Lyth showed that if $\Gamma$ is not co-compact, then $\Gamma w$ never satisfies \eqref{eq_samp_2}, because its so-called lower Beurling-Seip density is zero \cite[p.44]{kelly}. Thus, in this case, $\pi_{\alpha} (\Gamma) \repkw$ fails to be a frame for $A^2_{\alpha} (\mathbb{C}^+)$. On the other hand, if $\Gamma$ is co-compact, the lower Beurling-Seip density of $\Gamma w$ can be computed in term of the co-volume of $\Gamma$ and yields that $\pi_{\alpha} (\Gamma) \repkw$ is a frame for $A^2_{\alpha} (\mathbb{C}^+)$ if and only if \eqref{eq_perelomov} holds, see \cite[p.44]{kelly}.

Similarly, under \eqref{eq_sep_sl2r}, Theorem \ref{th_semi} provides $g \in A^2_{\alpha} (\mathbb{C}^+)$ such that $\pi_{\alpha} (\Gamma) g$ forms a Riesz sequence in $A^2_{\alpha} (\mathbb{C}^+)$, and one may wonder if, under the corresponding strict inequality,
the particular choice $g=\repkw$ is also possible. This is indeed the case if the stabilizer subgroup $F_w$ is trivial: as shown by Kelly-Lyth \cite[Theorem 5.8]{kelly} by invoking Seip's interpolation theorem \cite{seip1993beurling}, the system $\pi_{\alpha} (\Gamma) \repkw$ is a Riesz sequence\footnote{In \cite[Theorem 5.8]{kelly}, it is shown that the orbit $ \Gamma w$ is an \emph{interpolation set} for $A^2_{\alpha} (\mathbb{C}^+)$ if and only if $\vol(G/\Gamma) > \frac{4\pi}{\alpha - 1}$.
It is a standard fact that $\Gamma w$ is an interpolation set if and only if $\pi_{\alpha} (\Gamma) k^{(\alpha)}_w$ is a Riesz sequence; see for example \cite[Section 2.5]{seip2004interpolation} or \cite[Section 3.1]{seip2004interpolation}.} if and only if
\[
\vol(G/\Gamma) > \frac{4\pi}{\alpha - 1}.
\]
If the stabilizer subgroup $F_w$ is non-trivial, then $\pi_{\alpha}(\Gamma) \repkw$ is not a Riesz sequence, because it is linearly dependent (as an indexed set). Indeed, \eqref{eq_rep_for} shows that $\pi_{\alpha}(\gamma) \repkw$ is a multiple of $\repkw$ when $\gamma \in F_w$. To make the problem meaningful, we can eliminate repetitions by considering the reduced orbit
\begin{align*}
\tilde\pi_{\alpha} (\Gamma)\repkw=\left\{\pi_{\alpha} (\gamma)\repkw: \gamma \in \Gamma_w \right\},
\end{align*}
where $\Gamma_w$ is a set of representatives of $\Gamma/F_w$. With this correction, \cite[Theorem 5.8]{kelly} implies that if $\Gamma \subset \mathrm{PSL}(2, \mathbb{R})$
is a Fuchsian group satisfying
\[
\#F_w \vol(G/\Gamma) > \frac{4\pi}{\alpha - 1},
\]
then $\tilde\pi_{\alpha} (\Gamma) \repkw$ is a Riesz sequence in $A^2_{\alpha} (\mathbb{C}^+)$.
Thus, in contrast to the frame property, a Riesz sequence can exist even for lattices that are not co-compact.

\subsubsection{Perelomov's problem with respect to other special vectors}
The functions
\[
\specv (z) = \Big(\frac{z-i}{z+i} \Big)^n (z+i)^{-\alpha},
\qquad n \in \mathbb{N}_0,
\]
form a distinguished orthogonal basis for $A^2_{\alpha} (\mathbb{C}^+)$. Note that $h_0^{(\alpha)}$ is a multiple of the reproducing kernel $k^{(\alpha)}_i \in A^2_{\alpha} (\mathbb{C}^+)$ at $i$ discussed in Section \ref{sec:perelomov_uniqueness}.

In the language of Perelomov \cite{perelomov1972coherent, perelomov1973coherent}, each $\specv$ is a \emph{stationary vector} of the subgroup of rotations $\mathrm{PSO}(2, \mathbb{R})$ in $G = \mathrm{PSL}(2, \mathbb{R})$: for each $r \in \mathrm{PSO}(2, \mathbb{R})$, there exists $\phi_r \in \mathbb{R}$
such that:
\begin{align*}
\pi_{\alpha} (r) \specv = e^{i \phi_r} \specv,
\end{align*}
as a direct calculation shows. Because of stationarity,
given a Fuchsian group $\Gamma \subset G$, the orbit $\pi_{\alpha}(\Gamma) \specv$ can be reduced by selecting for each $\gamma \in \Gamma$ just one representative
modulo $\mathrm{PSO}(2, \mathbb{R})$, the specific choice being immaterial.
The resulting set is a \emph{subsystem of coherent states} in the sense of Perelomov\cite{perelomov1972coherent, perelomov1973coherent}, and it is complete in $A^2_{\alpha} (\mathbb{C}^+)$ if and only if the orbit $\pi_\alpha(\Gamma) \specv$ is.

The coherent state subsystems associated with $\specv$ can be more concretely described as follows \cite{bertrand2002characterization, klauder1994wavelets}.
The subgroup of affine transformations
\begin{align} \label{eq:P}
P := \bigg \{
\newg_{x,y} =
\begin{pmatrix}
\sqrt{y} & x / \sqrt{y} \\
0 & 1/ \sqrt{y}
\end{pmatrix}
:
(x, y) \in \mathbb{R} \times \mathbb{R}^+ \bigg\} \subset \mathrm{PSL}(2,\mathbb{R})
\end{align}
provides representatives for the quotient $G / \mathrm{PSO}(2, \mathbb{R})$, since $G = P \cdot \mathrm{PSO}(2, \mathbb{R})$
and $P \cap \mathrm{PSO}(2, \mathbb{R}) = \{I\}$. In particular, every $\newg \in G$ can be written as
$\newg = \newg_{x,y} r$ for unique $\newg_{x,y} \in P$ and $r \in \mathrm{PSO}(2, \mathbb{R})$.
Recall that $i \in \mathbb{C}^+$ is a fixed point of $\mathrm{PSO}(2, \mathbb{R})$, and, hence,
 $(x,y)$ is $x+iy = \newg_{x,y} \cdot i =\newg \cdot i$. Therefore, the coherent state associated with $\specv$
can be realized as an \emph{affine system}:
\begin{align}\label{eq_per}
\per_\alpha(\specv,\Gamma i) = \left\{ \pi_\alpha(\newg_{x,y}) \specv : x+iy \in \Gamma i \right\}
=
\left\{ y^{-\alpha/2} \specv \big(\tfrac{\cdot-x}{y}\big) : x+iy \in \Gamma i \right\}.
\end{align}
Perelomov's problem concerns the completeness of $\per_\alpha(\specv,\Gamma i)$ in
$A^2_{\alpha} (\mathbb{C}^+)$. While Theorem \ref{th_semi} shows that \eqref{eq_compl_sl2r} is necessary for completeness, we are unaware of literature on corresponding sufficient conditions.

We remark that, as $G$ acts transitively on $\mathbb{C}^+$, the previous conclusions also apply to any other base point $z \in \mathbb{C}^+$ in lieu of $i$. Indeed, if $z=\newg \cdot i$ with $\newg \in G$, then each element of
\begin{align}\label{eq_lala}
\per_\alpha(\specv,\Gamma z) =
\left\{ y^{-\alpha/2} \specv \big(\tfrac{\cdot-x}{y}\big) : x+iy \in \Gamma z \right\}
\end{align}
is a unimodular multiple of an element of
$\pi_\alpha\big(\newg) \per_\alpha\big(\specv,(\newg^{-1}\Gamma \newg) \cdot i\big)$ and vice versa.
Thus, one system is complete if and only if the other is, while
$\vol\big(G / (\newg^{-1}\Gamma \newg)\big)=\vol(G/\Gamma)$. In conclusion, Theorem \ref{th_semi} gives the following:
\begin{align}
\mbox{If the affine system }\eqref{eq_lala}\mbox{ is complete in }
A^2_{\alpha} (\mathbb{C}^+)\mbox{ then } \vol(G/\Gamma) \leq \frac{4\pi}{\alpha - 1}.
\end{align}
The completeness problem can be alternatively reformulated on the real half-line. The connection is provided by the \emph{Paley-Wiener theorem for Bergman spaces} \cite{duren2007paley, sally1967analytic}: the Fourier-Laplace transform
\begin{align*}
\mathcal{F}f (z) = \int_0^\infty f(t) e^{i z t} \, dt, \qquad z \in \mathbb{C}^+,
\end{align*}
is a multiple of an isometric isomorphism between the weighted $L^2$-space
\begin{align*}
L^2(\mathbb{R}^+, t^{-(\alpha-1)}\,dt) =
\left\{ f:\mathbb{R}^+ \to \mathbb{C} \mbox{ measurable} :
\int_{\mathbb{R}^+} |f(t)|^2 t^{-(\alpha-1)} \, dt< \infty \right\}
\end{align*}
and the Bergman space $A^2_{\alpha} (\mathbb{C}^+)$. In addition,
the special vectors $\specv$ correspond under the isomorphism to multiples of
\begin{align} \label{eq:Fourier}
H_n^{(\alpha)} (t) := t^{\alpha - 1} e^{-t} L^{(\alpha - 1)}_n (2t), \qquad t >0,
\end{align}
where $L_n^{\alpha-1}$ is the Laguerre polynomial of degree $n \in \mathbb{N}$ and index $\alpha-1$;
see \cite{duren2007paley}.
The inverse Fourier-Laplace transform thus maps the affine system \eqref{eq_per} into the system
\begin{align}\label{eq_abc}
\mathcal{F}^{-1} \per_\alpha(\specv,\Gamma i)
=\left\{
d^\alpha_n \,y^{-\alpha/2+1} e^{-i \pi x \,\cdot} H^{(\alpha)}_n (y \cdot) \; : \; x+iy \in \Gamma i
\right\},
\end{align}
in $L^2 (\mathbb{R}^+, t^{-(\alpha -1)} dt)$ for a suitable $d^\alpha_n \in \mathbb{C}$. This yields another equivalent formulation of Perelomov's completeness problem.
See also \cite[Section 8.6]{combescure2012coherent}.

With a certain physical motivation, part of Perelomov's work
\cite{perelomov1973coherent} has been adapted to the special vectors $H_n^{(\alpha)}$ by Abreu, Balazs, de Gosson and Mouayn \cite{ABDM}.
Conditionally to the existence of modular forms having certain special properties, and under certain restrictions on $\alpha > 1$, \cite[Corollary 1]{ABDM} asserts that if \eqref{eq_abc} is complete in $L^2(\mathbb{R}^+, t^{-(\alpha-1)}\,dt)$, then
\begin{align}
\label{eq_abdm}
\vol(G/\Gamma) \leq (n+1) \frac{8\pi}{\alpha-1}.
\end{align}
On the other hand, Theorem \ref{th_semi} provides the sharper bound
\begin{align}\label{eq_vvvv}
\vol(G/\Gamma) \leq \frac{4\pi}{\alpha-1},
\end{align}
which is valid without assumptions on the existence of adequate modular forms,
and for all $\alpha>1$.
(Indeed, if
\eqref{eq_abc} is complete in $L^2(\mathbb{R}^+, t^{-(\alpha-1)}\,dt)$
then $[\pi_{\alpha}(\Gamma) \specv]=
[\per_\alpha(\specv,\Gamma)]=A^2_{\alpha} (\mathbb{C}^+)$,
and Theorem \ref{th_semi} gives \eqref{eq_vvvv}.)
\footnote{
The bound stated in \cite[Corollary 1]{ABDM} is
\eqref{eq_abdm} with $\alpha$ instead of $\alpha-1$. We understand this as a  miscalculation
caused by inconsistent normalization of the Bergman space
on \cite[page 352]{ABDM}. The result in \cite{ABDM}
is (equivalently) formulated in terms of the completeness of the system
of functions
$(yt)^{-\alpha/2+1} e^{\pi x i \,t/2} H^{(\alpha)}_n (yt/2)$
within $L^2(\mathbb{R}^+, t^{-1}\,dt)$.
}

\subsection{Heisenberg projective representation and Gabor systems}
\label{sec:gabor}

Let $G = \mathbb{R}^{2d}$.
Define the projective representation $(\pi, L^2 (\mathbb{R}^d))$ through the action
\begin{align} \label{eq:timefreq}
 \pi (z) f (t) = e^{2\pi i y \cdot t} f(t - x), \quad z = (x,\xi) \in \mathbb{R}^{2d}, \; t \in \mathbb{R}^d.
 \end{align}
Then
$
\pi(z + z') = e^{2\pi i \xi' \cdot x } \pi(z) \pi(z')$ for $z = (x, \xi) \in \mathbb{R}^{2d}$ and $z' = (x', \xi') \in \mathbb{R}^{2d}$. Thus the cocycle of $(\pi, L^2 (\mathbb{R}^d))$ is $\sigma (z, z') = e^{2\pi i \xi' \cdot x } \in \mathbb{T}$. Moreover, $\pi$ is is irreducible and square-integrable of formal dimension $d_{\pi} = 1$.
For background, and the appearance of the Heisenberg group, see \cite{folland1989harmonic, groechenig2001foundations}.

Systems of the form $\pi(\Gamma) g$, with $g \in L^2 (\mathbb{R}^d)$
and $\Gamma \subset \mathbb{R}^{2d}$ a lattice, are known as \emph{Gabor systems}
or \emph{Weyl-Heisenberg systems}, and are important in several branches of pure and applied mathematics.
Gabor systems are sometimes also called \emph{canonical} coherent state subsystems in mathematical physics.
The literature on Gabor systems focuses mainly on frames, Riesz sequences, and completeness. Kleppner's condition for a lattice $\Gamma \subseteq \mathbb{R}^{2d}$ and the cocycle $\sigma$ reads: for all $\gamma \in \Gamma \setminus \{0\}$ there exists $\gamma' \in \Gamma$ such that
\[
\sigma(\gamma, \gamma') \overline{\sigma(\gamma, \gamma')} = e^{2\pi i (\gamma_2' \cdot \gamma_1 - \gamma_2 \cdot \gamma_1')} \neq 1.
\]
While for separable lattices $\Gamma = \alpha \mathbb{Z}^d \times \beta \mathbb{Z}^d$, with $\alpha,\beta \in \mathbb{R}$,
Kleppner's condition reduces to $\alpha \beta \not\in \mathbb{Q}$, an explicit characterization of Kleppner's condition for more general lattices is subtle, e.g., see \cite{omland2014primeness,packer1989twisted,han2017note}. Provided that $(\Gamma, \sigma)$
satisfies Kleppner's condition, Theorem \ref{th_main} shows that $\pi(\Gamma)$ admits a frame vector if and only if it admits a complete vector, if and only if
\begin{align} \label{eq:density_gabor}
\vol(G/\Gamma) \leq 1;
\end{align}
while the condition for the existence of a Riesz vector is
\begin{align} \label{eq:density_gabor_2}
\vol(G/\Gamma) \geq 1.
\end{align}
In fact, Theorem \ref{thm:necessary_density} shows that the necessity of the density conditions for completeness, frames, and Riesz sequences holds without assuming Kleppner's condition. Direct proofs of this necessity
go back to Baggett \cite{baggett1990processing}, Daubechies, Landau and Landau \cite{daubechies1995gabor},
and Ramanathan and Steger \cite{ramanathan1995incompletness}, and are also implicitly contained in Rieffel's work \cite{rieffel1981von, rieffel1988projective}.

Our proof of Theorem \ref{thm:necessary_density}
is partially inspired by Janssen's ``classroom proof'' \cite{janssen_density}, which concerns frames and Riesz sequences. Instead of using the frame inequality, as in Proposition \ref{prop:density_quasi},
Janssen uses the so-called \emph{canonical frame expansion} \[f=\sum_{\gamma \in \Gamma} \langle f, \pi(\gamma) S_{g,\Gamma}^{-1} g \rangle \pi(\gamma) g\] associated with a frame $\pi(\Gamma)g$ and frame operator $S_{g,\Gamma}$. The coefficients $\langle f, \pi(\gamma) S_{g,\Gamma}^{-1} g \rangle$ have minimal $\ell^2$ norm among all sequences $c$ such that \[f = \sum_{\gamma \in \Gamma} c_\gamma \pi(\gamma) g\]and this property is leveraged to prove \eqref{eq:density_gabor}. In contrast, we prove Theorem \ref{thm:necessary_density} by resorting to the normalization procedure in Proposition \ref{prop:cyclic_tight-frame}, which applies also to complete systems $\pi(\Gamma)g$ that may not be frames. Similarly, while Janssen treats Riesz sequences $\pi(\Gamma) g$ by invoking properties of the corresponding biorthogonal element $h$ characterized by \[\langle \pi(\gamma) g, \pi(\gamma') h\rangle = \delta_{\gamma', \gamma}, \mbox{ for }\gamma, \gamma' \in \Gamma,\]
and $h \in [\pi(\Gamma)g]$,
we use Proposition \ref{prop:separating_riesz} to reduce the proof to orthonormal sequences, while also treating separating vectors.

As is the case with the necessity of the density conditions, the sufficiency of \eqref{eq:density_gabor} and \eqref{eq:density_gabor_2} for the existence of frames and Riesz vectors also holds without assuming Kleppner's condition. This deep fact, shown by Rieffel \cite{rieffel1981von, rieffel1988projective}, and also a consequence of Bekka's work \cite[Theorem 4]{bekka2004square},
lies beyond the elementary approach presented in this article. Indeed, Rieffel's and Bekka's work require considering not only the operator algebras $\pi(\Gamma)'$ and $\pi(\Gamma)''$, but also certain so-called induced algebras, and in this way fully exploit the coupling theory of von Neumann algebras. We hope that our elementary introduction motivates the reader to delve deeper into operator-algebraic methods. For lattices of the form $\Gamma = A \mathbb{Z}^d \times B \mathbb{Z}^d$,
with $A, B \in \mathrm{GL}(d, \mathbb{R})$, Han and Wang gave a constructive proof of the sufficiency of \eqref{eq:density_gabor} for the existence of frame vectors \cite{han2001lattice}.

\subsubsection{Gaussians and Bargmann-Fock spaces}
The question of choosing specific cyclic or frame vectors has been intensively studied for $d=1$ and lattices in $\mathbb{R}^2$
of the form $\Gamma = \alpha \mathbb{Z} \times \beta \mathbb{Z}$. In his work on foundations of quantum mechanics,
von Neumann \cite{neumann1968mathematische} claimed without proof that
the Gabor system $\pi(\Gamma)g$ generated by the Gaussian function
\begin{align}\label{eq_gauss}
g(t) = 2^{-1/4} e^{-\pi |t|^2}, \qquad t \in \mathbb{R},
\end{align}
is complete in $L^2(\mathbb{R})$ if and only if \eqref{eq:density_gabor} holds.
Proofs of the claim were given by Perelomov \cite{perelomov1971remark}, Bargmann \cite{bargmann1971on}, and Neretin
\cite{neretin2006perelomov}. For rational lattices (i.e., $\alpha \beta \in \mathbb{Q}$), the same claim holds when the Gaussian function is multiplied by a rational function with no real poles \cite{groechenig2016completeness}.

The related question, under which conditions the Gabor system generated by the Gaussian \eqref{eq_gauss} is a frame for $L^2 (\mathbb{R})$ or a Riesz sequence was first considered by Daubechies and Grossmann \cite{daubechies1988frames}, and fully answered independently by Lyubarski\u{\i} \cite{lyubarski1992frames},
and Seip and Wallst\'{e}n \cite{seip1992density, seip1992density2}:
\begin{align*}
\vol(G/\Gamma) < 1,
\end{align*}
is necessary and sufficient for the frame property, while
\begin{align*}
\vol(G/\Gamma) > 1,
\end{align*}
is necessary and sufficient for the Riesz property.

The proofs of Lyubarski\u{\i} \cite{lyubarski1992frames} and Seip-Wallst\'{e}n \cite{seip1992density} work with a $\sigma$-representation
unitarily equivalent  to $(\pi, L^2 (\mathbb{R}))$
on the Bargmann-Fock space $\mathcal{F}^2(\mathbb{C})$ of entire
functions $F:\mathbb{C} \to \mathbb{C}$ having finite norm
\[
\| F \|^2_{\mathcal{F}^2} = \int_{\mathbb{C}} | F(z) |^2 e^{- \pi |z|^2} \; dxdy.
\]
As in Example \ref{sec_bergman}, the distinguished vector $g$ corresponds under the new representation to the reproducing kernel, that is, the vector representing the evaluation functional $F \mapsto F(0)$. A simple proof of the density results was derived by Janssen \cite{janssen1994signal}.

The characterization of the frame and Riesz property for other vectors $g$ is a topic of intense study
\cite{groechenig2014mystery}.

\end{document}